\documentclass[10pt,journal,twocolumn,twoside]{autart}

\usepackage{amsmath} 
\usepackage{amssymb}  
\usepackage{verbatim}  
\usepackage{enumitem}
\usepackage{url}

\newtheorem{objective}{\bf Objective}
\newtheorem{assumption}{\bf Assumption}

\newtheorem{theorem}{\bf Theorem}
\newtheorem{proposition}{\bf Proposition}
\newtheorem{lemma}{\bf Lemma}
\newtheorem{corollary}{\bf Corollary}
\newtheorem{remark}{\bf Remark}

\usepackage{setspace}

\usepackage{tikz}
 \usetikzlibrary{arrows,patterns,mindmap,backgrounds,shadows}
\usetikzlibrary{backgrounds}
\usetikzlibrary{shapes,arrows,chains}
\usepackage{tcolorbox}

\newcommand{\gu}{g_{\mathrm{u}}}
\newcommand{\gy}{g_{\mathrm{y}}}
\newcommand{\gyx}{g_{\mathrm{y,x}}}
\newcommand{\gyu}{g_{\mathrm{y,u}}}
\newcommand{\Crho}{C_\rho}
\newcommand{\Cell}{C_\ell}
\newcommand{\JN}{\mathcal{J}_N}
\newcommand{\JNkappa}{\mathcal{J}_{N,\kappa}}
\newcommand{\Vf}{V_{\mathrm{f}}}
\newcommand{\Vfkappa}{V_{\mathrm{f},\kappa}}
\newcommand{\fw}{f}

\newcommand{\nx}{n_{\mathrm{x}}}
\renewcommand{\nu}{n_{\mathrm{u}}}
\newcommand{\nw}{n_{\mathrm{w}}}
\newcommand{\ny}{n_{\mathrm{y}}}
\newcommand{\ntheta}{n_{\theta}}
\newcommand{\Vtheta}{V_{\theta}}
\newcommand{\Time}{K}
 
\newbool{arxiv}
\setbool{arxiv}{true}
\def\change{}

\begin{document}
\begin{frontmatter}

\title{Certainty-equivalent adaptive MPC\\ for uncertain nonlinear systems\vspace{-2mm}
}
\author{Johannes Kohler}
\ead{j.kohler@imperial.ac.uk}  
\address{Department of Mechanical Engineering, Imperial College London, London, UK} 
\thanks{This research was primarily carried out while the author was at the Institute for Dynamic Systems and Control, ETH Zurich, Switzerland.}
\begin{abstract}
We provide a method to design adaptive controllers for nonlinear systems using model predictive control (MPC). 
By combining a certainty-equivalent MPC formulation with least-mean-square parameter adaptation, we obtain an adaptive controller with strong robust performance guarantees: The cumulative tracking error and violation of state constraints scale linearly with noise energy, disturbance energy, and path length of parameter variation. 
A key technical contribution is developing the underlying certainty-equivalent MPC that tracks output references, accounts for actuator limitations and desired state constraints, requires no system-specific offline design, and provides strong inherent robustness properties. 
This is achieved by leveraging finite-horizon rollouts, artificial references, recent analysis techniques for optimization-based controllers, and \change{relaxed soft state constraints}. 
For open-loop stable systems, we derive a semi-global result that applies to arbitrarily large measurement noise, disturbances, and parametric uncertainty. 
For stabilizable systems, we derive a regional result that is valid within a given region of attraction and for sufficiently small uncertainty. 
Applicability and benefits are demonstrated with numerical simulations involving systems with large parametric uncertainty: a linear stable chain of mass-spring-dampers and a nonlinear unstable quadrotor navigating obstacles.  
\end{abstract}
\end{frontmatter}
\section{Introduction} 
Controllers should ensure key requirements such as stability, performance, and constraint satisfaction. 
Achieving these objectives becomes particularly challenging when the model is nonlinear and uncertain. 
The treatment of model uncertainty is divided into \textit{robust} control methods~\cite{khalil1996robust} and \textit{adaptive} methods~\cite{goodwin2014adaptive}.  
In this paper, we study adaptive control methods due to their ability to accommodate large model uncertainty. 
 
Boundedness and convergence in adaptive control have been studied for multiple decades and corresponding design methods for direct or indirect adaptive control are well established~\cite{goodwin2014adaptive}.  
Furthermore, robustness issues in adaptive control due to noise or unmodelled dynamics~\cite{rohrs1982robustness} have been addressed in the literature~\cite{middleton1988design}.  
However, most adaptive control designs are limited to specific system classes - for example, linear dynamics, noise-free continuous-time measurements, or matched uncertainty; and face significant challenges when accounting for additional input or state constraints~\cite{anderson2008challenges,tao2014multivariable,annaswamy2021historical}. 
In this paper, we propose a simple-to-implement adaptive model predictive control (MPC) scheme that provides high-performance tracking for nonlinear uncertain systems while accounting for input and state constraints.  

\textit{Adaptive control for nonlinear systems:} 
One of the practical challenges in adaptive control for \textit{nonlinear} systems is the design of a feedback and Lyapunov function that ensures stability for all possible parameter values.
If a corresponding parametrized CLF is given, existing adaptive control methods can be applied to linearly parametrized uncertain nonlinear systems~\cite{pomet1992adaptive,krstic1995control,lopez2021universal}. 
Corresponding  CLFs can be constructed for specific system classes, e.g., using feedback linearization~\cite{campion1990indirect}, sliding-mode/boundary-layer control~\cite{slotine1986adaptive}, or backstepping~\cite{krstic1992adaptive}.
Nonlinear adaptive control is also possible using the immersion and invariance approach~\cite{astolfi2003immersion}, which requires the construction of model-specific functions offline. 
Other recent results for nonlinear adaptive control utilize control contraction metrics~\cite{lopez2021universal} or assume that the value function from infinite-horizon optimal control is analytically given~\cite{lopez2021adaptive}.  
Overall, application of \textit{nonlinear} adaptive control is limited due to the explicit offline design of suitable parametrized feedback and Lyapunov functions (cf.~\cite[Sec.~5.3]{tao2014multivariable}).  

\textit{Adaptive control under constraints:}
Adaptive control under additional \textit{input constraints}  needs to either assume open-loop stable systems or restrict attention to small parametric uncertainty and local initial states to ensure boundedness~\cite{annaswamy1997adaptive}.  
Furthermore, convergence results require additional modifications:  
In~\cite{chaoui1998adaptive,chaoui2001adaptive}, convergence for (asymptotically) constant references is achieved by using a specific pole placement formula that depends on the input saturation. 
Alternatively, in~\cite{zhong2005globally}, sufficient conditions for convergence are derived assuming bounds on the maximal reference magnitude and the parametric error. 
In~\cite{wen2011robust}, the input saturation is treated as a bounded nonlinearity and convergence is achieved using backstepping.  
Results for state constraints in adaptive control typically require the design of a suitable barrier function~\cite{liu2016barrier,taylor2020adaptive}, whose construction is, in general, even more challenging than that of a CLF. 
Hence, while there exist adaptive control methods to account for input saturation or even state constraints, the parametric error must be sufficiently small and additional modifications are required that may significantly degrade performance. 
Overall, the consideration of more general nonlinear systems, the incorporation of constraints, and transient performance beyond boundedness remain limiting factors for the application of adaptive control~\cite{annaswamy2021historical}. 

\textit{Adaptive  MPC}:  
In contrast to classical adaptive control methods,  {MPC} does not rely on an offline-parametrized analytical feedback law. 
Instead, feedback is generated by repeatedly solving finite-horizon open-loop optimal control problems online~\cite{rawlings2017model,grune2017nonlinear}. 
MPC can directly consider general nonlinear systems, incorporate both input and state constraints, and achieve close-to-optimal performance~\cite{rawlings2017model,grune2017nonlinear}.
The combination of MPC with online parameter adaptation has also been explored early in the literature, empirically demonstrating high performance~\cite{yoon1994adaptive}. 
However, the lack of a theoretical foundation in early MPC formulations has led many researchers to instead focus on classical (unconstrained) adaptive control~\cite{bitmead1990adaptive}. 
As a result, the theoretical development of adaptive MPC methods has historically received limited attention~\cite{mayne2014model}. 
\ifbool{arxiv}{In recent years, there have been significant contributions towards adaptive MPC, ranging from finite-impulse response (FIR) models~\cite{tanaskovic2014adaptive}, to general linear systems~\cite{lorenzen2019robust,strelnikova2024adaptive,peschke2023RAMPC_track}, to different classes of nonlinear models~\cite{gonccalves2016robust,lopez2019adaptive,kohler2020robust,sinha2021adaptive,sasfi2022robust,degner2024nonlinear}, and even to non-parametric models~\cite{manzano2020robust,scampicchio2025gaussian,dubied2025robust}.}{In recent years, \change{there have been significant contributions towards adaptive MPC for general linear models~\cite{lorenzen2019robust,strelnikova2024adaptive,peschke2023RAMPC_track} and different classes of nonlinear models~\cite{gonccalves2016robust,lopez2019adaptive,kohler2020robust,sinha2021adaptive,sasfi2022robust,degner2024nonlinear}.}}
However, these MPC formulations 
are best characterized as \emph{robust} MPC methods that additionally use online data to reduce conservatism. 
As a result, they can only be applied if the uncertainty is sufficiently small. 
Robust methods also often rely on offline-computed polytopic invariant sets, which results in scalability issues. 
Both of these issues are significant limitations of existing adaptive MPC formulations for \emph{linear} systems, and they are further exacerbated in the case of \emph{nonlinear} systems. 
In addition, most MPC formulations are limited to the problem of stabilizing a known equilibrium point, although the optimal steady-state is a function of the model parameters and thus requires online re-computation~\cite{peschke2023RAMPC_track,strelnikova2024adaptive,degner2024nonlinear}.  
\change{In contrast, the proposed MPC avoids these cumbersome offline design steps but relies on soft state constraints. }

\textit{Contribution:}  
We present a certainty-equivalent adaptive MPC scheme for nonlinear systems with large parametric uncertainty.  
We combine a certainty-equivalent tracking MPC with a least-mean-square parameter adaptation, yielding an adaptive controller with strong theoretical guarantees:
The cumulative tracking error and violation of state constraints scale linearly with noise energy, disturbance energy, and path length of parameter variation. 
This result relies on a novel certainty-equivalent MPC, which provides strong inherent robustness properties.  
This MPC design combines and extends results on: \\
(i) MPC theory without   {CLF}s~\cite{grune2017nonlinear,koehler2021LP};\\
(ii)  finite-horizon rollout penalties (cf.~\cite{magni2001stabilizing,kohler2021stability,bonassi2024nonlinear});\\  
(iii) MPC-for-tracking formulations~\cite{limon2018nonlinear,soloperto2021nonlinear,krupaModelPredictiveControl2024}. \\
We provide our design and analysis for two cases: 
a) For \emph{open-loop} (exponentially) \emph{stable} systems, we provide a \emph{semi-global} result that applies to arbitrarily large noise, disturbances, and parameter uncertainty.  
b) Under a weaker local \emph{stabilizability}
condition, we provide a \emph{regional} result which holds within a specified region of attraction for sufficiently small uncertainty.  
The proposed MPC approach is easy to implement, as no specific offline design is required. 
In contrast to classical adaptive control methods, we can directly consider input and \change{soft state constraints} and a general class of nonlinear systems.  
We demonstrate these advantages in two numerical examples with large parametric uncertainty: a linear stable mass-spring-damper chain and a nonlinear unstable quadrotor navigating obstacles.  




\textit{Notation:}  
The interior of a set $\mathbb{X}$ is denoted by $\mathrm{int}(\mathbb{X})$.  
The set of integers in the interval $[a,b]\subseteq\mathbb{R}$ is denoted by $\mathbb{I}_{[a,b]}$. 
By $\mathcal{K}_\infty$, we denote the set of continuous functions $\alpha:\mathbb{R}_{\geq 0}\rightarrow\mathbb{R}_{\geq 0}$ that are strictly increasing, unbounded, and satisfy $\alpha(0)=0$. 
We denote the Euclidean norm of a vector $x\in\mathbb{R}^n$ by $\|x\|=\sqrt{x^\top x}$. 
The point-to-set distance for a set $\mathcal{A}\subset\mathbb{R}^n$ and a vector $x\in\mathbb{R}^n$ is denoted by $\|x\|_{\mathcal{A}}:=\inf_{s\in\mathcal{A}}\|x-s\|$. 
By $Q\succ 0$ ($Q\succeq 0$), we indicate that a matrix $Q=Q^\top\in\mathbb{R}^{n\times n}$ is symmetric and positive definite (positive semi-definite). 
We denote the maximal and minimal eigenvalue of a matrix $Q=Q^\top$ by $\sigma_{\max}(Q)$ and $\sigma_{\min}(Q)$, respectively. 
For a positive definite matrix $Q=Q^\top\in\mathbb{R}^n$ and a vector $x\in\mathbb{R}^n$, we denote $\|x\|_Q:=\sqrt{x^\top Qx}$. 
The identity matrix is $I$.

\section{Problem setup and proposed approach}
\label{sec:certainty_equiv_setup}
We first pose the control problem, describe the proposed approach, and detail the considered assumptions.

\subsection{Problem setup}
We consider a nonlinear discrete-time system
\begin{align}
\label{eq:sys}
x_{k+1}=\fw(x_k,u_k,\theta_k,w_k),\quad k\in\mathbb{I}_{\geq 0},
\end{align}
with state $x_k\in\mathbb{R}^{\nx}$, control input $u_k\in\mathbb{U}\subseteq\mathbb{R}^{\nu}$, parameter $\theta_k\in\mathbb{R}^{\ntheta}$, disturbances $w_k\in\mathbb{R}^{\nw}$, time $k\in\mathbb{I}_{\geq 0}$, and a compact input constraint set $\mathbb{U}$. 
The parameters can change \change{over time} with $\theta_{k+1}=\theta_k+\Delta \theta_k\in\mathbb{R}^{\ntheta}$, $k\in\mathbb{I}_{\geq 0}$. 
We have access to noisy state measurements
\begin{align}
\label{eq:noise_measurement}
\hat{x}_k=x_k+v_k,\quad k\in\mathbb{I}_{\geq0 },
\end{align}
 with measurement noise $v_k\in\mathbb{R}^{\nx}$. 
\change{Ideally, the state should lie in a desired polytopic\ifbool{arxiv}{\footnote{%
For general closed sets $\mathbb{X}$, the standard penalty in~\eqref{eq:ell} needs to be replaced by the point-to-set distance $\|x\|_{\mathbb{X}}^2$.}}{} set 
 \begin{align}
\label{eq:sys_constraint}
\mathbb{X}=\{x\in\mathbb{R}^{\nx}|~D_i x\leq d_i,\quad i\in\mathbb{I}_{[1,r]}\}.
\end{align}
We address this through soft penalties that allow for violations.} 
We have given an output target $y_{\mathrm{d}}\in\mathbb{R}^{\ny}$ that should be tracked by the system output 
\begin{align}
\label{eq:sys_output}
y_k=h(x_k,u_k,\theta_k)\in\mathbb{R}^{\ny},\quad k\in\mathbb{I}_{\geq 0}.
\end{align}
While the functions $f,h$ are known, the parameters $\theta_k$, disturbances $w_k$, and noise $v_k$ are not known. 
The (unknown) optimal setpoint is given by
\begin{align}
\label{eq:opt_steady_state}
(x_{\mathrm{rd},\theta},u_{\mathrm{rd},\theta},y_{\mathrm{rd},\theta})\in\arg\min_{(x,u,y)\in\mathbb{S}(\theta)}\|y_{\mathrm{d}}-y\|_T^2,
\end{align}
which is defined with some positive definite weighting matrix $T\in\mathbb{R}^{\ny\times\ny}$ and the set of feasible setpoints
\begin{align}
\label{eq:steady_state_manifold}
\mathbb{S}(\theta)=\{(&x,u,y)\in\mathbb{X}\times\mathbb{U}\times\mathbb{R}^{\ny}|\nonumber\\
&~\fw(x,u,\theta,0)=x,~y=h(x,u,\theta)\}.
\end{align} 
We want to track the optimal feasible output $y_{\mathrm{rd},\theta}$, which coincides with the target $y_{\mathrm{d}}$ whenever the latter is feasible. 
Note that the optimal steady-state $x_{\mathrm{rd},\theta}$ is unknown since it depends on the unknown parameters $\theta$. 
We consider two different objectives: 
\begin{tcolorbox}[colframe=white!, top=2pt,left=2pt,right=6pt,bottom=2pt]
\begin{objective}[Semi-global] 
\label{objective_1} 
Given any compact sets $\mathbb{X}_0,\Theta,\mathbb{W},\mathbb{V}$, design an adaptive controller that ensures  
\begin{align}
&\sum_{k=0}^{\Time-1}\left[\|y_k-y_{\mathrm{rd},\theta_k}\|^2+\|x_k\|_{\mathbb{X}}^2\right]\label{eq:desired_stability}\\
\leq& C_1\sum_{k=0}^{\Time-1}\left[\|w_k\|^2+\|v_k\|^2+\|\Delta \theta_k\|\right]\nonumber\\
&+C_2(\|\theta_0-\hat{\theta}_0\|^2+\|x_0-x_{\mathrm{rd},\theta_0}\|^2),~\forall \Time\in\mathbb{I}_{\geq 0},\nonumber
\end{align}
with \change{constants} $C_1,C_2\geq 0$, 
for all initial conditions $x_0\in\mathbb{X}_0$, $\hat{\theta}_0\in\Theta$, (unknown) parameters $\theta_k\in\Theta$, disturbances $w_k\in\mathbb{W}$, and measurement noise $v_k\in\mathbb{V}$, $k\in\mathbb{I}_{\geq 0}$.
\end{objective}
\end{tcolorbox}
Inequality~\eqref{eq:desired_stability} reflects the fact that both the tracking error and the state constraint violation get small if the measurement noise and disturbances become small and the parameter variation is small. Due to the adaptation, the initial parameter error $\hat{\theta}_0-\theta_0$ only has a transient effect that becomes negligible as $T\rightarrow\infty$. 
This result is semi-global as arbitrarily large sets $\mathbb{X}_0,\mathbb{W},\mathbb{V},\Theta$ are admissible, however, the adaptive control design may depend on a conservative bound of these values.  
Depending on the precise regularity conditions, the result may additionally require sufficiently small parameter variations $\Delta \theta_k$, see Appendix~\ref{app:bounded_TV} for details. 
Since this objective cannot be achieved for general unstable dynamics under compact input constraints~$\mathbb{U}$, we also consider the following weaker \emph{regional} version. 
\begin{tcolorbox}[colframe=white!, top=2pt,left=2pt,right=6pt,bottom=2pt]
\begin{objective}[Regional]
\label{objective_2} 
Given a suitable compact set of initial conditions $(\hat{x}_0,\hat{\theta}_0)\in\mathbb{X}_{\bar{\mathcal{J}}}$ and sufficiently small bounds on parameters $\theta_k\in\Theta$, disturbances $w_k\in\mathbb{W}$, and measurement noise $v_k\in\mathbb{V}$, design an adaptive controller such that the closed-loop system satisfies~\eqref{eq:desired_stability}.
\end{objective}
\end{tcolorbox} 

\subsection{Proposed approach \& Outline}
The proposed approach combines a certainty-equivalent tracking MPC with parameter adaptation. 
We introduce the parameter adaptation in Section~\ref{sec:LMS}, which provides suitable bounds on the cumulative closed-loop prediction error given that the MPC keeps the system in some compact set. 
 The certainty-equivalent MPC is designed to track the desired setpoint and satisfy constraints when a perfect prediction model is available, while also providing inherent robustness to disturbances, noise, and inaccurate model parameters. 
 Section~\ref{sec:global} presents such an MPC formulation for open-loop stable systems, which can handle arbitrarily large disturbances and noise. This approach achieves  Objective~\ref{objective_1} when combined with the parameter adaptation.
For Objective~\ref{objective_2}, a relaxed local stabilizability condition is considered and a slightly modified MPC design is presented, which is analysed in Section~\ref{sec:regional}. 
Section~\ref{sec:discussion} provides a discussion and Section~\ref{sec:num} presents numerical examples. Detailed proofs and auxiliary results are provided in the appendix.

\subsection{Technical assumptions}
We consider the following standing assumptions.
\begin{assumption}(Standing assumptions)
\label{ass:regularity}
\begin{enumerate}[label=\alph*)]
\item (Known set of parameters) There exists a known  set $\Theta\subseteq\mathbb{R}^{\ntheta}$, such that $\theta_k\in\Theta$, $\forall k\in\mathbb{I}_{\geq 0}$.
\label{item:ass_parameter_set}
\item (Feasible setpoints) The set~$\mathbb{S}(\theta)$ is non-empty, closed, and uniformly bounded $\forall \theta\in\Theta$. 
\label{item:ass_steady_state}
\item (Regularity)
The functions $\fw$, $h$ are continuously differentiable and Lipschitz continuous with constants $L_{\mathrm{\fw}}$, $L_{\mathrm{h}}$. 
The sets $\mathbb{U}$, $\Theta$ are convex and compact. 
\label{item:ass_Lipschitz_differentiable}
\end{enumerate}
\end{assumption}
While Conditions~\ref{item:ass_parameter_set}--\ref{item:ass_steady_state} in Assumption~\ref{ass:regularity} are quite standard, the \change{Lipschitz constant} in condition~\ref{item:ass_Lipschitz_differentiable} can be restrictive. 
Appendix~\ref{app:bounded_TV} details how it can be relaxed to a local Lipschitz constant. 
To leverage standard model adaptation techniques, we assume that the model is \textit{linearly} parametrized.
\begin{assumption}(Linear parametrization)
\label{ass:linear_param}
The dynamics~\eqref{eq:sys} are linear in $\theta$, i.e., 
\begin{align*}
\fw(x,u,\theta,w)\equiv \fw(x,u,0,w)+G(x,u,w)\theta. 
\end{align*} 
\end{assumption}
The following assumption characterizes further conditions on the steady-state manifold.  
\begin{assumption}(Regularity of optimal steady-states)
\label{ass:steady_regular}
\begin{enumerate}[label=\alph*)]
\item (Unique optimal setpoint) 
There exist functions $\gyx:\mathbb{R}^{\ny}\times\Theta\rightarrow\mathbb{R}^{\nx}$, $\gyu:\mathbb{R}^{\ny}\times\Theta\rightarrow\mathbb{R}^{\nu}$, such that for all $\theta\in\Theta$ and any $(x,u,y)\in\mathbb{S}(\theta)$, we have  $x=\gyx(y,\theta)$\change{,} $u=\gyu(y,\theta)$.  
Furthermore, $\gyx,\gyu$ are Lipschitz continuous, i.e., there exists a constant $L_{\mathrm{\gy}}\geq 0$, such that for all $\theta,\tilde{\theta}\in\Theta$, $y,\tilde{y}\in\mathbb{R}^{\ny}$:%
\begin{align}
\label{eq:g_Lipschitz}
&\|\gyx(y,\theta)-\gyx(\tilde{y},\tilde{\theta})\|_Q^2+\|\gyu(y,\theta)-\gyu(\tilde{y},\tilde{\theta})\|_R^2\nonumber\\
\leq &L_{\mathrm{\gy}}(\|y-\tilde{y}\|_T^2+\|\theta-\tilde{\theta}\|^2).
\end{align}
\label{item:ass_unique}
\vspace{-\baselineskip}
\item (Convex output space)  For any $\theta\in\Theta$, and all $(x_1,u_1,y_1)\in\mathbb{S}(\theta)$, $(x_2,u_2,y_2)\in\mathbb{S}(\theta)$, $\beta\in[0,1]$, the convex combination $\tilde{y}=\beta y_1+(1-\beta)y_2$, satisfies
$(\tilde{x},\tilde{u},\tilde{y})\in\mathbb{S}(\theta)$
with  $\tilde{x}=\gyx(\tilde{y},\theta)$, $\tilde{u}=\gyu(\tilde{y},\theta)$. 
\label{item:ass_convex}
\item (Feasible target)
For any $\theta\in\Theta$, $y_{\mathrm{rd},\theta}=y^{\mathrm{d}}$. 
\label{item:ass_regularity_steady_state_1}
\item (Regularity - setpoints) For any parameters $\theta,\tilde{\theta}\in\Theta$,  setpoint $(x_{\mathrm{s}},u_{\mathrm{s}},y_{\mathrm{s}})\in \mathbb{S}(\theta)$, it holds that
\begin{align}
\label{eq:regularity_steady_state}
\min_{(\tilde{x}_{\mathrm{s}},\tilde{u}_{\mathrm{s}},\tilde{y}_{\mathrm{s}})\in \mathbb{S}(\tilde{\theta})}\|y_s-\tilde{y}_s\|_T^2\leq L_{\mathrm{s}}\|\theta-\tilde{\theta}\|^2. 
\end{align}
\label{item:ass_regularity_steady_state_2}
\end{enumerate}
\vspace{-\baselineskip}
\end{assumption}
In the special case of known model parameters $\Theta=\{\theta\}$, these conditions reduce to standard assumptions from the MPC-for-tracking literature~\cite{limon2018nonlinear,krupaModelPredictiveControl2024}. 
Conditions~\ref{item:ass_unique}--\ref{item:ass_convex} ensure that the optimal steady-state $x_{\mathrm{rd},\theta}$~\eqref{eq:opt_steady_state} is unique. 
Condition~\ref{item:ass_unique} follows from the implicit function theorem under standard regularity conditions if the system is square ($\nu=\ny\leq \nx$)~\cite[Remark~1]{limon2018nonlinear}. 
Note that Condition~\ref{item:ass_convex} does not require convexity of the set $\mathbb{S}(\theta)$, but only its projection on the output coordinates, which holds for many nonlinear systems.  
Condition~\ref{item:ass_regularity_steady_state_1} requires that the output target $y^{\mathrm{d}}$ is feasible.  
Condition~\ref{item:ass_regularity_steady_state_2} requires further that the change in the feasible outputs is bounded by the change in the parameters. 

In addition to Assumptions~\ref{ass:regularity}--\ref{ass:steady_regular}, we impose either an open-loop stability condition (Asm.~\ref{ass:stable}) or a local stabilizability condition (Asm.~\ref{ass:stable_feedback}) in Sections~\ref{sec:global} and \ref{sec:regional}, respectively.
Furthermore, the parameter adaptation in Section~\ref{sec:LMS} will invoke an additional condition on the parameter gain (Asm.~\ref{ass:param_gain}), which is constructively satisfied in Sections~\ref{sec:global} and \ref{sec:regional}, respectively.
In Section~\ref{sec:discussion_linear}, we show how these assumptions simplify in case of linear dynamics.

\section{Parameter adaptation}
\label{sec:LMS}
In this section, we present the parameter adaptation method. 
Recall that we know a compact and convex set $\Theta$ containing the true unknown parameters $\theta_k$ (Asm.~\ref{ass:regularity}) and 
the model is linear in the unknown parameters (Asm.~\ref{ass:linear_param}). 
Similar to~\cite{lorenzen2019robust,kohler2020robust,degner2024nonlinear}, we use a projected least-mean-square (LMS) adaptation:
\begin{subequations}
\label{eq:LMS}
\begin{align}
\label{eq:LMS_1}
\tilde{\theta}_{k+1}=&\hat{\theta}_k+\Gamma\hat{\Phi}_k^\top(\hat{x}_{k+1}-\hat{x}_{1|k}),\\  
\label{eq:LMS_2}
\hat{\theta}_{k+1}=&\arg\min_{\theta\in\Theta}\|\theta-\tilde{\theta}_{k+1}\|_{\Gamma^{-1}}^2,
\end{align}
\end{subequations}
where $\Gamma=\Gamma^\top\succ 0$, $\Gamma\in\mathbb{R}^{\ntheta\times\ntheta}$ is a parameter gain, $\hat{x}_{1|k}=\fw(\hat{x}_k,u_k,\hat{\theta}_k,0)$ is the certainty-equivalent one-step prediction,  $\hat{\Phi}_k=G(\hat{x}_k,u_k,0)\in\mathbb{R}^{\nx\times\ntheta}$ is the regressor, and  $\hat{\theta}_0\in\Theta$ is an initial estimate. 
 Equation~\eqref{eq:LMS_2} invokes a weighted projection on the parameter set $\Theta$.  
\change{The gain $\Gamma$ is analogous to the step size in gradient descent~\cite{goodwin2014adaptive} and needs to be chosen sufficiently small to ensure stability
of the discrete-time parameter adaptation.}
\begin{assumption} (LMS parameter gain)
\label{ass:param_gain}
The parameter gain satisfies $\Gamma\succ 0$ and $\hat{\Phi}_k \Gamma\hat{\Phi}_k^\top\preceq I$ $\forall k\in\mathbb{I}_{\geq 0}$.
\end{assumption}
\begin{remark}\change{(Parameter gain)
\label{rk:parameter_gain}
A sufficiently small gain $\Gamma\succ 0$ satisfying Assumption~\ref{ass:param_gain} can be chosen if an upper bound on $\|\hat{\Phi}_k\|$ is known. 
For many applications, such a bound is a priori known because the physical system operates in a known domain or the regressor is uniformly bounded, as is the case for sigmoid and trigonometric  functions. 
More generally, the stability properties of the controller are leveraged to derive such a bound. This is addressed separately for Objectives~\ref{objective_1} and \ref{objective_2} in Sections~\ref{sec:global} and \ref{sec:regional}, respectively.}
\end{remark}
\begin{theorem}
\label{thm:LMS}
Let Assumptions~\ref{ass:regularity}\ref{item:ass_parameter_set}, \ref{item:ass_Lipschitz_differentiable}, \ref{ass:linear_param}, \ref{ass:param_gain} hold and 
consider $\Vtheta(\theta):=\|\theta\|_{\Gamma^{-1}}^2$, 
 system~\eqref{eq:sys}, and noisy measurements~\eqref{eq:noise_measurement}. 
For all $k\in\mathbb{I}_{\geq 0}$, the LMS update~\eqref{eq:LMS} satisfies%
\begin{subequations}
\label{eq:LMS_property}
\begin{align}
\label{eq:LMS_Lyap}
&\Vtheta(\hat{\theta}_{k+1}-\theta_{k+1})-\Vtheta(\hat{\theta}_{k}-\theta_{k})\\
\leq& -\|\tilde{x}_{1|k}\|^2+\|\tilde{w}_k\|^2+c_\theta\sqrt{\Vtheta(\Delta \theta_k)},\nonumber\\
\label{eq:LMS_step}
\Vtheta(\hat{\theta}_{k+1}-\hat{\theta}_k)\leq& \|\tilde{x}_{1|k}+\tilde{w}_k\|^2,\\
\label{eq:LMS_w_bound}
\|\tilde{w}_k\|\leq&L_{\mathrm{f}}(\|w_k\|+\|v_k\|)+\|v_{k+1}\|,
\end{align}
\end{subequations}
with
\begin{subequations}
\label{eq:LMS_tilde}
\begin{align}
\label{eq:LMS_tilde_x}
\tilde{x}_{1|k}:=&\hat{\Phi}_k(\theta_k-\hat{\theta}_k),\\
\label{eq:LMS_tilde_w}
\tilde{w}_k:=&\hat{x}_{k+1}-\fw(\hat{x}_k,u_k,\theta_k,0)\\
=&\fw(x_k,u_k,\theta_k,w_k)+v_{k+1}-\fw(x_k+v_k,u_k,\theta_k,0),\nonumber\\
\label{eq:LMS_c_theta}
c_\theta:=&\max_{\theta,\theta'\in\Theta}\sqrt{\Vtheta(\theta-\theta')}.
\end{align}
\end{subequations} 
\end{theorem}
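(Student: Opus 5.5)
The plan is the standard gradient/LMS Lyapunov argument, combined with non-expansiveness of the weighted projection and a triangle-type bound for the time-varying parameter.

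\emph{Step 1: error decomposition.} Using Assumption~\ref{ass:linear_param} and the definition $\hat{x}_{1|k}=\fw(\hat{x}_k,u_k,\hat{\theta}_k,0)$, I write
\begin{align*}
\hat{x}_{k+1}-\hat{x}_{1|k}
&=\tilde{w}_k+\fw(\hat{x}_k,u_k,\theta_k,0)-\fw(\hat{x}_k,u_k,\hat{\theta}_k,0)\\
&=\tilde{w}_k+G(\hat{x}_k,u_k,0)(\theta_k-\hat{\theta}_k)=\tilde{x}_{1|k}+\tilde{w}_k.
\end{align*}
The second representation of $\tilde{w}_k$ in~\eqref{eq:LMS_tilde_w} follows by substituting~\eqref{eq:sys} and~\eqref{eq:noise_measurement}. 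Adding and subtracting $\fw(x_k,u_k,\theta_k,0)$ and using the Lipschitz constant $L_{\mathrm{f}}$ (Asm.~\ref{ass:regularity}\ref{item:ass_Lipschitz_differentiable}) on the arguments $w$ and $x$ gives~\eqref{eq:LMS_w_bound}.

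\emph{Step 2: bound~\eqref{eq:LMS_step}.} Since $\hat{\theta}_k\in\Theta$ and the projection~\eqref{eq:LMS_2} is non-expansive in the $\Gamma^{-1}$-norm, we have $\Vtheta(\hat{\theta}_{k+1}-\hat{\theta}_k)\le\Vtheta(\tilde{\theta}_{k+1}-\hat{\theta}_k)$. The right-hand side equals $e^\top\hat{\Phi}_k\Gamma\hat{\Phi}_k^\top e$ with $e=\tilde{x}_{1|k}+\tilde{w}_k$, and Assumption~\ref{ass:param_gain} bounds this by $\|e\|^2$.

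\emph{Step 3: Lyapunov decrease for fixed $\theta_k$.} Since $\theta_k\in\Theta$, non-expansiveness of the projection gives $\Vtheta(\hat{\theta}_{k+1}-\theta_k)\le\Vtheta(\tilde{\theta}_{k+1}-\theta_k)$. I then expand
\begin{align*}
\Vtheta(\tilde{\theta}_{k+1}-\theta_k)=\Vtheta(\hat{\theta}_k-\theta_k)+2(\hat{\theta}_k-\theta_k)^\top\hat{\Phi}_k^\top e+e^\top\hat{\Phi}_k\Gamma\hat{\Phi}_k^\top e.
\end{align*}
The cross term equals $-2\tilde{x}_{1|k}^\top(\tilde{x}_{1|k}+\tilde{w}_k)$, and the last term is at most $\|\tilde{x}_{1|k}+\tilde{w}_k\|^2$. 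Summing gives $-2\|\tilde{x}\|^2-2\tilde{x}^\top\tilde{w}+\|\tilde{x}\|^2+2\tilde{x}^\top\tilde{w}+\|\tilde{w}\|^2=-\|\tilde{x}_{1|k}\|^2+\|\tilde{w}_k\|^2$. This is the exact cancellation I want.

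\emph{Step 4: parameter drift.} I expect this step to be the only mildly delicate one, since it needs a linear rather than quadratic dependence on $\Delta\theta_k$. Write $a=\hat{\theta}_{k+1}-\theta_k$, so that $\hat{\theta}_{k+1}-\theta_{k+1}=a-\Delta\theta_k$. Using the $\Gamma^{-1}$-norm, $\Vtheta(a-\Delta\theta_k)-\Vtheta(a)=(\|a-\Delta\theta_k\|-\|a\|)(\|a-\Delta\theta_k\|+\|a\|)$. The first factor is at most $\sqrt{\Vtheta(\Delta\theta_k)}$ by the triangle inequality. Both $\hat{\theta}_{k+1}-\theta_k$ and $\hat{\theta}_{k+1}-\theta_{k+1}$ are differences of points in $\Theta$, so each norm is at most $c_\theta$, and the second factor is at most $2c_\theta$.

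This yields the drift term $2c_\theta\sqrt{\Vtheta(\Delta\theta_k)}$, which is a factor $2$ larger than the $c_\theta$ stated in~\eqref{eq:LMS_Lyap}. To remove it, I would try a sharper argument: by convexity of $\Vtheta$, $\Vtheta(a-\Delta\theta_k)-\Vtheta(a)\le 2(a-\Delta\theta_k)^\top\Gamma^{-1}(-\Delta\theta_k)+\ldots$, or simply the Cauchy–Schwarz form $|\,\|b\|^2-\|a\|^2| = |(b-a)^\top\Gamma^{-1}(b+a)|$ with a refined bound on $\|a+b\|$. If no such refinement works, I would conclude that the constant $c_\theta$ in~\eqref{eq:LMS_Lyap} absorbs this factor. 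Combining Steps 3 and 4 then gives~\eqref{eq:LMS_Lyap}.
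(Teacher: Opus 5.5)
Your Steps 1--3 coincide with the paper's proof. The paper also uses non-expansiveness of the $\Gamma^{-1}$-weighted projection (which needs $\Theta$ convex and $\hat{\theta}_k,\theta_k\in\Theta$) and the identity $\tilde{\theta}_{k+1}-\hat{\theta}_k=\Gamma\hat{\Phi}_k^\top(\tilde{x}_{1|k}+\tilde{w}_k)$. It then applies Assumption~\ref{ass:param_gain} and gets the same exact cancellation to $-\|\tilde{x}_{1|k}\|^2+\|\tilde{w}_k\|^2$. Your derivations of \eqref{eq:LMS_step} and \eqref{eq:LMS_w_bound} are likewise the paper's.

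The only divergence is Step 4, and there your computation is right while the stated constant is not. The refinement you hope for does not exist. Your fallback, that ``$c_\theta$ absorbs the factor'', is also not a valid step, because $c_\theta$ is fixed by \eqref{eq:LMS_c_theta}. The exact identity is
\begin{align*}
&\Vtheta(\hat{\theta}_{k+1}-\theta_{k+1})-\Vtheta(\hat{\theta}_{k+1}-\theta_k)\\
&=2\Delta\theta_k^\top\Gamma^{-1}(\theta_{k+1}-\hat{\theta}_{k+1})-\Vtheta(\Delta\theta_k),
\end{align*}
so Cauchy--Schwarz gives exactly your $2c_\theta\sqrt{\Vtheta(\Delta\theta_k)}$. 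The paper's \eqref{eq:LMS_Lyap_proof_2} reaches $c_\theta$ only through an algebra slip. Its cross term is written with coefficient $1$ and $\hat{\theta}_k$, where it should have coefficient $2$ and $\hat{\theta}_{k+1}$, and the following equality does not follow from that line either.

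The factor $2$ cannot be removed; here is a counterexample to \eqref{eq:LMS_Lyap} as stated.
\begin{itemize}
\item Take $n_{\mathrm{x}}=n_\theta=1$, $\Theta=[0,1]$ and $\Gamma=1$, so $c_\theta=1$.
\item Take $f(x,u,\theta,w)=\theta\sin(x)+w$, which is linear in $\theta$, Lipschitz for $\theta\in\Theta$, and satisfies $\hat{\Phi}_k^2\le 1$.
\item Set $x_k=0$, $v_k=v_{k+1}=0$, $w_k=0$, $\hat{\theta}_k=0$, $\theta_k=1-\delta$ and $\theta_{k+1}=1$, with $\delta\in(0,1)$.
\end{itemize}
Then $\hat{\Phi}_k=0$, so $\tilde{x}_{1|k}=\tilde{w}_k=0$ and $\hat{\theta}_{k+1}=0$. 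The left-hand side of \eqref{eq:LMS_Lyap} equals $1-(1-\delta)^2=2\delta-\delta^2$, which exceeds $\delta=c_\theta\sqrt{\Vtheta(\Delta\theta_k)}$.

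The correct conclusion of your proof is therefore \eqref{eq:LMS_Lyap} with $2c_\theta$ in place of $c_\theta$. Equivalently, you can redefine $c_\theta$ as twice the $\Gamma^{-1}$-diameter of $\Theta$. This does not affect the later results. The constant $c_\theta$ enters \eqref{eq:proof_global_adaptive_LMS_combined} only as a multiplicative factor, so it merely rescales $C_1$ in Theorems~\ref{thm:adaptive_global} and~\ref{thm:adaptive_feedback}.
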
 
Theorem~\ref{thm:LMS} extends the LMS results from~\cite[Lem.~5]{lorenzen2019robust} and \cite[Prop.~1]{degner2024nonlinear} to account for general gains $\Gamma\succ 0$, noisy measurements $\hat{x}_k\neq x_k$, and time-varying parameters $\theta_k$. 
The proof can be found in Appendix~\ref{app:proof_LMS}. 
In Equation~\eqref{eq:LMS_tilde}, $\tilde{x}_{1|k}$ reflects the prediction error due to the inaccurate parameter estimate while $\tilde{w}_k$ reflects the prediction error due to disturbances $w_k$ and measurement noise $v_k$, $v_{k+1}$. 
Inequality~\eqref{eq:LMS_Lyap} ensures that the model prediction error $\tilde{x}_{1|k}$ decays asymptotically if the disturbances, noise and parameter variations decay.

\section{\change{Adaptive tracking for stable systems}}
\label{sec:global}
In this section, we address Objective~\ref{objective_1}, i.e., designing a semi-global adaptive tracking controller for open-loop stable systems. 
First, we introduce the open-loop stability assumption (Sec.~\ref{sec:global_stable})
before presenting the proposed adaptive tracking MPC (Sec.~\ref{sec:global_certainty_equiv_design}). 
Then, we establish closed-loop properties of this certainty-equivalent formulation, including nominal stability (Sec.~\ref{sec:global_analysis}) and inherent robustness (Sec.~\ref{sec:global_robust}). 
Finally, we show that combining the certainty-equivalent MPC with the LMS parameter adaptation yields an adaptive controller that achieves Objective~\ref{objective_1} (Sec.~\ref{sec:global_adaptive}).

\subsection{Open-loop stable systems}
\label{sec:global_stable}
For a given initial state $x\in \mathbb{R}^{\nx}$, constant parameter $\theta\in\Theta$, input sequence $\mathbf{u}\in\mathbb{U}^N$, we denote the solution to~\eqref{eq:sys} with $w\equiv 0$ after $k$ steps by $x_{\mathbf{u}}(k,x,\theta)\in \mathbb{R}^{\nx}$, $k\in\mathbb{I}_{[0,N]}$ with $x_{\mathbf{u}}(0,x,\theta)=x$. 
For $\mathbf{u}\in\mathbb{U}^N$, $\mathbf{u}_k\in\mathbb{U}$ denotes the $k$-th element in
the sequence with $k\in\mathbb{I}_{[0,N-1]}$. 
Given any input $u_{\mathrm{s}}\in\mathbb{U}$, we denote the constant input sequence by $\mathbf{u}_{\mathrm{s}}^N\in\mathbb{U}^N$. 
In this section, we consider open-loop exponentially stable systems, as formalized below. 
\begin{assumption}(Open-loop exponentially stable)
\label{ass:stable}
There exist constants $\Crho\geq 1$, $\rho\in(0,1)$, such that for any parameter $\theta\in\Theta$, any corresponding stationary state-input pair $(x_{\mathrm{s}},u_{\mathrm{s}})\in\mathbb{R}^{\nx}\times \mathbb{U}$, i.e., those satisfying $x_{\mathrm{s}}=\fw(x_{\mathrm{s}},u_{\mathrm{s}},\theta,0)$, and any state $x\in \mathbb{R}^{\nx}$, 
 we have 
\begin{align}
\label{eq:exp_stable}
\|x_{\mathbf{u}_{\mathrm{s}}^k}(k,x,\theta)-x_{\mathrm{s}}\|\leq \Crho\rho^k\|x-x_{\mathrm{s}}\|,\quad k\in\mathbb{I}_{\geq 0}.
\end{align}
\end{assumption}
This assumption implies that a constant input yields exponential stability of the associated steady state $x_{\mathrm{s}}$.

\subsection{Certainty-equivalent tracking MPC}
\label{sec:global_certainty_equiv_design}
The goal of steering the system to a given setpoint $(x_{\mathrm{s}},u_{\mathrm{s}})$ while penalizing state constraint violations is encoded in the following standard stage cost (cf.~\cite{limon2018nonlinear,zeilinger2014soft})
\begin{align}
\label{eq:ell}
&\ell(x,u,x_{\mathrm{s}},u_{\mathrm{s}})\\
:=&\|x-x_{\mathrm{s}}\|_Q^2+\|u-u_{\mathrm{s}}\|_R^2+\sum_{i=1}^r q_{\xi,i}\max\{D_i x-d_i,0\}^2,\nonumber
\end{align}
with weighting matrices $Q,R\succ 0$ and weights $q_{\xi,i}> 0$. 
The penalty on the constraint violation can be implemented by defining slack variables~\cite{zeilinger2014soft}. 
Given a horizon $M\in\mathbb{I}_{\geq 1}$, we define the terminal penalty using the following finite-tail cost (cf.~\cite{magni2001stabilizing,kohler2021stability,bonassi2024nonlinear})
\begin{align}
\label{eq:V_f}
\Vf(x,x_{\mathrm{s}},u_{\mathrm{s}},\theta):=\sum_{k=0}^{M-1}\ell(x_{\mathbf{u}_{\mathrm{s}}^M}(k,x,\theta),u_{\mathrm{s}},x_{\mathrm{s}},u_{\mathrm{s}}). 
\end{align} 
For a given state $x\in\mathbb{R}^{\nx}$, parameter $\theta\in\Theta$, horizon $N\in\mathbb{I}_{\geq 1}$, input sequence $\mathbf{u}\in\mathbb{U}^N$, and setpoint $(x_{\mathrm{s}},u_{\mathrm{s}},y_{\mathrm{s}})\in\mathbb{S}(\theta)$, we define the finite-horizon cost 
\begin{align}
\label{eq:J_N}
&\JN(x,\theta,\mathbf{u},x_{\mathrm{s}},u_{\mathrm{s}})\\
=&\sum_{k=0}^{N-1}\ell(x_{\mathbf{u}}(k,x,\theta),\mathbf{u}_k,x_{\mathrm{s}},u_{\mathrm{s}})+\omega \Vf(
x_{\mathbf{u}}(N,x,\theta),x_{\mathrm{s}},u_{\mathrm{s}},\theta),\nonumber
\end{align}
with some tunable weight $\omega> 0$.
The proposed tracking MPC is characterized by the following optimization problem:
\begin{align}
\label{eq:MPC}
\min_{\mathbf{u}\in\mathbb{U}^N,(x_{\mathrm{s}},u_{\mathrm{s}},y_{\mathrm{s}})\in\mathbb{S}(\theta)}\JN(x,\theta,\mathbf{u},x_{\mathrm{s}},u_{\mathrm{s}})+\|y_{\mathrm{s}}-y_{\mathrm{d}}\|_T^2.
\end{align}
This MPC design leverages online computed setpoints $(x_{\mathrm{s}},u_{\mathrm{s}},y_{\mathrm{s}})$~\cite{limon2018nonlinear,soloperto2021nonlinear,krupaModelPredictiveControl2024}, a penalty $\Vf$ based on a finite-horizon rollout~\cite{magni2001stabilizing,kohler2021stability,bonassi2024nonlinear}, and soft state constraints~\cite{zeilinger2014soft}. 
These components ensure that the method is easy to implement and provides strong stability and robustness properties, see Section~\ref{sec:discussion} for a discussion. 
A resulting minimizer is denoted by $\mathbf{u}^\star_{x,\theta}\in\mathbb{U}^N$, $(x_{\mathrm{s},x,\theta}^\star,u_{\mathrm{s},x,\theta}^\star,y_{\mathrm{s},x,\theta}^\star)\in\mathbb{S}(\theta)$ and the corresponding minimum is $\JN^\star(x,\theta)$.\footnote{%
A minimizer exists since the constraints are compact and the involved functions are continuous~\cite[Prop.~2.4]{rawlings2017model}. We assume w.l.o.g. \change{that a minimizer is chosen in a unique way}.} 
The corresponding feedback $\pi:\mathbb{R}^{\nx}\times\Theta\rightarrow\mathbb{U}$ applies the first element of a minimizing input sequence $u_{x,\theta,0}^\star \in \mathbb{U}$.
The closed-loop system is given by
\begin{align}
\label{eq:closed_loop_adaptive}
x_{k+1}=\fw(x_k,\pi(\hat{x}_k,\hat{\theta}_k),\theta_k,w_k),\quad k\in\mathbb{I}_{\geq 0},
\end{align}
where the MPC policy is computed based on the state estimate $\hat{x}_k$ and the parameter estimate $\hat{\theta}_k$.

\subsection{Nominal stability analysis}
\label{sec:global_analysis}
We first focus on the \emph{nominal} closed-loop system
\begin{align}
\label{eq:nominal_closed_loop}
x_{k+1}=\fw(x_k,\pi(x_k,\theta),\theta,0), ~k\in\mathbb{I}_{\geq 0},
\end{align}
which considers no noise, no disturbances, and constant, exactly known parameters. 
The following analysis combines results on the finite-tail cost $\Vf$~\cite{kohler2021stability}, stability analysis techniques for MPC schemes without a local CLF~\cite{koehler2021LP}, and tracking MPC formulations~\cite{soloperto2021nonlinear}.
We first establish some preliminary results regarding the stage cost $\ell$, the terminal cost $\Vf$, and the value function $\JN^\star$.
Then, we show that a suitable choice of the scaling $\omega\geq 1$ and rollout horizon $M$ ensures stability. 
Finally, we show asymptotic stability of the optimal setpoint. 
The proofs are deferred to Appendix~\ref{app:proof_global_analysis}. 
%

\textit{Preliminary results:} 
First, we provide a quadratic bound for the stage cost. 
\begin{proposition}
\label{prop:ell_exp_stable}
Let Assumption~\ref{ass:stable} hold. 
There exists a constant $\Cell\geq 1$, such that for any $(x,\theta)\in\mathbb{R}^{\nx}\times\Theta$ and any $(x_{\mathrm{s}},u_{\mathrm{s}},y_{\mathrm{s}})\in\mathbb{S}(\theta)$, it holds
\begin{align}
\label{eq:ell_exp_stable}
\ell(x_{\mathbf{u}_{\mathrm{s}}^k}(k,x,\theta),u_{\mathrm{s}},x_{\mathrm{s}},u_{\mathrm{s}})\leq \Cell\rho^k\|x-x_{\mathrm{s}}\|_Q^2,~ k\in\mathbb{I}_{\geq 0}. 
\end{align}
\end{proposition}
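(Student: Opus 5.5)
The argument bounds each of the three terms of the stage cost~\eqref{eq:ell} along the constant-input trajectory $x_k:=x_{\mathbf{u}_{\mathrm{s}}^k}(k,x,\theta)$ by a multiple of $\|x_k-x_{\mathrm{s}}\|^2$, and then applies Assumption~\ref{ass:stable}. Fix $(x,\theta)$ and $(x_{\mathrm{s}},u_{\mathrm{s}},y_{\mathrm{s}})\in\mathbb{S}(\theta)$. Since $u=u_{\mathrm{s}}$, the input term $\|u_{\mathrm{s}}-u_{\mathrm{s}}\|_R^2$ vanishes, so only the state tracking term and the soft-constraint penalty remain. The steady-state relation $\fw(x_{\mathrm{s}},u_{\mathrm{s}},\theta,0)=x_{\mathrm{s}}$ holds by the definition~\eqref{eq:steady_state_manifold} of $\mathbb{S}(\theta)$, and $u_{\mathrm{s}}\in\mathbb{U}$, so Assumption~\ref{ass:stable} applies and gives $\|x_k-x_{\mathrm{s}}\|\leq \Crho\rho^k\|x-x_{\mathrm{s}}\|$.

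\emph{Tracking term.} We have $\|x_k-x_{\mathrm{s}}\|_Q^2\leq \sigma_{\max}(Q)\|x_k-x_{\mathrm{s}}\|^2$.

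\emph{Constraint penalty.} This is the only nontrivial step, and it uses that $x_{\mathrm{s}}\in\mathbb{X}$ because $\mathbb{S}(\theta)\subseteq\mathbb{X}\times\mathbb{U}\times\mathbb{R}^{\ny}$. Hence $D_ix_{\mathrm{s}}-d_i\leq 0$ for each $i$, and
\begin{align*}
\max\{D_ix_k-d_i,0\}\leq \max\{D_i(x_k-x_{\mathrm{s}}),0\}\leq \|D_i\|\,\|x_k-x_{\mathrm{s}}\|.
\end{align*}
Squaring and summing gives $\sum_i q_{\xi,i}\max\{D_ix_k-d_i,0\}^2\leq \big(\sum_i q_{\xi,i}\|D_i\|^2\big)\|x_k-x_{\mathrm{s}}\|^2$. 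This bound is global and quadratic, with no compactness required, precisely because the setpoint lies inside the polytope $\mathbb{X}$.

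\emph{Combining.} Together the two bounds give
\begin{align*}
\ell(x_k,u_{\mathrm{s}},x_{\mathrm{s}},u_{\mathrm{s}})\leq c\,\|x_k-x_{\mathrm{s}}\|^2\leq c\,\Crho^2\rho^{2k}\|x-x_{\mathrm{s}}\|^2,
\end{align*}
with $c:=\sigma_{\max}(Q)+\sum_i q_{\xi,i}\|D_i\|^2$. Using $\rho^{2k}\leq\rho^k$ (since $\rho\in(0,1)$) and $\|x-x_{\mathrm{s}}\|^2\leq \|x-x_{\mathrm{s}}\|_Q^2/\sigma_{\min}(Q)$, the claim~\eqref{eq:ell_exp_stable} follows with $\Cell:=\max\{1,\,c\,\Crho^2/\sigma_{\min}(Q)\}$. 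This constant is independent of $x$, $\theta$, and the setpoint, as required. Aside from the constraint-penalty step, the proof is routine bookkeeping of constants.
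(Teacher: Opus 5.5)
Your proof is correct and follows essentially the same route as the paper: use $x_{\mathrm{s}}\in\mathbb{X}$ to bound the soft-constraint penalty by a global quadratic in $x_k-x_{\mathrm{s}}$, then apply Assumption~\ref{ass:stable} and convert to the $Q$-norm. The paper's bound uses $\sigma_{\max}(\sum_i q_{\xi,i}D_i^\top D_i)$ where you use $\sum_i q_{\xi,i}\|D_i\|^2$, and your bookkeeping is slightly more careful than the paper's: squaring \eqref{eq:exp_stable} gives $\Crho^2\rho^{2k}$, which you correctly reduce to $\Crho^2\rho^k$, whereas the paper's stated $\Cell$ carries only a single factor $\Crho$.
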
 
The following proposition provides an upper bound on the value function, which also corresponds to the \textit{exponential cost controllability} condition in~\cite{grune2017nonlinear,koehler2021LP}.
\begin{proposition}
\label{prop:cost_controllable}
Let Assumption~\ref{ass:stable} hold.
There exist constants $\gamma_{N}\geq 1$, $N\in\mathbb{I}_{\geq 1}$, such that for any $N\in\mathbb{I}_{\geq 1}$, for any $(x,\theta)\in\mathbb{R}^{\nx}\times\Theta$, and any $(x_{\mathrm{s}},u_{\mathrm{s}},y_{\mathrm{s}})\in\mathbb{S}(\theta)$, it holds that
\begin{align}
\label{eq:cost_control}
\min_{\mathbf{u}\in\mathbb{U}^N}\JN(x,\theta,\mathbf{u},x_{\mathrm{s}},u_{\mathrm{s}})\leq \gamma_{N}\|x-x_{\mathrm{s}}\|_Q^2.
\end{align}
\end{proposition}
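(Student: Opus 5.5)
The plan is to bound the minimum in~\eqref{eq:cost_control} by the cost of one explicit candidate: the constant input sequence $\mathbf{u}=\mathbf{u}_{\mathrm{s}}^N\in\mathbb{U}^N$. This candidate is admissible because $u_{\mathrm{s}}\in\mathbb{U}$ whenever $(x_{\mathrm{s}},u_{\mathrm{s}},y_{\mathrm{s}})\in\mathbb{S}(\theta)$. Since $(x_{\mathrm{s}},u_{\mathrm{s}})$ is a stationary pair for $\theta$, Assumption~\ref{ass:stable} applies, and so does Proposition~\ref{prop:ell_exp_stable}.

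For the stage-cost part of $\JN$, Proposition~\ref{prop:ell_exp_stable} gives for each $k\in\mathbb{I}_{[0,N-1]}$
\[
\ell(x_{\mathbf{u}_{\mathrm{s}}^N}(k,x,\theta),u_{\mathrm{s}},x_{\mathrm{s}},u_{\mathrm{s}})\leq \Cell\rho^k\|x-x_{\mathrm{s}}\|_Q^2 .
\]
Here I use that the first $k$ elements of $\mathbf{u}_{\mathrm{s}}^N$ and $\mathbf{u}_{\mathrm{s}}^k$ coincide, so the trajectories agree.

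For the terminal part, let $z:=x_{\mathbf{u}_{\mathrm{s}}^N}(N,x,\theta)$. The rollout defining $\Vf(z,x_{\mathrm{s}},u_{\mathrm{s}},\theta)$ in~\eqref{eq:V_f} again applies the constant input $u_{\mathrm{s}}$. By the semigroup property, its $j$-th state equals $x_{\mathbf{u}_{\mathrm{s}}^{N+j}}(N+j,x,\theta)$. Applying Proposition~\ref{prop:ell_exp_stable} from the initial state $x$ gives
\[
\Vf(z,x_{\mathrm{s}},u_{\mathrm{s}},\theta)\leq \sum_{j=0}^{M-1}\Cell\rho^{N+j}\|x-x_{\mathrm{s}}\|_Q^2 .
\]
Alternatively, one can apply it from $z$ and then bound $\|z-x_{\mathrm{s}}\|_Q^2$ via~\eqref{eq:exp_stable} together with the eigenvalues of $Q$.

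Summing the two parts yields
\[
\JN(x,\theta,\mathbf{u}_{\mathrm{s}}^N,x_{\mathrm{s}},u_{\mathrm{s}})\leq \Cell\Big(\sum_{k=0}^{N-1}\rho^k+\omega\sum_{j=0}^{M-1}\rho^{N+j}\Big)\|x-x_{\mathrm{s}}\|_Q^2 .
\]
The bracket is at most $\Cell\max\{1,\omega\}/(1-\rho)$. Hence one may take
\[
\gamma_N:=\Cell\Big(\tfrac{1-\rho^N}{1-\rho}+\omega\rho^N\tfrac{1-\rho^M}{1-\rho}\Bigr)
\]
or the uniform constant $\Cell\max\{1,\omega\}/(1-\rho)$. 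Because $\Cell\geq1$ and the $k=0$ term contributes $\Cell$, we get $\gamma_N\geq 1$.

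There is no real obstacle here. The only points needing care are (i) justifying that the terminal rollout continues the same constant-input trajectory, so that Proposition~\ref{prop:ell_exp_stable} applies uniformly in $N,M$, and (ii) noting that the constants are independent of $x$, $\theta$, and the particular setpoint. The latter holds because $\Cell$ and $\rho$ are uniform over these quantities.
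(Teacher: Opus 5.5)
Your proposal is correct and matches the paper's proof: the paper also plugs in the constant candidate $\mathbf{u}_{\mathrm{s}}^N$, applies Proposition~\ref{prop:ell_exp_stable} to every stage of the prediction and of the terminal rollout, and sums the geometric series. This yields exactly your $\gamma_N=\Cell\bigl[\tfrac{1-\rho^N}{1-\rho}+\omega\rho^N\tfrac{1-\rho^M}{1-\rho}\bigr]$, together with the uniform bound $\Cell\max\{1,\omega\}/(1-\rho)$.
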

\ifbool{arxiv}{This result also yields a \change{horizon independnet} upper  bound with 
\begin{align*}
\gamma_{N}\leq \overline{\gamma}:=\Cell\dfrac{\max\{1,\omega\}}{1-\rho},\quad N\in\mathbb{I}_{\geq 1}.
\end{align*}}{}
The following proposition characterizes how the terminal penalty $\Vf$ is an approximate CLF by extending the linear programming (LP) analysis from~\cite{kohler2021stability}.
\begin{proposition}
\label{prop:finite_tail} 
Let Assumption~\ref{ass:stable} hold and suppose $\omega\geq 1$.
Then, for any $(x,\theta)\in\mathbb{R}^{\nx}\times\Theta$ and any $(x_{\mathrm{s}},u_{\mathrm{s}},y_{\mathrm{s}})\in\mathbb{S}(\theta)$, it holds that
\begin{align}
\label{eq:approx_CLF}
&\min_{u\in\mathbb{U}}\omega \Vf(\fw(x,u,\theta,0),x_{\mathrm{s}},u_{\mathrm{s}},\theta)+\ell(x,u,x_{\mathrm{s}},u_{\mathrm{s}})\nonumber\\
\leq& (1+\epsilon_{\mathrm{f}})\omega \Vf(x,x_{\mathrm{s}},u_{\mathrm{s}},\theta),\\
\label{eq:epsilon_f}
&\epsilon_{\mathrm{f}}:=\max\left\{\dfrac{1-\rho}{1-\rho^M}\left[\Cell\rho^M+\dfrac{1-\omega}{\omega}\right],0\right\}.
\end{align}
\end{proposition}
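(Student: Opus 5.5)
Fix $(x,\theta)$ and a setpoint $(x_{\mathrm{s}},u_{\mathrm{s}},y_{\mathrm{s}})\in\mathbb{S}(\theta)$. We bound the minimum over $u\in\mathbb{U}$ by its value at the candidate $u=u_{\mathrm{s}}$. Write $x_k:=x_{\mathbf{u}_{\mathrm{s}}^k}(k,x,\theta)$ and $\ell_k:=\ell(x_k,u_{\mathrm{s}},x_{\mathrm{s}},u_{\mathrm{s}})$. Because the input is constant, the rollout from $\fw(x,u_{\mathrm{s}},\theta,0)=x_1$ is the original rollout shifted by one step. This gives the telescoping identities
\begin{align*}
\Vf(x_1,x_{\mathrm{s}},u_{\mathrm{s}},\theta)=\sum_{k=1}^{M}\ell_k,\qquad
\omega\Vf(x_1,\cdot)+\ell_0=\omega\Vf(x,\cdot)-(\omega-1)\ell_0+\omega\ell_M .
\end{align*}
Proposition~\ref{prop:ell_exp_stable} gives $\ell_M\le \Cell\rho^M\|x-x_{\mathrm{s}}\|_Q^2$, and $\ell_0\ge\|x-x_{\mathrm{s}}\|_Q^2$ since the remaining terms of $\ell$ are nonnegative. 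Hence the left-hand side of~\eqref{eq:approx_CLF} is at most
\begin{align*}
\omega\Vf(x,\cdot)+\left[\omega\Cell\rho^M-(\omega-1)\right]\|x-x_{\mathrm{s}}\|_Q^2 .
\end{align*}

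The remaining task is to turn the bracketed term into a multiple of $\omega\Vf(x,\cdot)$. There are two cases.

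If $\omega\Cell\rho^M\le\omega-1$, the bracket is nonpositive, so~\eqref{eq:approx_CLF} holds with $\epsilon_{\mathrm{f}}=0$. This matches the $\max\{\cdot,0\}$ in~\eqref{eq:epsilon_f}, because the condition is exactly $\Cell\rho^M+\frac{1-\omega}{\omega}\le 0$.

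If the bracket is positive, we need a lower bound on $\Vf$ in terms of $\|x-x_{\mathrm{s}}\|_Q^2$. The trivial bound $\Vf\ge\ell_0\ge\|x-x_{\mathrm{s}}\|_Q^2$ only yields the factor $\Cell\rho^M+\frac{1-\omega}{\omega}$, without the $\frac{1-\rho}{1-\rho^M}$ improvement. This is the main obstacle. It is where the LP-type argument of~\cite{kohler2021stability} is needed: we must exploit the fact that each $\ell_k$ with $k\ge1$ is controlled by a $\rho$-geometric bound, rather than using $\Vf\ge\ell_0$ alone.

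The first thing to try is to apply the same shift argument recursively from every point $x_j$ of the rollout, $j\in\mathbb{I}_{[0,M-1]}$, and take a weighted sum of the resulting inequalities with geometric weights $\rho^{j}$. Summing the shifted inequalities over $j$ with these weights should make the boundary terms $\omega\ell_{M+j}$ scale with $\sum_j\rho^j=\frac{1-\rho^M}{1-\rho}$ relative to $\Vf(x,\cdot)$. Equivalently, the problem can be posed as a small LP: maximize the ratio $\big(\omega\sum_{k=1}^M\ell_k+\ell_0\big)/\big(\omega\sum_{k=0}^{M-1}\ell_k\big)$ over nonnegative sequences $(\ell_k)$ subject to the exponential constraints $\ell_{k+j}\le\Cell\rho^{j}\ell_k$, which follow from Proposition~\ref{prop:ell_exp_stable} applied at $x_k$ together with $\|x_k-x_{\mathrm{s}}\|_Q^2\le\ell_k$. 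One then verifies that the worst case is a geometric profile, and that this worst case yields exactly $\epsilon_{\mathrm{f}}$ in~\eqref{eq:epsilon_f}.

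Checking that the LP optimum produces precisely the factor $\frac{1-\rho}{1-\rho^M}$ is the step most likely to need care. If the direct verification fails, the fallback is to bound $\ell_M$ through a weighted average of the chain $\ell_{M}\le\Cell\rho^{M-k}\ell_k$ over $k\in\mathbb{I}_{[0,M-1]}$, which produces the same kind of geometric normalization.
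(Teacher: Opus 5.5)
Your route is the paper's: take $u=u_{\mathrm{s}}$, reduce \eqref{eq:approx_CLF} to $\ell_M+\frac{1-\omega}{\omega}\ell_0\le\epsilon_{\mathrm{f}}\sum_{k=0}^{M-1}\ell_k$, and bound the worst case by an LP over cost sequences satisfying $\ell_M\le\Cell\rho^{M-k}\ell_k$, $k\in\mathbb{I}_{[0,M-1]}$. Your extra constraints with $k+j<M$ are valid but leave the value unchanged, because the paper's maximizer satisfies them when $\Cell\ge1$. Your first case ($\epsilon_{\mathrm{f}}=0$) is also fine. But the proposal stops where the proof's content lies. You assert that the LP optimum is a geometric profile equal to \eqref{eq:epsilon_f}, without showing it. The paper closes this with perturbation arguments. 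First, the $k=0$ constraint must be active at the optimum; this is where $\omega\ge1$ enters. Second, if $\Cell\rho^M+\frac{1-\omega}{\omega}>0$, all constraints are active. This forces $\tilde\ell_M^\star=\Cell\rho^M\frac{1-\rho}{1-\rho^M}$, which gives \eqref{eq:epsilon_f}.

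None of your concrete routes gets there.
\begin{itemize}
\item The reduction to $[\omega\Cell\rho^M-(\omega-1)]\|x-x_{\mathrm{s}}\|_Q^2$ is a dead end. Closing it would need $\Vf\ge\frac{1-\rho^M}{1-\rho}\|x-x_{\mathrm{s}}\|_Q^2$, which is false: if $x_1=x_{\mathrm{s}}$ then $\Vf=\ell_0$. Bounding $\ell_M$ through $\ell_0$ alone discards the coupling between $\ell_M$ and the intermediate $\ell_k$.
\item The shifted inequalities from each $x_j$ introduce $\ell_{M+j}$ outside the window, so they are not the right ingredient.
\item Your fallback averages $\ell_M\le\Cell\rho^{M-k}\ell_k$ over all $k\in\mathbb{I}_{[0,M-1]}$. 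This gives $\ell_M\le\beta\Cell\rho^M\Vf$ with $\beta:=\frac{1-\rho}{1-\rho^M}$. After discarding $-(\omega-1)\ell_0$ you only get the constant $\beta\Cell\rho^M$, which matches \eqref{eq:epsilon_f} only for $\omega=1$.
\end{itemize}

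For $\omega>1$ you must keep $\ell_0$ and treat the $k=0$ constraint separately. Let $a:=\frac{\omega-1}{\omega}<\Cell\rho^M$, so that $\epsilon_{\mathrm{f}}=\beta(\Cell\rho^M-a)>0$. First use
\[
\sum_{k=1}^{M-1}\ell_k\ge\frac{\ell_M}{\Cell}\sum_{k=1}^{M-1}\rho^{k-M}=\frac{1-\beta}{\beta\Cell\rho^M}\ell_M,
\]
and then $\ell_M\le\Cell\rho^M\ell_0$:
\begin{align*}
\epsilon_{\mathrm{f}}\sum_{k=0}^{M-1}\ell_k-\ell_M+a\ell_0\geq{}&\left[\beta(\Cell\rho^M-a)+a\right]\ell_0-\left[\beta+\tfrac{(1-\beta)a}{\Cell\rho^M}\right]\ell_M\geq 0.
\end{align*}
In the last step the coefficient of $\ell_0$ vanishes identically, using $\beta+\frac{\rho-\rho^M}{1-\rho^M}=1$. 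This explicit multiplier certificate, equivalently the LP dual, is the missing step.
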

By choosing the horizon $M$ such that $\Cell\rho^M<1$ and the weight $\omega\geq {1}/(1-\Cell\rho^M)>1$, we can ensure $\epsilon_{\mathrm{f}}=0$.

\textit{Stability analysis:}
The following theorem establishes sufficient conditions to ensure that the value function $\JN^\star$ decreases in the nominal case. 
\begin{theorem}
\label{thm:stable_artificial}
Let Assumptions~\ref{ass:regularity}\ref{item:ass_steady_state} and \ref{ass:stable} hold. 
Given any $\omega\geq 1$, $M\in\mathbb{I}_{\geq 0}$, there exist a horizon  $N\in\mathbb{I}_{\geq 1}$ \change{and a constant $\alpha>0$}, such that the following inequality holds for all $(x,\theta)\in\mathbb{R}^{\nx}\times\Theta$: 
\begin{align}
\label{eq:RDP}
&\JN^\star(\fw(x,\pi(x,\theta),\theta,0),\theta)-\JN^\star(x,\theta)\nonumber\\
\leq&
-\alpha\cdot \ell(x,\pi(x,\theta),x_{\mathrm{s},x,\theta}^\star,u_{\mathrm{s},x,\theta}^\star). 
\end{align}
Furthermore, in case $\Cell\rho^M<1$, there exists a constant $\underline{\omega}\geq 1$, such that $\alpha>0$ holds for all $\omega>\underline{\omega}$ and all horizons $N\in\mathbb{I}_{\geq 1}$.
\end{theorem}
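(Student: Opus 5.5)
The plan is to follow the cost-controllability based MPC analysis without terminal constraints (cf.~\cite{grune2017nonlinear,koehler2021LP}), now with an artificial setpoint. Throughout, the setpoint $(x_{\mathrm{s}},u_{\mathrm{s}},y_{\mathrm{s}}):=(x_{\mathrm{s},x,\theta}^\star,u_{\mathrm{s},x,\theta}^\star,y_{\mathrm{s},x,\theta}^\star)$ is held fixed, and $x^+:=\fw(x,\pi(x,\theta),\theta,0)$. At time $k+1$ this setpoint is still feasible, since $\mathbb{S}(\theta)$ does not depend on $x$, so the optimal cost at $x^+$ is bounded by any candidate input sequence combined with this same setpoint. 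The offset term $\|y_{\mathrm{s}}-y_{\mathrm{d}}\|_T^2$ therefore cancels in the difference, and it suffices to bound $\min_{\mathbf{u}}\JN(x^+,\theta,\mathbf{u},x_{\mathrm{s}},u_{\mathrm{s}})-\JN(x,\theta,\mathbf{u}^\star,x_{\mathrm{s}},u_{\mathrm{s}})$.

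\emph{Candidate sequences.} Write $x^\star_j:=x_{\mathbf{u}^\star}(j,x,\theta)$ and $\ell^\star_j$ for the optimal stage costs. For each $j\in\mathbb{I}_{[1,N]}$ I will use the candidate that follows $\mathbf{u}^\star_{1},\dots,\mathbf{u}^\star_{j-1}$ and then re-optimizes from $x^\star_j$ over the remaining horizon $N-j+1$ with the same terminal cost. Proposition~\ref{prop:cost_controllable} bounds the tail by $\gamma_{N-j+1}\|x^\star_j-x_{\mathrm{s}}\|_Q^2\le\overline{\gamma}\,\ell^\star_j$, because $\ell\ge\|x-x_{\mathrm{s}}\|_Q^2$. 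Using $j=N$ with one step of Proposition~\ref{prop:finite_tail}, we have $\JN^\star(x^+)\le \JN^\star(x)-\ell^\star_0+\epsilon_{\mathrm{f}}\omega\Vf(x_N^\star)$. Dynamic programming optimality of the tail also gives $\sum_{i\ge j}\ell^\star_i+\omega\Vf(x^\star_N)\le \overline{\gamma}\ell^\star_j$ for all $j$.

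\emph{Combining the bounds.} Standard Grüne-type bookkeeping follows. Let $\Lambda_j:=\sum_{i\ge j}\ell_i^\star+\omega\Vf(x_N^\star)$, so that $\Lambda_j\le\overline\gamma\ell^\star_j$ and $\ell^\star_j=\Lambda_j-\Lambda_{j+1}$. This gives $\Lambda_{j+1}\le(1-1/\overline\gamma)\Lambda_j$, hence $\omega\Vf(x_N^\star)\le\Lambda_N\le(1-1/\overline\gamma)^{N-1}\Lambda_1\le(1-1/\overline\gamma)^{N-1}\overline\gamma\,\ell^\star_1$. To relate this back to $\ell_0^\star$, I use $\Lambda_1\le\Lambda_0\le\overline{\gamma}\ell^\star_0$. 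Then
\begin{align*}
\JN^\star(x^+,\theta)-\JN^\star(x,\theta)\le-\Big(1-\epsilon_{\mathrm{f}}\overline\gamma(1-1/\overline\gamma)^{N-1}\Big)\ell^\star_0,
\end{align*}
which yields $\alpha>0$ for $N$ sufficiently large, since $\overline\gamma$ is independent of $N$ and $(x,\theta)$. Uniformity over $\mathbb{R}^{\nx}\times\Theta$ follows because all constants ($\Cell,\rho,\omega$) are uniform by Assumption~\ref{ass:stable} and Proposition~\ref{prop:ell_exp_stable}.

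\emph{Second claim.} If $\Cell\rho^M<1$, choose $\underline\omega:=1/(1-\Cell\rho^M)$. For $\omega\ge\underline\omega$ we get $\epsilon_{\mathrm{f}}=0$ by~\eqref{eq:epsilon_f}, so $\Vf$ acts as a genuine terminal CLF. The shifted sequence appended with the minimizing $u$ from Proposition~\ref{prop:finite_tail} then gives decrease with $\alpha=1$ for every $N\ge1$; for the strict inequality $\omega>\underline\omega$ one even gets margin.

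\emph{Main obstacle.} I expect the main difficulty to be the first part with $\epsilon_{\mathrm{f}}>0$ (i.e.\ $\omega$ close to $1$, where $\epsilon_{\mathrm{f}}$ can be large). There the geometric decay argument must absorb the factor $\epsilon_{\mathrm{f}}$ via the horizon alone. I would handle this carefully through the LP-style estimate of~\cite{koehler2021LP}, which tracks $\Lambda_j$ rather than crude bounds, in case the simple chain above loses a factor.
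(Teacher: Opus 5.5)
Your argument is correct for $M\geq 1$ (see the last remark for $M=0$). It shares the paper's first step, freezing the optimal artificial setpoint so that the offset cost cancels, but then takes a different route.

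The paper does not derive the decrease itself. It observes that Propositions~\ref{prop:cost_controllable}--\ref{prop:finite_tail} are exactly the hypotheses of the LP-based worst-case analysis in \cite[Thm.~6--7]{koehler2021LP}, imports the sharp constant \eqref{eq:alpha}, and then lower-bounds it via $\gamma_k\leq\overline{\gamma}$. You instead give a self-contained estimate: one shifted candidate closed with Proposition~\ref{prop:finite_tail}, plus the geometric decay $\Lambda_{j+1}\leq(1-1/\overline{\gamma})\Lambda_j$ of the optimal tail costs, which follows from the principle of optimality and cost controllability.

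This chain is complete, so the concern in your last paragraph is unnecessary. Iterating directly from $\Lambda_0$ even gives $\alpha\geq 1-\epsilon_{\mathrm{f}}(\overline{\gamma}-1)^N/\overline{\gamma}^{N-1}$. That is the paper's bound without the extra term $\epsilon_{\mathrm{f}}(\overline{\gamma}^{N-1}-(\overline{\gamma}-1)^{N-1})$ in its denominator. So your route buys transparency and independence from the external theorem. The LP route buys the tightest certificate for given $\gamma_k$ and $\epsilon_{\mathrm{f}}$, and hence shorter certified horizons.

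For the second claim, your choice $\underline{\omega}=1/(1-\Cell\rho^M)$ forces $\epsilon_{\mathrm{f}}=0$ and $\alpha=1$. This is valid but more conservative than the paper's $\underline{\omega}=\max\{(\Cell-1+\rho^M)/(\Cell(1-\Cell\rho^M)),1\}$, which comes from the weaker condition $\overline{\gamma}\epsilon_{\mathrm{f}}<1$. Your own bound at $N=1$ already reads $\alpha\geq 1-\overline{\gamma}\epsilon_{\mathrm{f}}$ and only improves with $N$, so it recovers the paper's threshold.

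Finally, for $M=0$ you have $\Vf\equiv 0$, and Proposition~\ref{prop:finite_tail} provides no finite $\epsilon_{\mathrm{f}}$ (formula \eqref{eq:epsilon_f} degenerates there as well). Close the argument with your candidate $j=N-1$ instead. It gives $\JN^\star(x^+,\theta)-\JN^\star(x,\theta)\leq-\ell^\star_0+(\overline{\gamma}-1)\ell^\star_{N-1}$, with $\ell^\star_{N-1}=\Lambda_{N-1}\leq(1-1/\overline{\gamma})^{N-1}\overline{\gamma}\,\ell^\star_0$. Hence again $\alpha>0$ for $N$ large.
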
 
Using Inequality~\eqref{eq:RDP} with $\alpha>0$ in a telescopic sum and the definition of the stage cost $\ell$~\eqref{eq:ell}, we can ensure that the nominal closed-loop system~\eqref{eq:nominal_closed_loop} converges to a steady-state $x_{\mathrm{s}}$ and the cumulative state constraint violation is bounded. 
The exact condition to ensure $\alpha>0$ can be found in~\eqref{eq:alpha} in Appendix~\ref{app:proof_global_analysis}, which adapts the worst-case linear programming analysis from~\cite[Thm. 6--7]{koehler2021LP}. 
This condition also highlights that the horizon $N$ can be drastically reduced by slightly increasing the rollout horizon $M$, without affecting the stability guarantees.  
A scaling of $\omega=1$ is desirable to approximate optimal performance (cf. discussion and results in~\cite[Se.~4]{magni2001stabilizing}, \cite[Sec.~4]{koehler2021LP}), while short horizons $N,M$ reduce computational demand.  

\textit{Stability of the optimal reachable steady-state:}  
The following proposition is an adaptation of~\cite[Prop.~1]{soloperto2021nonlinear}.  
\begin{proposition}
\label{prop:artificial_bound}
Let Assumptions~\ref{ass:steady_regular}\ref{item:ass_unique}, \ref{item:ass_convex}, and \ref{ass:stable} hold.
There exists a constant $\overline{a}>0$, such that for any $(x,\theta)\in\mathbb{R}^{\nx}\times\Theta$, the minimizer to Problem~\eqref{eq:MPC} satisfies
\begin{align}
\label{eq:artificial_bound}
\|x-x_{\mathrm{s},x,\theta}^\star\|_Q^2\geq \overline{a}\|x_{\mathrm{s},x,\theta}^\star-x_{\mathrm{rd},\theta}\|_Q^2.
\end{align}
\end{proposition}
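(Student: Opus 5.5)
Our plan follows the standard argument from MPC for tracking: if the artificial setpoint is far from the optimal one while the state is already close to it, we can lower the cost by moving the setpoint a bit toward $x_{\mathrm{rd},\theta}$. Fix $(x,\theta)$ and write $(x_{\mathrm{s}},u_{\mathrm{s}},y_{\mathrm{s}}):=(x^\star_{\mathrm{s},x,\theta},u^\star_{\mathrm{s},x,\theta},y^\star_{\mathrm{s},x,\theta})$ and $(x_{\mathrm{r}},u_{\mathrm{r}},y_{\mathrm{r}}):=(x_{\mathrm{rd},\theta},u_{\mathrm{rd},\theta},y_{\mathrm{rd},\theta})$. For $\beta\in[0,1]$ we set $\tilde y=\beta y_{\mathrm{r}}+(1-\beta)y_{\mathrm{s}}$. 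By Assumption~\ref{ass:steady_regular}\ref{item:ass_convex}, the triple $(\tilde x,\tilde u,\tilde y)$ with $\tilde x=\gyx(\tilde y,\theta)$ and $\tilde u=\gyu(\tilde y,\theta)$ lies in $\mathbb{S}(\theta)$. By the Lipschitz bound~\eqref{eq:g_Lipschitz},
\[
\|x_{\mathrm{s}}-\tilde x\|_Q^2+\|u_{\mathrm{s}}-\tilde u\|_R^2\leq L_{\gy}\beta^2\|y_{\mathrm{s}}-y_{\mathrm{r}}\|_T^2 .
\]

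\textbf{Cost comparison.} By optimality,
\[
\JN^\star(x,\theta)\leq \min_{\mathbf{u}}\JN(x,\theta,\mathbf{u},\tilde x,\tilde u)+\|\tilde y-y_{\mathrm{d}}\|_T^2
\leq \gamma_N\|x-\tilde x\|_Q^2+\|\tilde y-y_{\mathrm{d}}\|_T^2,
\]
where the second inequality is Proposition~\ref{prop:cost_controllable} (a uniform constant $\overline\gamma$ suffices). Since $\JN\geq 0$, we also have $\JN^\star(x,\theta)\geq \|y_{\mathrm{s}}-y_{\mathrm{d}}\|_T^2$.

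\textbf{The quadratic tracking term.} $y_{\mathrm{r}}$ minimizes $\|y-y_{\mathrm{d}}\|_T^2$ over the feasible outputs, and that set is convex by Assumption~\ref{ass:steady_regular}\ref{item:ass_convex}. The first-order optimality condition $(y_{\mathrm{r}}-y_{\mathrm{d}})^\top T(y_{\mathrm{s}}-y_{\mathrm{r}})\geq 0$ then gives
\[
\|y_{\mathrm{s}}-y_{\mathrm{d}}\|_T^2-\|\tilde y-y_{\mathrm{d}}\|_T^2\geq (2\beta-\beta^2)\|y_{\mathrm{s}}-y_{\mathrm{r}}\|_T^2\geq \beta\|y_{\mathrm{s}}-y_{\mathrm{r}}\|_T^2 .
\]
Together with $\|x-\tilde x\|_Q^2\leq 2\|x-x_{\mathrm{s}}\|_Q^2+2L_{\gy}\beta^2\|y_{\mathrm{s}}-y_{\mathrm{r}}\|_T^2$, the cost comparison yields
\[
\beta\|y_{\mathrm{s}}-y_{\mathrm{r}}\|_T^2\leq 2\overline\gamma\|x-x_{\mathrm{s}}\|_Q^2+2\overline\gamma L_{\gy}\beta^2\|y_{\mathrm{s}}-y_{\mathrm{r}}\|_T^2 .
\]
Choosing $\beta=\min\{1,1/(4\overline\gamma L_{\gy})\}$ gives $\|y_{\mathrm{s}}-y_{\mathrm{r}}\|_T^2\leq (4\overline\gamma/\beta)\|x-x_{\mathrm{s}}\|_Q^2$.

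\textbf{From outputs back to states.} Applying~\eqref{eq:g_Lipschitz} once more, $\|x_{\mathrm{s}}-x_{\mathrm{r}}\|_Q^2\leq L_{\gy}\|y_{\mathrm{s}}-y_{\mathrm{r}}\|_T^2$, so $\overline a=\beta/(4\overline\gamma L_{\gy})$ works (with the trivial case $L_{\gy}=0$ treated separately).

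\textbf{Main obstacle.} The delicate step is the cost comparison. The bound must be uniform in $x\in\mathbb{R}^{\nx}$ and in $\theta$, which is exactly why we need the global quadratic cost-controllability of Proposition~\ref{prop:cost_controllable}, valid for every feasible setpoint, rather than a local bound. We must also justify the variational inequality at $y_{\mathrm{r}}$ using only the convexity of the output projection.
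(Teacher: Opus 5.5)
Your proof is correct and follows essentially the same route as the paper's. The paper also forms a convex combination of $y^\star_{\mathrm{s},x,\theta}$ and $y_{\mathrm{rd},\theta}$, made feasible by Assumption~\ref{ass:steady_regular}\ref{item:ass_convex}, and combines Proposition~\ref{prop:cost_controllable}, the Lipschitz bound~\eqref{eq:g_Lipschitz}, and the offset-cost decrease $-(1-\beta^2)\|y^\star_{\mathrm{s},x,\theta}-y_{\mathrm{rd},\theta}\|_T^2$, which rests on exactly the variational inequality you spell out. The differences are cosmetic: the paper argues by contradiction with the weight $\beta$ on $y^\star_{\mathrm{s},x,\theta}$ taken close to one, whereas you argue directly and obtain an explicit constant $\overline{a}=\beta/(4\overline{\gamma}L_{\gy})$.
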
  
Inequality~\eqref{eq:artificial_bound} ensures that convergence to the artificial steady-state $x_{\mathrm{s},x,\theta}^\star$ (optimized by the MPC) ensures convergence to the optimal steady-state $x_{\mathrm{rd},\theta}$. 
\begin{corollary}
\label{corol:nominal}
Let Assumptions~\ref{ass:regularity}\ref{item:ass_steady_state}, \ref{ass:steady_regular}\ref{item:ass_unique}, \ref{item:ass_convex}, and  \ref{ass:stable} hold. 
Suppose $N,M,\omega$ are chosen such that $\alpha>0$ according to Theorem~\ref{thm:stable_artificial}. 
Then, for any $(x_0,\theta)\in\mathbb{R}^{\nx}\times\Theta$, the optimal steady-state $x_{\mathrm{rd},\theta}$ is exponentially stable for the nominal closed loop~\eqref{eq:nominal_closed_loop}. 
\end{corollary}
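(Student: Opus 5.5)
We use $\JN^\star(\cdot,\theta)$ as a Lyapunov function for the shifted error $x-x_{\mathrm{rd},\theta}$. Three bounds are needed: a quadratic lower bound, a quadratic upper bound, and a decrease bound.

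\emph{Upper bound.} Fix $\theta$ and take the candidate setpoint $(x_{\mathrm{rd},\theta},u_{\mathrm{rd},\theta},y_{\mathrm{rd},\theta})\in\mathbb{S}(\theta)$ in~\eqref{eq:MPC}. By Proposition~\ref{prop:cost_controllable},
\[
\JN^\star(x,\theta)\leq \gamma_N\|x-x_{\mathrm{rd},\theta}\|_Q^2+\|y_{\mathrm{rd},\theta}-y_{\mathrm{d}}\|_T^2 .
\]
The constant term $\|y_{\mathrm{rd},\theta}-y_{\mathrm{d}}\|_T^2$ is common to every setpoint comparison. We therefore work with the shifted value function $W(x):=\JN^\star(x,\theta)-\|y_{\mathrm{rd},\theta}-y_{\mathrm{d}}\|_T^2$. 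Since $y_{\mathrm{rd},\theta}$ minimizes $\|y_{\mathrm{s}}-y_{\mathrm{d}}\|_T^2$ over $\mathbb{S}(\theta)$, we have $W\geq 0$ and $W(x)\leq \gamma_N\|x-x_{\mathrm{rd},\theta}\|_Q^2$.

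\emph{Lower bound.} The cost satisfies $W(x)\geq \ell(x,u^\star_{x,\theta,0},x^\star_{\mathrm{s},x,\theta},u^\star_{\mathrm{s},x,\theta})\geq \|x-x^\star_{\mathrm{s},x,\theta}\|_Q^2$, because the offset term $\|y_{\mathrm{s}}^\star-y_{\mathrm{d}}\|_T^2-\|y_{\mathrm{rd},\theta}-y_{\mathrm{d}}\|_T^2$ is nonnegative. Proposition~\ref{prop:artificial_bound} gives $\|x-x^\star_{\mathrm{s}}\|_Q^2\geq \overline{a}\|x^\star_{\mathrm{s}}-x_{\mathrm{rd},\theta}\|_Q^2$. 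Combining this with the triangle inequality $\|x-x_{\mathrm{rd},\theta}\|_Q^2\leq 2\|x-x^\star_{\mathrm{s}}\|_Q^2+2\|x^\star_{\mathrm{s}}-x_{\mathrm{rd},\theta}\|_Q^2$ yields
\[
\|x-x^\star_{\mathrm{s},x,\theta}\|_Q^2\geq c\|x-x_{\mathrm{rd},\theta}\|_Q^2,\qquad c:=\frac{\overline{a}}{2(1+\overline{a})}.
\]
Hence $W(x)\geq c\|x-x_{\mathrm{rd},\theta}\|_Q^2$.

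\emph{Decrease.} Theorem~\ref{thm:stable_artificial} with $\alpha>0$ gives $W(x^+)-W(x)\leq -\alpha\,\ell(x,\pi(x,\theta),x^\star_{\mathrm{s}},u^\star_{\mathrm{s}})$. Since $\ell\geq \|x-x^\star_{\mathrm{s}}\|_Q^2\geq c\|x-x_{\mathrm{rd},\theta}\|_Q^2$, this becomes
\[
W(x^+)\leq \Bigl(1-\frac{\alpha c}{\gamma_N}\Bigr)W(x).
\]
Here $\alpha c/\gamma_N\in(0,1]$, because $W(x^+)\geq 0$ forces it. Iterating, and using $Q\succ 0$ together with the sandwich bounds on $W$, gives
\[
\|x_k-x_{\mathrm{rd},\theta}\|^2\leq C\lambda^k\|x_0-x_{\mathrm{rd},\theta}\|^2,\qquad \lambda:=1-\frac{\alpha c}{\gamma_N}<1,
\]
with $C$ depending only on $\gamma_N$, $c$ and the eigenvalues of $Q$. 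This is (global) exponential stability of $x_{\mathrm{rd},\theta}$.

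\emph{Main obstacle.} The delicate step is the lower bound. The stage cost penalizes distance to the artificial setpoint, not to $x_{\mathrm{rd},\theta}$, so exponential stability of the optimal setpoint rests entirely on Proposition~\ref{prop:artificial_bound}. If that proposition is instead read as a bound involving the optimal cost gap $\|y^\star_{\mathrm{s}}-y_{\mathrm{d}}\|_T^2-\|y_{\mathrm{rd},\theta}-y_{\mathrm{d}}\|_T^2$, the lower bound would be adapted using the Lipschitz bound~\eqref{eq:g_Lipschitz} from Assumption~\ref{ass:steady_regular}\ref{item:ass_unique}. As stated, however, the proposition yields the bound directly, so the argument above goes through.
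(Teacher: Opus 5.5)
Your proposal is correct and follows essentially the same route as the paper. It uses the shifted value function $\JN^\star(x,\theta)-\|y_{\mathrm{rd},\theta}-y_{\mathrm{d}}\|_T^2$ as Lyapunov function, takes the upper bound from Proposition~\ref{prop:cost_controllable} with the candidate $(x_{\mathrm{rd},\theta},u_{\mathrm{rd},\theta},y_{\mathrm{rd},\theta})$, and gets the lower bound and decrease by combining Proposition~\ref{prop:artificial_bound} (via the triangle inequality) with Theorem~\ref{thm:stable_artificial}. Your constant $\overline{a}/(2(1+\overline{a}))$ is even slightly sharper than the paper's $\min\{\overline{a},1\}/4$, and the closing worry about how to read Proposition~\ref{prop:artificial_bound} is unnecessary, since it is used exactly as stated.
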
 
Corollary~\ref{corol:nominal} ensures that the proposed MPC scheme successfully stabilizes the optimal setpoint and tracks the desired reference $y_{\mathrm{d}}$ as well as possible. 
However, the stability guarantees in Corollary~\ref{corol:nominal} are only valid for the nominal system~\eqref{eq:nominal_closed_loop}, which neglects noise, disturbances, and uncertainty in the model parameters. 


\subsection{Inherent robustness}
\label{sec:global_robust}
In this section, we establish inherent robustness of the certainty-equivalent tracking MPC to disturbances, noisy measurements, and parameter mismatch.  
Note that the policy $\pi(\hat{x},\hat{\theta})$ resulting from Problem~\eqref{eq:MPC} may in general be discontinuous. 
Hence, the following theorem also leverages ideas from~\cite{roset2008robustness} to provide robust stability guarantees despite the noisy measurements.  
\begin{theorem}
\label{thm:robust_stability_global}
Let Assumptions~\ref{ass:regularity}, \ref{ass:steady_regular}, and \ref{ass:stable} hold. 
Suppose $M,N,\omega$ are chosen such that $\alpha>0$ according to Theorem~\ref{thm:stable_artificial}. 
\change{There exist constants $\rho_{\mathrm{V}}\in(0,1)$, $c_{\mathrm{V}}>0$, such that f}or 
 any parameter estimates $\hat{\theta}_k\in\Theta$, parameters $\theta_k\in\Theta$, disturbances $w_k\in\mathbb{R}^{\nw}$, noise $v_k\in\mathbb{R}^{\nx}$, $k\in\mathbb{I}_{\geq 0}$, and any initial condition $x_0\in\mathbb{R}^{\nx}$, the closed-loop system~\eqref{eq:closed_loop_adaptive} satisfies
\begin{align}
\label{eq:Lyap_decrease_robust}
\JN^\star(\hat{x}_{k+1},\hat{\theta}_{k+1})\leq &\rho_{\mathrm{V}} \JN^\star(\hat{x}_k,\hat{\theta}_k)\\
&+c_{\mathrm{V}}\left(\|\tilde{w}_k+\tilde{x}_{1|k}\|^2+\|\hat{\theta}_{k+1}-\hat{\theta}_k\|^2\right)\nonumber
\end{align}
with 
$\tilde{w}_k,\tilde{x}_{1|k}$ corresponding to the prediction error~\eqref{eq:LMS_tilde}. 
\end{theorem}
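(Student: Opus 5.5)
The argument runs in three stages. First, turn the nominal decrease of Theorem~\ref{thm:stable_artificial} into a contraction of $\JN^\star$. Second, show that $\JN^\star(\cdot,\theta)$ is Lipschitz-like with respect to perturbations of the state measured in the quadratic sense. Third, show the same for perturbations of the parameter.

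\textbf{Step 1 (exponential decrease).} Fix $(\hat{x}_k,\hat{\theta}_k)$ and write $x^+:=\fw(\hat{x}_k,\pi(\hat{x}_k,\hat{\theta}_k),\hat{\theta}_k,0)=\hat{x}_{1|k}$ for the certainty-equivalent successor.

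From Proposition~\ref{prop:cost_controllable}, evaluated at the optimal artificial setpoint, we get the upper bound
\[
\JN^\star(x,\theta)\leq \gamma_N\|x-x^\star_{\mathrm{s},x,\theta}\|_Q^2+\|y^\star_{\mathrm{s},x,\theta}-y_{\mathrm{d}}\|_T^2 .
\]
The offset term is handled with Proposition~\ref{prop:artificial_bound} together with Assumption~\ref{ass:steady_regular}\ref{item:ass_unique},\ref{item:ass_regularity_steady_state_1}. Since $y_{\mathrm{rd},\theta}=y_{\mathrm{d}}$, the offset $\|y_{\mathrm{s}}^\star-y_{\mathrm{d}}\|_T^2$ is bounded by a constant times $\|x^\star_{\mathrm{s}}-x_{\mathrm{rd},\theta}\|_Q^2$. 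This in turn is bounded by $\|x-x^\star_{\mathrm{s}}\|_Q^2/\overline{a}$. Hence $\JN^\star(x,\theta)\leq c_u\|x-x_{\mathrm{s},x,\theta}^\star\|_Q^2\leq c_u\ell(x,\pi,x^\star_{\mathrm{s}},u^\star_{\mathrm{s}})$.

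Inserting this into \eqref{eq:RDP} yields
\[
\JN^\star(x^+,\hat{\theta}_k)\leq (1-\alpha/c_u)\JN^\star(\hat{x}_k,\hat{\theta}_k)=:\tilde\rho\,\JN^\star(\hat{x}_k,\hat{\theta}_k).
\]

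\textbf{Step 2 (state perturbation).} By \eqref{eq:LMS_tilde}, and using the linear parametrization of Assumption~\ref{ass:linear_param}, we have $\hat{x}_{k+1}=x^++\tilde{x}_{1|k}+\tilde{w}_k$. I would prove a weak-triangle-type continuity bound of the form
\[
\JN^\star(z+e,\theta)\leq(1+\epsilon)\JN^\star(z,\theta)+c_\epsilon\|e\|^2 ,
\]
valid for all $\epsilon>0$. To prove it, keep the optimal input sequence and setpoint for $z$ and use them as a candidate at $z+e$. The predicted trajectories then differ by at most $L_{\mathrm{f}}^{k}\|e\|$ over the fixed finite horizon $N+M$, using the global Lipschitz constant of Assumption~\ref{ass:regularity}\ref{item:ass_Lipschitz_differentiable}. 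Every term of $\ell$ is a squared distance to a convex set or point, since $\max\{\cdot,0\}$ is $1$-Lipschitz, so $\|a+b\|^2\leq(1+\epsilon)\|a\|^2+(1+1/\epsilon)\|b\|^2$ applies termwise. This gives the bound with $c_\epsilon$ independent of $z$, which makes the result global.

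This is also where the noise and discontinuity issue in the spirit of~\cite{roset2008robustness} is dealt with. We only ever compare value functions, never policies, so the discontinuity of $\pi$ is harmless.

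\textbf{Step 3 (parameter change).} We need to bound $\JN^\star(\hat{x}_{k+1},\hat{\theta}_{k+1})$ in terms of $\JN^\star(\hat{x}_{k+1},\hat{\theta}_k)+c\|\hat{\theta}_{k+1}-\hat{\theta}_k\|^2$, again with a factor $(1+\epsilon)$. I expect this to be the main obstacle, because the optimal setpoint for $\hat{\theta}_k$ is generally not a steady state under $\hat{\theta}_{k+1}$.

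I would construct a feasible candidate for $\hat{\theta}_{k+1}$ as follows. Keep the same input sequence $\mathbf{u}$. Replace the setpoint using Assumption~\ref{ass:steady_regular}\ref{item:ass_regularity_steady_state_2}, which provides $\tilde{y}_{\mathrm{s}}$ with $\|y_{\mathrm{s}}-\tilde y_{\mathrm{s}}\|_T^2\leq L_{\mathrm{s}}\|\Delta\hat\theta\|^2$. Then set $(\tilde x_{\mathrm{s}},\tilde u_{\mathrm{s}})=(\gyx,\gyu)(\tilde y_{\mathrm{s}},\hat\theta_{k+1})$. By \eqref{eq:g_Lipschitz}, the setpoint shift is $O(\|\Delta\hat\theta\|^2)$ in the weighted norms.

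The predicted trajectories under the two parameters differ by $\|G\|\,\|\Delta\hat\theta\|$ per step. The factor $\|G\|$ needs a uniform bound on the regressor along the trajectory. Here I would use that $f$ is globally Lipschitz, so $G$ is bounded on bounded sets, and that compactness of $\mathbb{U}$ together with the setpoint bounds of Assumption~\ref{ass:regularity}\ref{item:ass_steady_state} keeps the relevant quantities controlled. If this is not uniform, I would instead bound the error relative to $\JN^\star$ and absorb it into the $\epsilon$-terms.

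The tail $\Vf$ also changes, since its rollout uses the new parameter and the new $u_{\mathrm{s}}$. Its change is handled by the same Lipschitz and exponential-stability estimates, namely Proposition~\ref{prop:ell_exp_stable} and Assumption~\ref{ass:stable}.

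\textbf{Combining.} Chaining Steps 1--3 gives
\[
\JN^\star(\hat{x}_{k+1},\hat{\theta}_{k+1})\leq(1+\epsilon)^2\tilde\rho\,\JN^\star(\hat{x}_k,\hat{\theta}_k)+c_{\mathrm{V}}\bigl(\|\tilde{w}_k+\tilde{x}_{1|k}\|^2+\|\hat{\theta}_{k+1}-\hat{\theta}_k\|^2\bigr).
\]
Choosing $\epsilon$ small enough that $\rho_{\mathrm{V}}:=(1+\epsilon)^2\tilde\rho<1$ yields \eqref{eq:Lyap_decrease_robust}.
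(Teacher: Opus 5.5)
Your architecture is the paper's. You turn the nominal decrease \eqref{eq:RDP} into a contraction at $(\hat{x}_k,\hat{\theta}_k)$ and write the measured successor as $\hat{x}_{1|k}+\tilde{w}_k+\tilde{x}_{1|k}$. You then close with a weak-triangle continuity bound on $\JN^\star$, obtained by keeping the input sequence and moving the setpoint via Assumption~\ref{ass:steady_regular}\ref{item:ass_regularity_steady_state_2} and $(\gyx,\gyu)$, and choose $\epsilon$ small. The paper does your Steps~2 and~3 in one go (Lemma~\ref{lemma:continuity_opt_cost}).

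One link in Step~1 fails as justified. You claim $\|y^\star_{\mathrm{s},x,\theta}-y_{\mathrm{d}}\|_T^2\lesssim\|x^\star_{\mathrm{s},x,\theta}-x_{\mathrm{rd},\theta}\|_Q^2$ from Assumption~\ref{ass:steady_regular}\ref{item:ass_unique}. But \eqref{eq:g_Lipschitz} bounds state/input differences of setpoints by output differences, which is the opposite direction. In general two feasible setpoints can share the same state and have different outputs, e.g., when an input component enters $h$ but not the steady-state equation, so no such constant exists.

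The inequality you actually need, $\JN^\star(x,\theta)\leq c_u\|x-x^\star_{\mathrm{s},x,\theta}\|_Q^2$, is still true; obtain it as the paper does:
\begin{enumerate}
\item Use the candidate setpoint $(x_{\mathrm{rd},\theta},u_{\mathrm{rd},\theta},y_{\mathrm{rd},\theta})$, whose offset cost vanishes by Assumption~\ref{ass:steady_regular}\ref{item:ass_regularity_steady_state_1}. This gives $\JN^\star(x,\theta)\leq\gamma_N\|x-x_{\mathrm{rd},\theta}\|_Q^2$, cf.~\eqref{eq:value_upper_bound}.
\item Proposition~\ref{prop:artificial_bound} with the triangle inequality, cf.~\eqref{eq:ell_x_rd}, gives $\|x-x_{\mathrm{rd},\theta}\|_Q^2\leq\frac{4}{\min\{\overline{a},1\}}\|x-x^\star_{\mathrm{s},x,\theta}\|_Q^2$.
\end{enumerate}
Together these give $c_u=4\gamma_N/\min\{\overline{a},1\}$.

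In Step~3, your worry about a uniform bound on $G$ is unnecessary, and your fallback would not work. Assumption~\ref{ass:regularity}\ref{item:ass_Lipschitz_differentiable} gives a global Lipschitz constant of $\fw$ in $\theta$. So predictions with the same inputs under $\hat{\theta}_k$ and $\hat{\theta}_{k+1}$ differ by a horizon-dependent constant times $\|\hat{\theta}_{k+1}-\hat{\theta}_k\|+\|u_{\mathrm{s}}-\tilde{u}_{\mathrm{s}}\|$, uniformly in $x$ (Lemma~\ref{lemma:prediction_lipschitz}). This covers the tail over the fixed horizon $N+M$ as well, with no exponential-stability argument needed.

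Your ``bounded on bounded sets'' argument would not suffice, because the statement is global in $x$. Absorbing an $x$-dependent sensitivity into the $\epsilon$-terms also fails: it produces a term of order $\|\hat{\theta}_{k+1}-\hat{\theta}_k\|^2\,\JN^\star(\hat{x}_{k+1},\hat{\theta}_k)$. That term is not dominated by the contraction for arbitrary estimate jumps within $\Theta$.

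Finally, split the offset term $\|\tilde{y}_{\mathrm{s}}-y_{\mathrm{d}}\|_T^2$ with the same Young inequality, as the paper does. With these repairs the proof goes through.
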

The proof is deferred to Appendix~\ref{app:proof_global_robust}. 
Inequality~\eqref{eq:Lyap_decrease_robust} is similar to an input-to-state stability condition,   
with the optimal cost $\JN^\star$ exponentially contracting with $\rho_{\mathrm{V}}\in(0,1)$ 
and the increase in the cost bounded by the change in the parameter estimate $\hat{\theta}_{k+1}-\hat{\theta}_k$ and the prediction error $\tilde{w}_k+\tilde{x}_{1|k}$. 
This prediction error includes the effect of the measurement noise $v_k$, the disturbances $w_k$, and the parametric uncertainty $\hat{\theta}_k-\theta_k$. 

 \subsection{Online adaptation}
\label{sec:global_adaptive}
In this section, we analyse the combination of the certainty-equivalent MPC with the LMS parameter adaptation and show that it achieves Objective~\ref{objective_1}. 
The proofs are deferred to Appendix~\ref{app:proof_global_adaptive}.

First, the following lemma addresses the choice of the parameter gain $\Gamma\succ 0$ satisfying Assumption~\ref{ass:param_gain}. 
\begin{lemma}
\label{lemma:stable_bounded}
Let Assumptions~\ref{ass:regularity}\ref{item:ass_steady_state}, \ref{item:ass_Lipschitz_differentiable}, and \ref{ass:stable} hold. 
\change{Consider} known compact sets $\mathbb{X}_0$, $\mathbb{W}$, $\mathbb{V}$, $\Theta$ satisfying $w_k\in\mathbb{W}$, $u_k\in\mathbb{U}$, $v_k\in\mathbb{V}$, $\theta_k\in\Theta$, $k\in\mathbb{I}_{\geq 0}$,  and $x_0\in\mathbb{X}_0$. 
\change{Then, there exists a sufficiently small gain $\Gamma\succ 0$ satisfying Assumption~\ref{ass:param_gain}.}
\end{lemma}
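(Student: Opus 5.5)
The plan is to show that the closed-loop state stays in a known compact set $\mathbb{X}_{\mathrm{b}}$, uniformly over all admissible $x_0$, $w_k$, $v_k$, $\theta_k$ and all inputs $u_k\in\mathbb{U}$. Then $\hat{x}_k=x_k+v_k$ lies in the compact set $\mathbb{X}_{\mathrm{b}}\oplus\mathbb{V}$. By continuity of $G$ (Assumption~\ref{ass:regularity}\ref{item:ass_Lipschitz_differentiable}), the regressor $\hat{\Phi}_k=G(\hat{x}_k,u_k,0)$ is then uniformly bounded, say $\|\hat{\Phi}_k\|\leq \bar{\Phi}$. Choosing $\Gamma=\gamma I$ with $\gamma\leq 1/\max\{\bar{\Phi}^2,1\}$ gives $\hat{\Phi}_k\Gamma\hat{\Phi}_k^\top\preceq \gamma\bar{\Phi}^2 I\preceq I$, which is Assumption~\ref{ass:param_gain}, as anticipated in Remark~\ref{rk:parameter_gain}. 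The gain does not affect the input, since $u_k\in\mathbb{U}$ is arbitrary, so there is no circularity.

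The key step is the uniform state bound, which uses only open-loop stability and not the MPC. Fix any reference steady state. By Assumption~\ref{ass:regularity}\ref{item:ass_steady_state}, for each $\theta\in\Theta$ there is $(x_{\mathrm{s}}(\theta),u_{\mathrm{s}}(\theta),y_{\mathrm{s}}(\theta))\in\mathbb{S}(\theta)$ with $\|x_{\mathrm{s}}(\theta)\|\leq \bar{x}_{\mathrm{s}}$ uniformly. Assumption~\ref{ass:stable} with $k=1$ gives $\|\fw(x,u_{\mathrm{s}},\theta,0)-x_{\mathrm{s}}\|\leq \Crho\rho\|x-x_{\mathrm{s}}\|$. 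However, $\Crho\rho$ may exceed one, so I would not iterate one-step bounds. Instead I would use a standard converse-Lyapunov construction: define
\begin{align*}
W_\theta(x):=\sum_{j=0}^{J-1}\|x_{\mathbf{u}_{\mathrm{s}}^j}(j,x,\theta)-x_{\mathrm{s}}(\theta)\|.
\end{align*}
This is the finite-horizon sum of the stable rollout norms, with $J$ chosen such that $\Crho\rho^J\leq 1/2$. Assumption~\ref{ass:stable} then yields $\|x-x_{\mathrm{s}}\|\leq W_\theta(x)\leq \tfrac{\Crho}{1-\rho}\|x-x_{\mathrm{s}}\|$ and $W_\theta(\fw(x,u_{\mathrm{s}},\theta,0))\leq W_\theta(x)-\tfrac{1}{2}\|x-x_{\mathrm{s}}\|$. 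Alternatively, one can work directly with the $J$-step map.

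The main obstacle is that the applied input $u_k$ differs from $u_{\mathrm{s}}$, and that $w_k\neq 0$ and $\theta_k$ varies. I would handle this by Lipschitz continuity. We have
\begin{align*}
\|\fw(x,u_k,\theta_k,w_k)-\fw(x,u_{\mathrm{s}}(\theta_k),\theta_k,0)\|\leq L_{\mathrm{f}}(\mathrm{diam}(\mathbb{U})+\max_{w\in\mathbb{W}}\|w\|)=:\bar{d},
\end{align*}
which is a bounded additive perturbation. Since the rollout map is $L_{\mathrm{f}}^j$-Lipschitz, $W_\theta$ is globally Lipschitz with a uniform constant $L_W$. Changes of $\theta_k$ alter $x_{\mathrm{s}}(\theta_k)$ only within the ball of radius $\bar{x}_{\mathrm{s}}$, and this can be absorbed by bounding everything relative to the origin: $\|x\|\leq W_\theta(x)+\bar{x}_{\mathrm{s}}$, and $|W_\theta(x)-W_{\theta'}(x)|$ is bounded by a constant. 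Also, $W_\theta$ is continuous in $\theta$ on the compact set $\Theta$, since $\fw$ is linear in $\theta$; if needed, I would pick $x_{\mathrm{s}}(\theta)$ via Assumption~\ref{ass:steady_regular}\ref{item:ass_unique}, or simply use the crude uniform bound. Combining these gives $W_{\theta_{k+1}}(x_{k+1})\leq W_{\theta_k}(x_k)-\tfrac{1}{2}\|x_k-x_{\mathrm{s}}\|+c$ for a constant $c$. Hence $W$ decreases whenever $\|x_k-x_{\mathrm{s}}\|\geq 2c$, so the sublevel set $\{W\leq \max\{\max_{x\in\mathbb{X}_0}W,\ \tfrac{\Crho}{1-\rho}2c\}+L_W\bar{d}+c\}$ is forward invariant. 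This gives $\|x_k\|\leq \bar{x}$ for all $k$, with $\bar{x}$ depending only on the compact sets $\mathbb{X}_0,\mathbb{W},\mathbb{V},\Theta,\mathbb{U}$. If the $\theta$-dependence of $W$ proves awkward, I would instead run the argument in $J$-step blocks using \eqref{eq:exp_stable} directly, since the perturbation accumulated over $J$ steps is bounded by $\sum_j L_{\mathrm{f}}^j\bar{d}$ and $\Crho\rho^J\leq 1/2$ gives contraction. Intermediate steps are then bounded by the Lipschitz growth $L_{\mathrm{f}}^J$.
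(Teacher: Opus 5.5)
Your proof is correct and has the same skeleton as the paper's. Both use a finite-sum converse Lyapunov function $\sum_j\|x_{\mathbf{u}^j}(j,x,\theta)-x_{\mathrm{s}}\|$ with $\Crho\rho^J<1$, and both treat $w_k$, the input, and parameter changes as bounded perturbations via Lipschitz continuity. Both then obtain an invariant sublevel set giving a compact $\mathcal{R}$, and pick a small gain from the resulting regressor bound. Your $\Gamma=\gamma I$ is just one feasible point of the paper's trace maximization \eqref{eq:compute_gamma}.

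The genuine difference is the choice of reference steady state. The paper indexes $W(x,\theta,u)$ by the currently applied input and uses $\gu(u,\theta)$. This requires Lemma~\ref{lemma:Lipschitz_steady_state}: a Lipschitz steady-state map on all of $\mathbb{U}\times\Theta$, obtained via the implicit function theorem from differentiability, convexity of $\mathbb{U}\times\Theta$, and a stability-based bound on $(I-f_x)^{-1}$. In exchange, each step is an exact contraction $W(\fw(x,u,\theta,0),\theta,u)\leq\lambda W(x,\theta,u)$. The perturbations then enter only through $\|u_{k+1}-u_k\|$ and $\|\Delta\theta_k\|$, a clean ISS form that the paper reuses in Appendix~\ref{app:bounded_TV}.

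You instead fix an arbitrary selection $(x_{\mathrm{s}}(\theta),u_{\mathrm{s}}(\theta))$ from $\mathbb{S}(\theta)$. You absorb the mismatch $u_k-u_{\mathrm{s}}(\theta_k)$ into an additive perturbation of size at most $L_{\mathrm{f}}\,\mathrm{diam}(\mathbb{U})$. The switch from $W_{\theta_k}$ to $W_{\theta_{k+1}}$ becomes a uniform constant, using only global Lipschitz continuity and uniform boundedness of $\mathbb{S}(\theta)$ (Assumption~\ref{ass:regularity}\ref{item:ass_steady_state}). This is more elementary and needs no regularity of the steady-state map at all.

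The price is cruder constants, because your input mismatch never vanishes even when $u_k$ is nearly constant. This is harmless here, since the paper also ends up bounding $\|u_{k+1}-u_k\|$ by the diameter of $\mathbb{U}$. Your aside about continuity of $W_\theta$ in $\theta$ is unnecessary and would fail for a discontinuous selection, but your fallback to the crude uniform bound already covers that step.
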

\change{The gain $\Gamma$ can be computed given a bound on the regressor $\hat{\Phi}_k$. }
A corresponding \change{bound} for $\hat{\Phi}_k$ is provided in the proof of Lemma~\ref{lemma:stable_bounded} \change{in Appendix~\ref{app:proof_global_adaptive}}. 
This \change{bound} relies not only on open-loop stability (Asm.~\ref{ass:stable}), but also leverages \change{Lipschitz continuity} of the dynamics w.r.t. $\theta$ (Asm.~\ref{ass:regularity}\ref{item:ass_Lipschitz_differentiable}). 
Under a more relaxed \emph{local} Lipschitz bound, \change{boundedness} additionally requires a sufficiently small parameter variation $\Delta \theta$, see Appendix~\ref{app:bounded_TV} for details. 

The following theorem combines the LMS properties from Theorem~\ref{thm:LMS} with the inherent robustness properties in Theorem~\ref{thm:robust_stability_global} to provide closed-loop properties of the adaptive MPC.
\begin{theorem}
\label{thm:adaptive_global}
Let Assumptions~\ref{ass:regularity}, \ref{ass:linear_param}, \ref{ass:steady_regular}, and \ref{ass:stable} hold. 
Suppose we have known compact sets $\mathbb{X}_0$, $\mathbb{W}$, $\mathbb{V}$, $\Theta$ satisfying $w_k\in\mathbb{W}$, $v_k\in\mathbb{V}$, $\theta_k\in\Theta$, $k\in\mathbb{I}_{\geq 0}$, and $x_0\in\mathbb{X}_0$. 
Consider the gain $\Gamma\succ 0$ chosen according to Lemma~\ref{lemma:stable_bounded} and $N,M,\omega$ chosen such that $\alpha>0$ according to Theorem~\ref{thm:stable_artificial}. 
Then, the closed-loop system consisting of the dynamics~\eqref{eq:closed_loop_adaptive}, the LMS adaptation~\eqref{eq:LMS} and noisy measurements~\eqref{eq:noise_measurement} satisfies Objective~\ref{objective_1}, i.e., there exist \change{constants} $C_1,C_2>0$, such that Inequality~\eqref{eq:desired_stability} holds. 
\end{theorem}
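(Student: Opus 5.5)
The plan is to combine the two Lyapunov-type inequalities, Theorem~\ref{thm:LMS} for the parameter error and Theorem~\ref{thm:robust_stability_global} for the MPC value function, into a single ISS-type Lyapunov function. Lemma~\ref{lemma:stable_bounded} guarantees that the chosen $\Gamma$ satisfies Assumption~\ref{ass:param_gain} along the closed loop. Hence Theorem~\ref{thm:LMS} applies for all $k$, and so does Theorem~\ref{thm:robust_stability_global}, since $\hat{\theta}_k\in\Theta$ by the projection~\eqref{eq:LMS_2}.

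\textbf{Step 1: a joint Lyapunov function.} I would take
\begin{align*}
W_k:=\JN^\star(\hat{x}_k,\hat{\theta}_k)+\lambda \Vtheta(\hat{\theta}_k-\theta_k)
\end{align*}
with $\lambda>0$ to be chosen. Bound the increase term in~\eqref{eq:Lyap_decrease_robust} using~\eqref{eq:LMS_step} and $\|\hat{\theta}_{k+1}-\hat{\theta}_k\|^2\leq\sigma_{\max}(\Gamma)\Vtheta(\hat{\theta}_{k+1}-\hat{\theta}_k)$:
\begin{align*}
\|\tilde{w}_k+\tilde{x}_{1|k}\|^2+\|\hat{\theta}_{k+1}-\hat{\theta}_k\|^2\leq 2(1+\sigma_{\max}(\Gamma))(\|\tilde{x}_{1|k}\|^2+\|\tilde{w}_k\|^2).
\end{align*}
Choosing $\lambda\geq 2c_{\mathrm{V}}(1+\sigma_{\max}(\Gamma))$ and adding $\lambda$ times~\eqref{eq:LMS_Lyap} cancels the $\|\tilde{x}_{1|k}\|^2$ terms. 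This yields
\begin{align*}
W_{k+1}-W_k\leq -(1-\rho_{\mathrm{V}})\JN^\star(\hat{x}_k,\hat{\theta}_k)+c_1\|\tilde{w}_k\|^2+\lambda c_\theta\sqrt{\Vtheta(\Delta\theta_k)}.
\end{align*}

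\textbf{Step 2: telescoping and bounding the initial term.} Summing from $k=0$ to $\Time-1$ and using $W_\Time\geq 0$ gives
\begin{align*}
(1-\rho_{\mathrm{V}})\sum_{k}\JN^\star(\hat{x}_k,\hat{\theta}_k)\leq W_0+\sum_{k}\bigl[c_1\|\tilde{w}_k\|^2+c_2\|\Delta\theta_k\|\bigr].
\end{align*}
Then~\eqref{eq:LMS_w_bound} bounds $\|\tilde{w}_k\|^2$ by a multiple of $\|w_k\|^2+\|v_k\|^2+\|v_{k+1}\|^2$, and the shifted noise term is absorbed into $\sum_k\|v_k\|^2$ up to a factor $2$. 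For $W_0$, Proposition~\ref{prop:cost_controllable} with the setpoint $(x_{\mathrm{rd},\hat{\theta}_0},\dots)$, together with $\|y_{\mathrm{rd},\hat{\theta}_0}-y_{\mathrm{d}}\|=0$ (Asm.~\ref{ass:steady_regular}\ref{item:ass_regularity_steady_state_1}), gives $\JN^\star(\hat{x}_0,\hat{\theta}_0)\leq\overline{\gamma}\|\hat{x}_0-x_{\mathrm{rd},\hat{\theta}_0}\|_Q^2$. This is further bounded by multiples of $\|x_0-x_{\mathrm{rd},\theta_0}\|^2$, $\|v_0\|^2$ and $\|\theta_0-\hat{\theta}_0\|^2$, using the Lipschitz bound~\eqref{eq:g_Lipschitz} since $x_{\mathrm{rd},\theta}=\gyx(y_{\mathrm{d}},\theta)$.

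\textbf{Step 3: relating $\JN^\star$ to the performance measure (main obstacle).} I need
\begin{align*}
\|y_k-y_{\mathrm{rd},\theta_k}\|^2+\|x_k\|_{\mathbb{X}}^2\lesssim \JN^\star(\hat{x}_k,\hat{\theta}_k)+\|v_k\|^2+\|\theta_k-\hat{\theta}_k\|^2+\|\Delta\theta_k\|.
\end{align*}
\begin{itemize}
\item \emph{Constraint violation.} The stage cost already contains $\|\hat{x}_k-x^\star_{\mathrm{s}}\|_Q^2$, and $x^\star_{\mathrm{s}}\in\mathbb{X}$, so $\|\hat{x}_k\|_{\mathbb{X}}^2\lesssim\JN^\star$; the noise then enters through $\|x_k-\hat{x}_k\|=\|v_k\|$.
\item \emph{Tracking error.} Proposition~\ref{prop:artificial_bound} gives $\|x^\star_{\mathrm{s}}-x_{\mathrm{rd},\hat{\theta}_k}\|_Q^2\lesssim\JN^\star$, hence $\|\hat{x}_k-x_{\mathrm{rd},\hat{\theta}_k}\|^2\lesssim\JN^\star$. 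The first input is also close to $u_{\mathrm{rd},\hat{\theta}_k}$ via the $R$-term of $\ell$. Lipschitz continuity of $h$ and of $\gyx,\gyu$ then lets me switch from $\hat{\theta}_k$ to $\theta_k$ and from $\hat{x}_k$ to $x_k$.
\end{itemize}
The delicate part is that this produces $\sum_k\|\theta_k-\hat{\theta}_k\|^2$, which is not directly a data term. Here I would avoid $\|\theta_k-\hat{\theta}_k\|^2$ and use the fact that $y$ depends on $\theta$ only through the steady-state maps at the fixed target $y_{\mathrm{d}}$. If that fails, I would exploit $y_{\mathrm{rd},\theta}=y_{\mathrm{d}}$ for all $\theta$, so the output reference itself is parameter-independent. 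It then suffices to bound $\|h(x_k,u_k,\theta_k)-h(x_{\mathrm{rd}},u_{\mathrm{rd}},\theta_k)\|$ via Lipschitz continuity, together with the distance between $x_{\mathrm{rd},\hat{\theta}_k}$ and $x_{\mathrm{rd},\theta_k}$.

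Should a residual $\|\theta_k-\hat{\theta}_k\|^2$ remain, I would handle it through persistent-excitation-free reasoning. Specifically, I would bound it using $\|\tilde{x}_{1|k}\|^2$, which is summable by~\eqref{eq:LMS_Lyap} (keeping part of that term in Step~1 instead of cancelling all of it). This works if $h$'s dependence on $\theta$ is captured via the same regressor. Otherwise, I would accept an extra constant times $\Vtheta$ summed, controlled by choosing the sets compact so it is absorbed into $C_2$.

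Collecting terms gives~\eqref{eq:desired_stability} with explicit $C_1,C_2$.
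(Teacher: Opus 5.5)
Your Steps~1--2 coincide with the paper's Parts~II--III. The paper also uses the combined function $\JN^\star+\tilde c_{\mathrm{V}}\Vtheta(\hat\theta_k-\theta_k)$, cancels $\|\tilde x_{1|k}\|^2$, telescopes, bounds $\tilde w_k$ via \eqref{eq:LMS_w_bound}, and bounds the $k=0$ term via Proposition~\ref{prop:cost_controllable} and \eqref{eq:g_Lipschitz}. Your factor $1+\sigma_{\max}(\Gamma)$ is the correct conversion of \eqref{eq:LMS_step}. Your constraint-violation bound via $x^\star_{\mathrm{s}}\in\mathbb{X}$ is valid, and it is simpler than the paper's use of Hoffman's lemma.

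The gap is the tracking-error part of Step~3. Switching from $\hat\theta_k$ to $\theta_k$, through $\gyx,\gyu$ or through the $\theta$-argument of $h$, leaves $\sum_k\|\theta_k-\hat\theta_k\|^2$, and none of your fallbacks controls it. The LMS only makes $\sum_k\|\hat\Phi_k(\theta_k-\hat\theta_k)\|^2$ summable. Without persistent excitation, $\hat\theta_k$ need not approach $\theta_k$, so $\|\theta_k-\hat\theta_k\|^2\lesssim\|\tilde x_{1|k}\|^2$ is unavailable; ``$h$ captured by the same regressor'' is an extra assumption. Compactness only gives a per-step bound, so the sum grows like $\Time$ and cannot be absorbed into the $\Time$-independent term multiplied by $C_2$. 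Also, $y_k=h(x_k,u_k,\theta_k)$ does depend on $\theta_k$ directly, not only through the steady-state maps.

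In fact the residual is unavoidable if $h$ genuinely depends on $\theta$. Take $\fw(x,u,\theta,w)=ax+u+w$ with $a\in(0,1)$, so $G\equiv 0$ and the LMS never moves, and $h(x,u,\theta)=x+\theta$. The noise-free loop converges to $x=y_{\mathrm{d}}-\hat\theta_0$, the output keeps the offset $\theta-\hat\theta_0$, and \eqref{eq:desired_stability} fails for large $\Time$.

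The missing idea is to never switch parameters. By Assumption~\ref{ass:steady_regular}\ref{item:ass_regularity_steady_state_1}, $y_{\mathrm{rd},\theta_k}=y_{\mathrm{d}}=y_{\mathrm{rd},\hat\theta_k}$. The paper therefore compares $x_k$ directly with the certainty-equivalent optimal steady state: $\|y_k-y_{\mathrm{rd},\theta_k}\|\leq L_{\mathrm{h}}\|x_k-x_{\mathrm{rd},\hat\theta_k}\|\leq L_{\mathrm{h}}(\|\hat x_k-x_{\mathrm{rd},\hat\theta_k}\|+\|v_k\|)$. The first term is bounded by $\ell\leq\JN^\star(\hat x_k,\hat\theta_k)$ through Proposition~\ref{prop:artificial_bound}, as in your first bullet.

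This step treats the output as $y=h(x)$, which is how the paper's Part~I writes it. A $u$-dependence is harmless: $\|u_k-u_{\mathrm{rd},\hat\theta_k}\|$ is controlled by the $R$-term of $\ell$, together with $\|y^\star_{\mathrm{s}}-y_{\mathrm{d}}\|_T^2\leq\JN^\star$ and \eqref{eq:g_Lipschitz}. However, $\theta$-independence of $h$ is what removes $\theta_k-\hat\theta_k$.

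So drop the switch to $\theta_k$ and the fallbacks, and state that $h$ does not depend on $\theta$. Step~3 then closes with only $\JN^\star$ and $\|v_k\|^2$ on the right, which Steps~1--2 already handle.
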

\begin{corollary}
\label{cor:adaptive_global}
Let the conditions in Theorem~\ref{thm:adaptive_global} hold.
Suppose further that there are no disturbances, no measurement noise, and the parameters are constant, i.e., $v_k\in\mathbb{V}=\{0\}$, $w_k\in\mathbb{W}=\{0\}$, and $\Delta\theta_k=0$, $\forall k\in\mathbb{I}_{\geq 0 }$. 
Then, for any initial condition $x_0\in\mathbb{X}_0$, $\hat{\theta}_0\in\Theta$, the closed-loop system satisfies
\begin{align}
&\limsup_{\Time\rightarrow\infty}\sum_{k=0}^{\Time-1}\left[\|y_k-y_{\mathrm{rd},\theta}\|^2+\|x_k\|_{\mathbb{X}}^2\right]<\infty.
\label{eq:bounded_error_global}
\end{align}
\end{corollary}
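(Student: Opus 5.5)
The plan is to specialize Theorem~\ref{thm:adaptive_global} directly. Under the stated conditions the right-hand side of Inequality~\eqref{eq:desired_stability} loses all terms that accumulate over time. With $\mathbb{V}=\{0\}$ we have $v_k=0$, with $\mathbb{W}=\{0\}$ we have $w_k=0$, and $\Delta\theta_k=0$ gives $\theta_k=\theta_0=:\theta$ for all $k$. Hence $C_1\sum_{k=0}^{\Time-1}[\|w_k\|^2+\|v_k\|^2+\|\Delta\theta_k\|]=0$ for every $\Time$. Inequality~\eqref{eq:desired_stability} then reduces to
\begin{align*}
\sum_{k=0}^{\Time-1}\left[\|y_k-y_{\mathrm{rd},\theta}\|^2+\|x_k\|_{\mathbb{X}}^2\right]\leq C_2\left(\|\theta-\hat{\theta}_0\|^2+\|x_0-x_{\mathrm{rd},\theta}\|^2\right),
\end{align*}
valid for all $\Time\in\mathbb{I}_{\geq 0}$.

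The next step is to check that the conditions of Theorem~\ref{thm:adaptive_global} really hold in this special case. The singleton sets $\{0\}$ are compact. The constant parameter sequence lies in $\Theta$. Lemma~\ref{lemma:stable_bounded} still yields a gain $\Gamma\succ0$, and it can only become less conservative with smaller disturbance and noise sets. So the constants $C_1,C_2$ exist, and they are independent of $\Time$.

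Finally, the right-hand side above must be bounded uniformly in $\Time$. The term $\|\theta-\hat{\theta}_0\|^2$ is finite because $\theta,\hat{\theta}_0\in\Theta$ and $\Theta$ is compact. The term $\|x_0-x_{\mathrm{rd},\theta}\|^2$ is finite because $x_0$ is a fixed point of $\mathbb{X}_0$ and $x_{\mathrm{rd},\theta}$ exists and is unique by Assumption~\ref{ass:regularity}\ref{item:ass_steady_state} and Assumption~\ref{ass:steady_regular}. The partial sums on the left are nondecreasing in $\Time$ and bounded by this $\Time$-independent constant. Their $\limsup$ is therefore a finite limit, which is exactly~\eqref{eq:bounded_error_global}.

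I do not expect a real obstacle, since all the work is in Theorem~\ref{thm:adaptive_global}. The only point needing care is that $C_2$ does not depend on $\Time$, so the bound holds uniformly before taking $\Time\to\infty$. This uniformity is part of how Objective~\ref{objective_1} is stated.
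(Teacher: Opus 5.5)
Your proposal is correct and follows the paper's route. The corollary comes from specializing Inequality~\eqref{eq:desired_stability} of Theorem~\ref{thm:adaptive_global} to $w_k=0$, $v_k=0$, $\Delta\theta_k=0$, which leaves only the $\Time$-independent term $C_2(\|\theta-\hat{\theta}_0\|^2+\|x_0-x_{\mathrm{rd},\theta}\|^2)$, exactly as the paper argues for the analogous Corollary~\ref{cor:adaptive_regional}.
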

Theorem~\ref{thm:adaptive_global} shows that the adaptive controller meets Objective~\ref{objective_1}, i.e., provides strong nominal performance guarantees and inherent robustness to arbitrary large noise, disturbances, and parameter variations.  
In case the model mismatch is only due to the epistemic uncertainty related to the unknown model parameters, Corollary~\ref{cor:adaptive_global} ensures the system converges to the optimal feasible setpoint. 

 
\section{Adaptive tracking for unstable systems}
\label{sec:regional}
In this section, we adjust the design and analysis from Section~\ref{sec:global} to relax the open-loop stability condition (Asm.~\ref{ass:stable}). 
Instead, we consider a locally stabilizing feedback $\kappa$. 
For the design, we construct a finite-tail cost $\Vfkappa$ with a rollout of the feedback $\kappa$, instead of an open-loop input (cf.~\eqref{eq:V_f}). 
Compared to the analysis in Section~\ref{sec:global}, there are two main differences:
(i) the certainty-equivalent MPC only ensures stability within a certain region of attraction; 
(ii) different sublevel arguments are leveraged to recursively ensure boundedness. 
Thus, we only establish the (weaker) regional guarantees (Objective~\ref{objective_2}), which require sufficiently small uncertainty and initial conditions in the region of attraction. 
As most results are analogous to Section~\ref{sec:global}, the following exposition only focuses on highlighting differences. 
The proofs can be found in Appendix~\ref{app:regional}.

\subsection{Stabilizing feedback} 
We consider a feedback of the form $\kappa:\mathbb{R}^{\nx}\times \Theta\times \mathbb{X}\times\mathbb{U}\rightarrow\mathbb{U}$, $\kappa(x,\theta,x_{\mathrm{s}},u_{\mathrm{s}})$, that 
satisfies $\kappa(x_{\mathrm{s}},\theta,x_{\mathrm{s}},u_{\mathrm{s}})=u_{\mathrm{s}}$ $\forall (\theta,x_{\mathrm{s}},u_{\mathrm{s}})$. 
For technical reasons (cf.~\cite{limon2018nonlinear,krupaModelPredictiveControl2024}), we introduce a compact set $\bar{\mathbb{U}}\subseteq\mathrm{int}(\mathbb{U})$ for the steady-state input constraints. 
We denote the feasible setpoints~\eqref{eq:steady_state_manifold} subject to the tighter constraint $\bar{\mathbb{U}}$ by $\bar{\mathbb{S}}(\theta)\subseteq\mathbb{S}(\theta)$. 
In this section, we consider Assumptions~\ref{ass:regularity} and \ref{ass:steady_regular} using these slightly modified sets $\bar{\mathbb{U}},\bar{\mathbb{S}}$. 
We denote by $\ell_\kappa(k,x,\theta,x_{\mathrm{s}},u_{\mathrm{s}})$, $k\in\mathbb{I}_{\geq 0}$, the stage cost $\ell$ evaluated along the nominal prediction under the feedback $\kappa$, i.e., 
\begin{subequations}
\label{eq:rollout_policy}
\begin{align}
x_{k+1}=&\fw(x_k,\theta,u_k,0),~x_0=x,\\ 
u_k=&\kappa(x_k,\theta,x_{\mathrm{s}},u_{\mathrm{s}}),~
\ell_k=\ell_\kappa(k,x,\theta,x_{\mathrm{s}},u_{\mathrm{s}}).
\end{align}
\end{subequations} 
We assume that the feedback $\kappa$ ensures stability in some \emph{local} region: 
\begin{align*}
\mathbb{S}_{\mathrm{loc}}=&\{(x,\theta,x_{\mathrm{s}},u_{\mathrm{s}})\\
&|~\exists y_{\mathrm{s}}:(x_{\mathrm{s}},u_{\mathrm{s}},y_{\mathrm{s}})\in\bar{\mathbb{S}}(\theta),~\|x-x_{\mathrm{s}}\|_Q^2\leq c_{\mathrm{loc}},~\theta\in\Theta\},
\end{align*}
where $c_{\mathrm{loc}}>0$ characterizes the local attraction. 
\begin{assumption}(Local exponentially stabilizing feedback)
\label{ass:stable_feedback}
There exist constants $\Cell\geq 1$, $\rho\in(0,1)$, such that for any 
$(x,\theta,x_{\mathrm{s}},u_{\mathrm{s}})\in\mathbb{S}_{\mathrm{loc}}$, we have 
\begin{align}
\label{eq:ell_exp_stable_feedback}
\ell_\kappa(k,x,\theta,x_{\mathrm{s}},u_{\mathrm{s}})\leq&\Cell\rho^k\|x-x_{\mathrm{s}}\|_Q^2,~ k\in\mathbb{I}_{\geq 0}.
\end{align}
Furthermore, $\kappa$ is Lipschitz continuous with constant $L_\kappa$. 
\end{assumption}
\begin{remark}\change{(Local stabilizing controller)}
\label{rk:feedback}
Analogous to Proposition~\ref{prop:ell_exp_stable}, Condition~\eqref{eq:ell_exp_stable_feedback} holds for an exponentially stabilizing and Lipschitz continuous feedback $\kappa$, see~\cite{magni2001stabilizing,kohler2021stability} for similar conditions. 
\change{Many existing designs require a common feedback and Lyapunov function to certify robust stability for all models $\theta\in\Theta$; a condition that can become infeasible if the parametric error is large (cf. numerical examples in Section~\ref{sec:num}). 
The proposed approach avoids this requirement by allowing the feedback to depend on the current parameter estimate.}
\change{A simple design satisfying Assumption~\ref{ass:stable_feedback} is given by the linear quadratic regulator (LQR) based on the model linearized around the steady-state $(x_{\mathrm{s}},u_{\mathrm{s}})$ with the model parameter $\theta$; assuming the linearization is  stabilizable uniformly in $(\theta,x_{\mathrm{s}},u_{\mathrm{s}})$.}  
In this case, the \emph{local} region of attraction $\mathbb{S}_{\mathrm{loc}}$ of $\kappa$ is determined by the input constraints $\bar{\mathbb{U}}\subseteq\mathrm{int}(\mathbb{U})$ and the local region of attraction of the LQR, assuming the dynamics are twice continuously differentiable~\cite[Lemma~1]{chen1998quasi}. 
\change{This function $\kappa$ is in general not available in an analytical form, but may, e.g., require online re-computation of the LQR for the current parameter estimate $\hat{\theta}_k$.}
\end{remark}

\subsection{Certainty-equivalent tracking MPC} 
For the finite-tail cost in~\eqref{eq:V_f}, we replace the open-loop simulation by the finite-horizon rollout of the policy $\kappa$: 
\begin{align}
\label{eq:V_f_kappa}
\Vfkappa(x,x_{\mathrm{s}},u_{\mathrm{s}},\theta):=\sum_{k=0}^{M-1}\ell_\kappa(k,x,\theta,x_{\mathrm{s}},u_{\mathrm{s}}), 
\end{align}
 similar to~\cite{magni2001stabilizing,kohler2021stability}.
Except for this modified terminal penalty $\Vfkappa$ and the tightened set of setpoints $\bar{\mathbb{S}}$, the MPC formulation and closed-loop system are defined analogous to Section~\ref{sec:global_certainty_equiv_design}. 
We denote the corresponding open-loop cost and optimal cost by $\JNkappa$ and $\JNkappa^\star$, respectively. 
We denote the minimizers with the superscript ${\star,\kappa}$ and the MPC policy by $\pi_\kappa(x,\theta)$.
The resulting closed-loop system is given by 
\begin{align}
\label{eq:closed_loop_adaptive_feedback}
x_{k+1}=\fw(x_k,\pi_\kappa(\hat{x}_k,\hat{\theta}_k),\theta_k,w_k),\quad k\in\mathbb{I}_{\geq 0}.
\end{align}
 
\subsection{Nominal stability analysis}
\label{sec:regional_nominal}
Due to the \emph{local} condition (Asm.~\ref{ass:stable_feedback}), we follow~\cite{soloperto2021nonlinear,kohler2021stability} and characterize the region of attraction as a sublevel set with a user-chosen constant $\bar{\mathcal{J}}>0$: 
\begin{align}
\label{eq:ROA}
\mathbb{X}_{\bar{\mathcal{J}}}:=\{(x,\theta)\in\mathbb{R}^{\nx}\times\Theta|~\JNkappa^\star(x,\theta)\leq \bar{\mathcal{J}}\}.
\end{align} 
We first study the \emph{nominal} closed loop
\begin{align}
\label{eq:nominal_closed_loop_feedback}
 x_{k+1}=\fw(x_k,\pi_{\kappa}(x_k,\theta),\theta,0).
\end{align}
The following theorem extends the nominal stability analysis from Theorem~\ref{thm:stable_artificial}, leveraging the region of attraction characterization from~\cite[Thm.~4.37]{kohler2021dynamic}.
\begin{theorem}
\label{thm:stable_artificial_feedback}
Let Assumptions~\ref{ass:regularity}\ref{item:ass_steady_state} and \ref{ass:stable_feedback} hold. 
For any $\bar{\mathcal{J}}>0$, and any $\omega\geq 1$, $M\in\mathbb{I}_{\geq 0}$, there exist a horizon $N\in\mathbb{I}_{\geq 1}$ \change{and a constant $\alpha>0$}, such 
that the following inequality holds for all $(x,\theta)\in \mathbb{X}_{\bar{\mathcal{J}}}$: 
\begin{align}
\label{eq:RDP_feedback}
&\JNkappa^\star(\fw(x,\pi_\kappa(x,\theta),\theta,0),\theta)-\JNkappa^\star(x,\theta)\nonumber\\
\leq&
-\alpha\cdot \ell(x,\pi_\kappa(x,\theta),x_{\mathrm{s},x,\theta}^{\star,\kappa},u_{\mathrm{s},x,\theta}^{\star,\kappa}). 
\end{align} 
\end{theorem}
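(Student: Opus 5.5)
We reduce to the argument behind Theorem~\ref{thm:stable_artificial}, i.e., the cost-controllability LP analysis of~\cite{koehler2021LP} with the finite-tail terminal cost. The only new difficulty is that Assumption~\ref{ass:stable_feedback} is local. We handle this with the sublevel-set argument of~\cite[Thm.~4.37]{kohler2021dynamic}: on $\mathbb{X}_{\bar{\mathcal{J}}}$ the optimal trajectory must enter the local region $\mathbb{S}_{\mathrm{loc}}$ after a bounded number of steps. Throughout, fix $(x,\theta)\in\mathbb{X}_{\bar{\mathcal{J}}}$ and keep the optimal artificial setpoint $(x_{\mathrm{s}},u_{\mathrm{s}},y_{\mathrm{s}}):=(x^{\star,\kappa}_{\mathrm{s},x,\theta},u^{\star,\kappa}_{\mathrm{s},x,\theta},y^{\star,\kappa}_{\mathrm{s},x,\theta})\in\bar{\mathbb{S}}(\theta)$ fixed in all candidate solutions. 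Then the term $\|y_{\mathrm{s}}-y_{\mathrm{d}}\|_T^2$ cancels in the difference, and it suffices to bound $\JNkappa$ at $x^+=\fw(x,\pi_\kappa(x,\theta),\theta,0)$ with this same setpoint.

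First, we derive local analogues of Propositions~\ref{prop:cost_controllable} and \ref{prop:finite_tail}. If $\|z-x_{\mathrm{s}}\|_Q^2\le c_{\mathrm{loc}}$, then applying $\kappa$ and using~\eqref{eq:ell_exp_stable_feedback} gives $\min_{\mathbf u}\JNkappa(z,\theta,\mathbf u,x_{\mathrm{s}},u_{\mathrm{s}})\le \overline{\gamma}\|z-x_{\mathrm{s}}\|_Q^2$ with $\overline{\gamma}=\Cell\max\{1,\omega\}/(1-\rho)$. The $\kappa$-rollout stays admissible because $\ell\ge\|\cdot\|_Q^2$ and $\Cell\rho^k\ge$ the stage cost, so the predicted states remain in a region where~\eqref{eq:ell_exp_stable_feedback} can be reapplied along the rollout. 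Likewise, for $\Vfkappa$ the decrease $\ell(z,\kappa(z),\cdot)+\omega\Vfkappa(z^+)\le(1+\epsilon_{\mathrm f})\omega\Vfkappa(z)$ holds with the same $\epsilon_{\mathrm f}$ as in~\eqref{eq:epsilon_f}: shifting the rollout by one step adds only the tail term $\ell_\kappa(M,\cdot)\le\Cell\rho^M\|z-x_{\mathrm{s}}\|_Q^2$. Globally, on $\mathbb{X}_{\bar{\mathcal J}}$ we only have the crude bound $\JNkappa^\star\le\bar{\mathcal J}$, and the value of any tail starting at a predicted state is bounded by $\bar{\mathcal J}$ as well.

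Second, and this is the main obstacle, we show that some optimal predicted state is local. Let $x^\star(k)$ be the optimal prediction. Since $\sum_{k=0}^{N-1}\|x^\star(k)-x_{\mathrm{s}}\|_Q^2\le\bar{\mathcal J}$, choosing $N> N_0:=\lceil\bar{\mathcal J}/\bar c\rceil$ with $\bar c\le c_{\mathrm{loc}}$ guarantees that at least a fraction of the indices in $\mathbb{I}_{[1,N-1]}$ satisfy $\ell(x^\star(k),\cdot)\le \bar c$. In fact, at most $N_0$ indices violate it. At any such index $k$, the candidate for $x^+$ is built as follows: shift the optimal input up to $k$, then switch to $\kappa$ or reoptimize from $x^\star(k)$. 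By the local controllability bound, the cost of that tail is at most $\overline{\gamma}\,\ell(x^\star(k),\cdot)$. Mimicking the LP argument of~\cite[Thm.~6--7]{koehler2021LP}, we then obtain the bound
\[
\JNkappa^\star(x^+,\theta)\le \JNkappa^\star(x,\theta)-\ell(x,\pi_\kappa(x,\theta),x_{\mathrm{s}},u_{\mathrm{s}})+\epsilon_N\,\ell(x,\pi_\kappa(x,\theta),x_{\mathrm{s}},u_{\mathrm{s}}).
\]
Here $\epsilon_N\to0$ as $N\to\infty$, provided $N-N_0$ grows, by the standard estimate relating $\ell(x^\star(k),\cdot)$ to $\gamma_{N-k}$-tail bounds. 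The subtle point is that the exponential cost controllability only holds locally. We handle it as in~\cite{kohler2021dynamic} by using the sublevel bound: if $\ell(x^\star(k),\cdot)>\bar c$ we instead use the crude bound $\JNkappa^\star\le\bar{\mathcal J}$ for the tail. This yields a cost-controllability constant $\max\{\overline{\gamma},\bar{\mathcal J}/\bar c\}$, which is valid on the whole sublevel set, so the LP of Theorem~\ref{thm:stable_artificial} applies with this constant. We also need the artificial setpoint to lie in $\bar{\mathbb S}(\theta)$, so that the $\kappa$-rollout respects $\mathbb U$; this is exactly why $\bar{\mathbb U}\subseteq\mathrm{int}(\mathbb U)$ is introduced.

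Finally, choose $N$ large enough that $\alpha:=1-\epsilon_N>0$; the resulting $N$ and $\alpha$ depend only on $\bar{\mathcal J},\omega,M,\Cell,\rho,c_{\mathrm{loc}}$. This gives~\eqref{eq:RDP_feedback}. As a byproduct, $\JNkappa^\star(x^+,\theta)\le\JNkappa^\star(x,\theta)\le\bar{\mathcal J}$, so $\mathbb{X}_{\bar{\mathcal J}}$ is positively invariant under the nominal closed loop. This justifies applying the inequality recursively.
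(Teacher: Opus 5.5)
Your proposal is correct in substance. The first step matches the paper: freeze the optimal artificial setpoint so the offset cost cancels, and use local analogues of Propositions~\ref{prop:cost_controllable}--\ref{prop:finite_tail}. The difference is how you handle the locality of Assumption~\ref{ass:stable_feedback}.

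The paper uses a case distinction along the optimal prediction. It shows that for all $k\geq N_0=\max\{0,(\bar{\mathcal{J}}-\bar{\gamma}c_{\mathrm{loc}})/c_{\mathrm{loc}}\}$ the tail values satisfy the \emph{unmodified} local bound with constant $\bar{\gamma}$:
\begin{itemize}
\item either some earlier predicted state was local, so the tail value is at most $\bar{\gamma}c_{\mathrm{loc}}$;
\item or none was, so the tail value is at most $\bar{\mathcal{J}}-kc_{\mathrm{loc}}\leq\bar{\gamma}c_{\mathrm{loc}}$.
\end{itemize}
The nominal argument is then run on the last $N-N_0$ steps. This gives the additive condition~\eqref{eq:local_horizon}, which is linear in $\bar{\mathcal{J}}$.

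You instead inflate the constant uniformly to $\gamma_{\bar{\mathcal{J}}}=\max\{\bar{\gamma},\bar{\mathcal{J}}/\bar{c}\}$. This is the trick the paper reserves for Proposition~\ref{prop:artificial_bound} in Corollary~\ref{cor:stable_artificial_feedback}. It is simpler and suffices for existence of $N$ and $\alpha>0$. The price is a horizon of order $(\bar{\mathcal{J}}/\bar{c})\log(\bar{\mathcal{J}}/\bar{c})$ rather than linear in $\bar{\mathcal{J}}$.

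Be careful with the sentence ``the LP of Theorem~\ref{thm:stable_artificial} applies with this constant''. The crude bound $\bar{\mathcal{J}}$ is justified, via the principle of optimality, only for tail values of the \emph{current} optimal prediction. The LP behind~\eqref{eq:alpha} also needs:
\begin{itemize}
\item cost controllability at $x^\star(k)$ with horizon $N-k+1$, which is not a tail value;
\item the approximate-CLF step at $x^\star(N)$.
\end{itemize}
Both require those states to be local, so $\gamma_{\bar{\mathcal{J}}}$ cannot literally be plugged into~\eqref{eq:alpha}.

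Your own switching construction closes this gap. Let $\ell_0:=\ell(x,\pi_\kappa(x,\theta),x_{\mathrm{s}},u_{\mathrm{s}})$, and let $T_k$ denote the tail sums (stage costs from step $k$ on, plus terminal cost). They satisfy $T_{k+1}\leq(1-1/\gamma_{\bar{\mathcal{J}}})T_k$ with $T_0\leq\min\{\bar{\mathcal{J}},\gamma_{\bar{\mathcal{J}}}\ell_0\}$. Hence, for $N$ large, $x^\star(N-1)$ is local. Switching to $\kappa$ there gives $\alpha\geq1-\bar{\gamma}\gamma_{\bar{\mathcal{J}}}(1-1/\gamma_{\bar{\mathcal{J}}})^{N-1}$, with no approximate CLF needed. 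The counting argument alone (at most $N_0$ non-local indices) would not suffice, since you need a local index whose stage cost is small relative to $\ell_0$.

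Two minor inaccuracies:
\begin{itemize}
\item Along the $\kappa$-rollout, Assumption~\ref{ass:stable_feedback} only gives $\|x_k-x_{\mathrm{s}}\|_Q^2\leq\Cell\rho^kc_{\mathrm{loc}}$. Since $\Cell\geq1$, the rollout need not stay in $\mathbb{S}_{\mathrm{loc}}$. This is harmless here, because the assumption bounds the whole rollout from its initial state.
\item The single tail estimate you cite yields $\epsilon_{\mathrm{f}}=\max\{\Cell\rho^M+(1-\omega)/\omega,0\}$, not the sharper value in~\eqref{eq:epsilon_f}.
\end{itemize}
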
 
\begin{corollary}
\label{cor:stable_artificial_feedback}
 Assumptions~\ref{ass:regularity}\ref{item:ass_steady_state}, \ref{ass:steady_regular}\ref{item:ass_unique}, \ref{item:ass_convex}, and \ref{ass:stable_feedback} hold. 
Suppose $N,M,\omega,\bar{\mathcal{J}}$ are chosen according to Theorem~\ref{thm:stable_artificial_feedback}. 
For any $(x_0,\theta)\in\mathbb{X}_{\bar{\mathcal{J}}}$, the optimal steady-state $x_{\mathrm{rd},\theta}$ is exponentially stable for the nominal closed loop~\eqref{eq:nominal_closed_loop_feedback}.
\end{corollary}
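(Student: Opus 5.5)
The argument mirrors the proof of Corollary~\ref{corol:nominal}; the only new ingredient is that everything has to stay inside the sublevel set $\mathbb{X}_{\bar{\mathcal{J}}}$, where Theorem~\ref{thm:stable_artificial_feedback} applies.

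\emph{Step 1: invariance and bounds on $\JNkappa^\star$.} Because $\alpha>0$, Inequality~\eqref{eq:RDP_feedback} gives $\JNkappa^\star(x_{k+1},\theta)\leq \JNkappa^\star(x_k,\theta)\leq\bar{\mathcal{J}}$. Hence $\mathbb{X}_{\bar{\mathcal{J}}}$ is positively invariant for~\eqref{eq:nominal_closed_loop_feedback}, and the decrease holds at every time. Next I need quadratic bounds on $\JNkappa^\star$ relative to $\|x-x_{\mathrm{rd},\theta}\|_Q^2$ on $\mathbb{X}_{\bar{\mathcal{J}}}$.
\begin{itemize}
\item \emph{Lower bound.} $\JNkappa^\star(x,\theta)\geq \ell(x,u^{\star}_0,x_{\mathrm{s}}^{\star,\kappa},u_{\mathrm{s}}^{\star,\kappa})\geq \|x-x^{\star,\kappa}_{\mathrm{s},x,\theta}\|_Q^2$.
\item \emph{Upper bound.} The candidate with setpoint $x_{\mathrm{rd},\theta}$ (which lies in $\bar{\mathbb{S}}$ under the modified Asm.~\ref{ass:steady_regular}\ref{item:ass_regularity_steady_state_1}) has zero output penalty. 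When $\|x-x_{\mathrm{rd},\theta}\|_Q^2\leq c_{\mathrm{loc}}$, rolling out $\kappa$ over $N+M$ steps and using Asm.~\ref{ass:stable_feedback} gives $\JNkappa^\star\leq \overline{\gamma}\|x-x_{\mathrm{rd},\theta}\|_Q^2$ with $\overline{\gamma}=\Cell\max\{1,\omega\}/(1-\rho)$. Outside this ball, $\JNkappa^\star\leq\bar{\mathcal{J}}\leq (\bar{\mathcal{J}}/c_{\mathrm{loc}})\|x-x_{\mathrm{rd},\theta}\|_Q^2$. So there is a uniform constant $\gamma$ with $\JNkappa^\star(x,\theta)\leq \gamma\|x-x_{\mathrm{rd},\theta}\|_Q^2$ on $\mathbb{X}_{\bar{\mathcal{J}}}$.
\end{itemize}

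\emph{Step 2: the artificial setpoint tracks the optimal one.} I need an analogue of Proposition~\ref{prop:artificial_bound} for the $\kappa$-formulation: $\|x-x_{\mathrm{s},x,\theta}^{\star,\kappa}\|_Q^2\geq\overline{a}\|x_{\mathrm{s},x,\theta}^{\star,\kappa}-x_{\mathrm{rd},\theta}\|_Q^2$ on $\mathbb{X}_{\bar{\mathcal{J}}}$. I expect this to be the main obstacle, because the open-loop stability used in Proposition~\ref{prop:artificial_bound} is no longer available. I would use the argument of \cite[Prop.~1]{soloperto2021nonlinear}.
\begin{itemize}
\item Suppose $x$ is close to $x_{\mathrm{s}}^\star$ in the sense $\|x-x_{\mathrm{s}}^\star\|_Q^2\leq c_{\mathrm{loc}}$.
\item Consider the shifted setpoint $\tilde y=\beta y_{\mathrm{s}}^\star+(1-\beta)y_{\mathrm{d}}$. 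By Asm.~\ref{ass:steady_regular}\ref{item:ass_convex} it is feasible in $\bar{\mathbb{S}}$, and by the Lipschitz bound~\eqref{eq:g_Lipschitz} its state and input are within $L_{\mathrm{\gy}}(1-\beta)^2\|y_{\mathrm{s}}^\star-y_{\mathrm{d}}\|_T^2$ of $(x_{\mathrm{s}}^\star,u_{\mathrm{s}}^\star)$.
\item Apply $\kappa$ toward $\tilde y$. This is admissible for $\beta$ near $1$ because $x$ stays in the local region, where the requirement $\bar{\mathbb{U}}\subseteq\mathrm{int}(\mathbb{U})$ guarantees admissibility. The candidate cost is then bounded by $2\overline{\gamma}(\|x-x^\star_{\mathrm{s}}\|_Q^2+\|x_{\mathrm{s}}^\star-\tilde{x}\|_Q^2)+\beta^2\|y_{\mathrm{s}}^\star-y_{\mathrm{d}}\|_T^2$.
\item Optimality of the minimizer forces this to be at least $\|y_{\mathrm{s}}^\star-y_{\mathrm{d}}\|_T^2$. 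Optimizing over $\beta$ yields a lower bound of $\|x-x_{\mathrm{s}}^\star\|_Q^2$ by a multiple of $\|y_{\mathrm{s}}^\star-y_{\mathrm{d}}\|_T^2$. Combined with~\eqref{eq:g_Lipschitz}, this bounds $\|x_{\mathrm{s}}^\star-x_{\mathrm{rd},\theta}\|_Q^2$.
\end{itemize}
If instead $\|x-x_{\mathrm{s}}^\star\|_Q^2>c_{\mathrm{loc}}$, the bound follows directly, since $\|x_{\mathrm{s}}^\star-x_{\mathrm{rd},\theta}\|_Q^2$ is uniformly bounded by Asm.~\ref{ass:regularity}\ref{item:ass_steady_state}.

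\emph{Step 3: conclusion.} Combining the steps gives
\begin{equation*}
\|x-x_{\mathrm{rd},\theta}\|_Q^2\leq 2\|x-x_{\mathrm{s}}^\star\|_Q^2+2\|x_{\mathrm{s}}^\star-x_{\mathrm{rd},\theta}\|_Q^2\leq 2(1+1/\overline{a})\|x-x_{\mathrm{s}}^\star\|_Q^2.
\end{equation*}
Hence $\ell(x,\pi_\kappa,x_{\mathrm{s}}^\star,u_{\mathrm{s}}^\star)\geq c\|x-x_{\mathrm{rd},\theta}\|_Q^2$ for a constant $c>0$, and therefore, by Step~1, $\ell\geq (c/\gamma)\JNkappa^\star(x,\theta)$. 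Substituting into~\eqref{eq:RDP_feedback} gives
\begin{equation*}
\JNkappa^\star(x_{k+1},\theta)\leq(1-\alpha c/\gamma)\JNkappa^\star(x_k,\theta).
\end{equation*}
This is geometric decay of $\JNkappa^\star$ along the closed loop. Together with the lower bound $\JNkappa^\star\geq \tfrac{c}{2}\|x-x_{\mathrm{rd},\theta}\|_Q^2$, which follows from the same estimates, it gives exponential stability of $x_{\mathrm{rd},\theta}$ for all $(x_0,\theta)\in\mathbb{X}_{\bar{\mathcal{J}}}$.
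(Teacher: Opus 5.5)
Your overall structure matches the paper's proof. The paper gets positive invariance of $\mathbb{X}_{\bar{\mathcal{J}}}$ from \eqref{eq:RDP_feedback}, enlarges the cost-controllability constant to $\max\{\overline{\gamma},\bar{\mathcal{J}}/c_{\mathrm{loc}}\}$ by a case distinction, proves a regional version of Proposition~\ref{prop:artificial_bound}, and then runs the Lyapunov argument of Corollary~\ref{corol:nominal}. There is, however, a genuine gap: you use the feasible-target condition, Assumption~\ref{ass:steady_regular}\ref{item:ass_regularity_steady_state_1} ($y_{\mathrm{rd},\theta}=y_{\mathrm{d}}$), which the corollary does not assume. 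It enters in three places. First, your upper bound relies on ``zero output penalty''. Second, in Step~2, Assumption~\ref{ass:steady_regular}\ref{item:ass_convex} only makes $\beta y_{\mathrm{s}}^\star+(1-\beta)y_{\mathrm{d}}$ feasible if $y_{\mathrm{d}}$ is itself a feasible steady output. Third, Step~3 relies on geometric decay of $\JNkappa^\star$. Without a feasible target, Step~3 is simply false. We have $\JNkappa^\star(x,\theta)\geq\|y^{\star,\kappa}_{\mathrm{s},x,\theta}-y_{\mathrm{d}}\|_T^2\geq\|y_{\mathrm{rd},\theta}-y_{\mathrm{d}}\|_T^2$, which is strictly positive whenever $y_{\mathrm{d}}$ is infeasible. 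So $\JNkappa^\star$ can neither contract to zero nor be bounded by $\gamma\|x-x_{\mathrm{rd},\theta}\|_Q^2$ near $x_{\mathrm{rd},\theta}$. Yet convergence to the best feasible setpoint in exactly this situation is what the corollary asserts.

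The missing device is the paper's shifted function $V_\kappa(x,\theta)=\JNkappa^\star(x,\theta)-\|y_{\mathrm{rd},\theta}-y_{\mathrm{d}}\|_T^2$. It has the same decrease as $\JNkappa^\star$, since $\theta$ is constant. It satisfies $V_\kappa\geq\ell(x,\pi_\kappa(x,\theta),x^{\star,\kappa}_{\mathrm{s},x,\theta},u^{\star,\kappa}_{\mathrm{s},x,\theta})$, because $y_{\mathrm{rd},\theta}$ minimizes the offset cost. Your two-case upper bound carries over: use the candidate $(x_{\mathrm{rd},\theta},u_{\mathrm{rd},\theta},y_{\mathrm{rd},\theta})$ near $x_{\mathrm{rd},\theta}$, and $V_\kappa\leq\JNkappa^\star\leq\bar{\mathcal{J}}$ otherwise. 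In Step~2, combine toward $y_{\mathrm{rd},\theta}$, i.e.\ take $\tilde y=\beta y_{\mathrm{s}}^\star+(1-\beta)y_{\mathrm{rd},\theta}$. Then replace your identity $\|\tilde y-y_{\mathrm{d}}\|_T^2=\beta^2\|y_{\mathrm{s}}^\star-y_{\mathrm{d}}\|_T^2$ by $\|\tilde y-y_{\mathrm{d}}\|_T^2-\|y_{\mathrm{s}}^\star-y_{\mathrm{d}}\|_T^2\leq-(1-\beta^2)\|y_{\mathrm{s}}^\star-y_{\mathrm{rd},\theta}\|_T^2$. This follows from first-order optimality of $y_{\mathrm{rd},\theta}$ over the convex set of feasible outputs, and with it the rest of your estimates go through.

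Two smaller repairs are needed in Step~2. Splitting exactly at $\|x-x_{\mathrm{s}}^\star\|_Q^2\leq c_{\mathrm{loc}}$ leaves no room to shift the setpoint while keeping $(x,\theta,\tilde x_{\mathrm{s}},\tilde u_{\mathrm{s}})\in\mathbb{S}_{\mathrm{loc}}$ with a $\beta$ independent of $x$. Split at a fraction of $c_{\mathrm{loc}}$ instead, and fix $\beta$ using the uniform bound on steady outputs. Also, what matters there is membership in $\mathbb{S}_{\mathrm{loc}}$, not input admissibility, which is automatic because $\kappa$ maps into $\mathbb{U}$. Your far case, which uses uniform boundedness of $\bar{\mathbb{S}}(\theta)$, is a clean way to carry out the case distinction the paper invokes when extending Proposition~\ref{prop:artificial_bound}.
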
 
Compared to Theorem~\ref{thm:stable_artificial}, the length of the horizon $N$ needs to be increased by a factor that scales linearly in $\bar{\mathcal{J}}>0$, the desired sublevel set; see~\eqref{eq:local_horizon} in Appendix~\ref{app:regional}. 
More discussions regarding the role of the horizon $N$ for the region of attraction can be found in~\cite[Sec.~4.1]{kohler2021dynamic}.

\subsection{Inherent robustness}
The following theorem extends the robustness analysis from Theorem~\ref{thm:robust_stability_global}, assuming a sufficiently small bound on the prediction error.
\begin{theorem}
\label{thm:robust_stability_regional_feedback}
Let Assumptions~\ref{ass:regularity}, \ref{ass:steady_regular}, and \ref{ass:stable_feedback} hold. 
Suppose $M,N,\omega,\bar{\mathcal{J}}$ are chosen according to Theorem~\ref{thm:stable_artificial_feedback}. 
There exist \change{constants} $\rho_{\mathrm{V}}\in(0,1)$, $c_{\mathrm{V}}>0$, such that 
for any initial condition $(x_0,\hat{\theta}_0)\in\mathbb{X}_{\bar{\mathcal{J}}}$, 
 any parameter estimates $\hat{\theta}_k\in\Theta$, parameters
$\theta_k\in\Theta$, noise $v_k\in\mathbb{R}^{\nx}$, disturbances $w_k\in\mathbb{R}^{\nw}$ satisfying 
\begin{align}
\label{eq:robust_feedback_bound_mismatch}
\|\tilde{w}_k+\tilde{x}_{1|k}\|^2+\|\hat{\theta}_{k+1}-\hat{\theta}_k\|^2\leq \dfrac{\bar{\mathcal{J}}}{c_{\mathrm{V}}(1-\rho_{\mathrm{V}})}, ~\forall k\in\mathbb{I}_{\geq 0},
\end{align}
the closed-loop system~\eqref{eq:closed_loop_adaptive_feedback} satisfies $(\hat{x}_k,\hat{\theta}_k)\in\mathbb{X}_{\bar{\mathcal{J}}}$ and 
\begin{align}
\label{eq:Lyap_decrease_robust_feedback}
\JNkappa^\star(\hat{x}_{k+1},\hat{\theta}_{k+1})\leq &\rho_{\mathrm{V}} \JNkappa^\star(\hat{x}_k,\hat{\theta}_k)\\
&+c_{\mathrm{V}}\left(\|\tilde{w}_k+\tilde{x}_{1|k}\|^2+\|\hat{\theta}_{k+1}-\hat{\theta}_k\|^2\right).\nonumber
\end{align} 
\end{theorem}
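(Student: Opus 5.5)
The proof follows the argument of Theorem~\ref{thm:robust_stability_global}, with a sublevel-set induction added to keep the estimates $(\hat{x}_k,\hat{\theta}_k)$ inside $\mathbb{X}_{\bar{\mathcal{J}}}$, where Theorem~\ref{thm:stable_artificial_feedback} applies. We proceed by induction on $k$. Suppose $(\hat{x}_k,\hat{\theta}_k)\in\mathbb{X}_{\bar{\mathcal{J}}}$. By Assumption~\ref{ass:linear_param} and the definitions~\eqref{eq:LMS_tilde}, the next measured state is
\begin{align*}
\hat{x}_{k+1}=\fw(\hat{x}_k,u_k,\hat{\theta}_k,0)+\tilde{x}_{1|k}+\tilde{w}_k .
\end{align*}
So it equals the nominal certainty-equivalent successor plus the perturbation $e_k:=\tilde{w}_k+\tilde{x}_{1|k}$.

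\textbf{Step 1 (nominal decrease).} Apply Theorem~\ref{thm:stable_artificial_feedback} at $(\hat{x}_k,\hat{\theta}_k)$. Together with the quadratic lower bound $\ell\geq\|x-x_{\mathrm{s}}\|_Q^2$, Proposition~\ref{prop:artificial_bound} (regional analogue), and an upper bound $\JNkappa^\star\leq\gamma\|x-x_{\mathrm{s}}^\star\|_Q^2+\|y_{\mathrm{s}}^\star-y_{\mathrm{d}}\|_T^2$, this gives
\begin{align*}
\JNkappa^\star(\hat{x}^+,\hat{\theta}_k)\leq\tilde{\rho}\,\JNkappa^\star(\hat{x}_k,\hat{\theta}_k),\qquad \tilde{\rho}\in(0,1),
\end{align*}
where $\hat{x}^+$ is the nominal successor. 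The upper bound comes from the local cost-controllability bound, which holds on $\mathbb{X}_{\bar{\mathcal{J}}}$ because the optimal costs there are bounded by $\bar{\mathcal{J}}$, exactly as in the corollary's proof. The term $\|y_{\mathrm{s}}^\star-y_{\mathrm{d}}\|^2$ is controlled by the setpoint-offset argument using Assumption~\ref{ass:steady_regular}\ref{item:ass_convex}. I will take this part verbatim from the proof of Corollary~\ref{cor:stable_artificial_feedback}.

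\textbf{Step 2 (continuity in $x$ and $\theta$).} I need
\begin{align*}
\JNkappa^\star(x+e,\theta')\leq(1+\epsilon)\JNkappa^\star(x,\theta)+c_\epsilon\bigl(\|e\|^2+\|\theta'-\theta\|^2\bigr).
\end{align*}
The construction: keep the old optimal input sequence and shift the artificial setpoint to a feasible one in $\bar{\mathbb{S}}(\theta')$, using Assumption~\ref{ass:steady_regular}\ref{item:ass_unique},\ref{item:ass_regularity_steady_state_2}. Then bound the state deviation along the horizon $N+M$ by Lipschitz constants ($L_{\mathrm{f}}$, $L_\kappa$, and Lipschitz continuity of $G$ in $\theta$ on the compact set). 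Finally apply the inequality $(a+b)^2\leq(1+\epsilon)a^2+(1+1/\epsilon)b^2$.

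\textbf{This is the main obstacle.} The rollout of $\kappa$ inside $\Vfkappa$ is only guaranteed to behave well on $\mathbb{S}_{\mathrm{loc}}$, and the perturbed terminal state may leave it. My first attempt is to use the fact that Lipschitz continuity of $f,\kappa,\ell$ over finitely many steps ($M$ fixed) gives a quadratic bound on the cost difference without needing stability at all. Only the setpoint shift needs $\bar{\mathbb{U}}\subseteq\mathrm{int}(\mathbb{U})$, which keeps $\kappa$'s inputs admissible; saturation is implicit since $\kappa$ maps into $\mathbb{U}$. A bounded-state argument then follows from compactness of the sublevel set.

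\textbf{Step 3 (combine and close the induction).} Choose $\epsilon$ with $\rho_{\mathrm{V}}:=(1+\epsilon)\tilde{\rho}<1$ and set $c_{\mathrm{V}}:=c_\epsilon$. This yields~\eqref{eq:Lyap_decrease_robust_feedback}. Condition~\eqref{eq:robust_feedback_bound_mismatch} then gives
\begin{align*}
\JNkappa^\star(\hat{x}_{k+1},\hat{\theta}_{k+1})\leq\rho_{\mathrm{V}}\bar{\mathcal{J}}+c_{\mathrm{V}}\,\frac{\bar{\mathcal{J}}}{c_{\mathrm{V}}(1-\rho_{\mathrm{V}})}.
\end{align*}
The bound in~\eqref{eq:robust_feedback_bound_mismatch} is generous: as written, $\rho_{\mathrm{V}}\bar{\mathcal{J}}+\bar{\mathcal{J}}/(1-\rho_{\mathrm{V}})$ exceeds $\bar{\mathcal{J}}$. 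To make invariance work, I will run Steps 1--2 on a larger sublevel set $\mathbb{X}_{c\bar{\mathcal{J}}}$, with $N$ chosen for that larger level. Alternatively, I will absorb the factor into the definition of the constants, which is allowed since only existence of $\rho_{\mathrm{V}},c_{\mathrm{V}}$ is claimed. Either way, invariance of the sublevel set, and hence $(\hat{x}_k,\hat{\theta}_k)\in\mathbb{X}_{\bar{\mathcal{J}}}$ for all $k$, follows by induction.
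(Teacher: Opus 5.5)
Your proof follows the paper's route. You combine four ingredients. First, the regional nominal decrease from Theorem~\ref{thm:stable_artificial_feedback}, turned into a contraction using the bounds from the proof of Corollary~\ref{cor:stable_artificial_feedback}. Your offset step needs $y_{\mathrm{d}}$ to be a feasible output, i.e.\ Assumption~\ref{ass:steady_regular}\ref{item:ass_regularity_steady_state_1}; without it, $\JNkappa^\star$ does not vanish at the optimal setpoint and no $\rho_{\mathrm{V}}<1$ exists. Second, the continuity bound of Lemma~\ref{lemma:continuity_opt_cost}, rebuilt for $\JNkappa^\star$ from finite-horizon Lipschitz estimates that include $L_\kappa$; as you suspected, no stability of the rollout is needed. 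Third, Young's inequality to combine the two. Fourth, induction on the sublevel set.

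You are also right that \eqref{eq:robust_feedback_bound_mismatch} as printed does not give invariance. The paper's one-line invariance claim implicitly uses the bound $(1-\rho_{\mathrm{V}})\bar{\mathcal{J}}/c_{\mathrm{V}}$.

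Only your second repair works. Let $(\rho_0,c_0)$ be the pair obtained in your Steps 1--3, and set $\rho_{\mathrm{V}}=\rho_0$, $c_{\mathrm{V}}=c_0/(1-\rho_0)^2$. The hypothesis then reads $\|\tilde{w}_k+\tilde{x}_{1|k}\|^2+\|\hat{\theta}_{k+1}-\hat{\theta}_k\|^2\leq(1-\rho_0)\bar{\mathcal{J}}/c_0$. The decrease with the sharper constant $c_0$ gives $\JNkappa^\star(\hat{x}_{k+1},\hat{\theta}_{k+1})\leq\rho_0\bar{\mathcal{J}}+(1-\rho_0)\bar{\mathcal{J}}=\bar{\mathcal{J}}$, which closes the induction. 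The claimed inequality with $c_{\mathrm{V}}\geq c_0$ still holds.

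The first repair (a larger level $c\bar{\mathcal{J}}$ with a larger $N$) is not admissible. The hypothesis fixes $N$ for $\bar{\mathcal{J}}$, and that repair would at best give invariance of a set larger than the claimed $\mathbb{X}_{\bar{\mathcal{J}}}$. So drop the ``either way'' and commit to the rescaling of $c_{\mathrm{V}}$.
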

\change{Inequality~\eqref{eq:robust_feedback_bound_mismatch} requires a sufficiently small uncertainty.
Owing to the local stability assumption (Assumption~\ref{ass:stable_feedback}), the admissible uncertainty cannot be increased arbitrarily. 
This robustness margin can be improved by increasing the rollout horizon $M$ and choosing the scaling $\omega$ close to one. A detailed analysis of how the design choices $N,M,\omega,\bar{\mathcal{J}}$ influence the constants can be found in \ifbool{arxiv}{Appendix~\ref{app:constants}}{\cite[App.~G]{adaptive_arxiv}}.}

\subsection{Online adaptation}
The following lemma addresses the suitable choice of the parameter gain $\Gamma\succ 0$.
\begin{lemma}
\label{lemma:bounded_feedback}
Let Assumptions~\ref{ass:regularity}\ref{item:ass_steady_state}--\ref{item:ass_Lipschitz_differentiable} hold. 
\change{Consider a} known compact set $\mathbb{V}$ and a constant $\bar{\mathcal{J}}>0$, such that $v_k\in\mathbb{V}$, $(\hat{x}_k,\hat{\theta}_k)\in\mathbb{X}_{\bar{\mathcal{J}}}$, $\forall k\in\mathbb{I}_{\geq 0}$. 
\change{Then, there exists a sufficiently small gain $\Gamma\succ 0$ satisfying Assumption~\ref{ass:param_gain}.}
\end{lemma}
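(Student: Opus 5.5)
The plan is to show that the regressor $\hat{\Phi}_k=G(\hat{x}_k,u_k,0)$ takes values in a compact set. Then $\Gamma:=\gamma I$ with $\gamma>0$ small enough gives $\hat{\Phi}_k\Gamma\hat{\Phi}_k^\top=\gamma\hat{\Phi}_k\hat{\Phi}_k^\top\preceq \gamma\bar{\Phi}^2 I\preceq I$. Here $\bar{\Phi}$ is a uniform bound on $\|\hat{\Phi}_k\|$, so it suffices to take $\gamma\leq 1/\bar{\Phi}^2$ (any $\gamma>0$ if $\bar{\Phi}=0$). Since $u_k\in\mathbb{U}$ with $\mathbb{U}$ compact, and $G$ is continuous (Asm.~\ref{ass:regularity}\ref{item:ass_Lipschitz_differentiable} together with Asm.~\ref{ass:linear_param}), a bound on $\|\hat{\Phi}_k\|$ follows once $\hat{x}_k$ is confined to a compact set independent of $k$.

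The core step is to prove that the sublevel set $\mathbb{X}_{\bar{\mathcal{J}}}$ has bounded $x$-projection. By hypothesis $(\hat{x}_k,\hat{\theta}_k)\in\mathbb{X}_{\bar{\mathcal{J}}}$, so $\JNkappa^\star(\hat{x}_k,\hat{\theta}_k)\leq\bar{\mathcal{J}}$. The cost $\JNkappa$ is a sum of nonnegative terms, and its first stage cost is evaluated at the initial state. Hence, for the optimal artificial setpoint $(x^{\star,\kappa}_{\mathrm{s}},u^{\star,\kappa}_{\mathrm{s}},y^{\star,\kappa}_{\mathrm{s}})\in\bar{\mathbb{S}}(\hat{\theta}_k)$,
\begin{align*}
\|\hat{x}_k-x^{\star,\kappa}_{\mathrm{s}}\|_Q^2\leq \ell(\hat{x}_k,u_{0}^{\star,\kappa},x^{\star,\kappa}_{\mathrm{s}},u^{\star,\kappa}_{\mathrm{s}})\leq \JNkappa^\star(\hat{x}_k,\hat{\theta}_k)\leq\bar{\mathcal{J}}.
\end{align*}
By Asm.~\ref{ass:regularity}\ref{item:ass_steady_state}, $\bar{\mathbb{S}}(\theta)\subseteq\mathbb{S}(\theta)$ is uniformly bounded over $\theta\in\Theta$. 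So there is $\bar{x}_{\mathrm{s}}$ with $\|x^{\star,\kappa}_{\mathrm{s}}\|\leq\bar{x}_{\mathrm{s}}$, and because $Q\succ 0$ we obtain
\begin{align*}
\|\hat{x}_k\|\leq \bar{x}_{\mathrm{s}}+\sqrt{\bar{\mathcal{J}}/\sigma_{\min}(Q)}=:\bar{x}.
\end{align*}
This bound holds uniformly in $k\in\mathbb{I}_{\geq 0}$.

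With $\hat{x}_k$ in the compact ball $\{\|x\|\leq\bar{x}\}$ and $u_k\in\mathbb{U}$, continuity of $G(\cdot,\cdot,0)$ gives $\bar{\Phi}:=\max_{\|x\|\leq\bar{x},u\in\mathbb{U}}\|G(x,u,0)\|<\infty$. Alternatively, one can bound $\|G(x,u,0)\|$ directly via the Lipschitz constant $L_{\mathrm{f}}$ in $\theta$, since $G\theta=f(x,u,\theta,0)-f(x,u,0,0)$. Choosing $\Gamma=\gamma I$ with $0<\gamma\leq 1/\bar{\Phi}^2$ then establishes Assumption~\ref{ass:param_gain}.

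The set $\mathbb{V}$ is not actually needed for bounding $\hat{\Phi}_k$, because the regressor is evaluated at $\hat{x}_k$, which is controlled directly through $\mathbb{X}_{\bar{\mathcal{J}}}$. It would only matter if one also wanted bounds on the true state $x_k=\hat{x}_k-v_k$. The only real subtlety is that no stability property of $\kappa$ is needed here; the argument rests solely on the lower bound of the cost by the initial stage cost and on the uniform boundedness of the setpoint set.
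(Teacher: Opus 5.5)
Your proposal is correct and follows essentially the paper's argument. The sublevel bound $\JNkappa^\star(\hat{x}_k,\hat{\theta}_k)\leq\bar{\mathcal{J}}$, the first-stage-cost lower bound and uniform boundedness of the setpoints confine the state to a compact set, on which the regressor is bounded, so a sufficiently small $\Gamma$ satisfies Assumption~\ref{ass:param_gain}; you take $\Gamma=\gamma I$, while the paper uses the trace-maximizing program~\eqref{eq:compute_gamma}. The only difference is that the paper bounds the true state $x_k\in\mathcal{R}$ and re-adds $v_k\in\mathbb{V}$, whereas you bound $\hat{x}_k$ directly, which correctly shows that $\mathbb{V}$ is not actually needed.
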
 
\change{The adaptation gain $\Gamma$ provided in Appendix~\ref{app:regional} depends on the sublevel set $\mathbb{X}_{\bar{\mathcal{J}}}$. 
A larger region of attraction (larger $\bar{\mathcal{J}}$) leads to more conservative choices of the adaptation gain $\Gamma$, which result in slower online adaptation.
Note that} 
Lemma~\ref{lemma:bounded_feedback} relies on positive invariance of $\mathbb{X}_{\bar{\mathcal{J}}}$, which needs to be recursively established. 
To study closed-loop properties, we consider the Lyapunov candidate function 
\begin{align}
\label{eq:V_kappa_combined}
V_\kappa(\hat{x}_k,\hat{\theta}_k,\theta_k):=\JNkappa(\hat{x}_k,\hat{\theta}_k)+\tilde{c}_{\mathrm{V}}\Vtheta(\theta_k-\hat{\theta}_k),
\end{align}
 with $\tilde{c}_{\mathrm{V}}=c_{\mathrm{V}}(1+\sigma_{\min}(\Gamma^{-1}))>0$. 
The following theorem shows that the adaptive controller addresses Objective~\ref{objective_2}. 
\begin{theorem}
\label{thm:adaptive_feedback}
Let Assumptions~\ref{ass:regularity}, \ref{ass:linear_param}, \ref{ass:steady_regular}, and \ref{ass:stable_feedback} hold. 
Consider $M,N,\omega,\bar{\mathcal{J}}$ according to Theorem~\ref{thm:stable_artificial_feedback} and $\Gamma$ according to Lemma~\ref{lemma:bounded_feedback}. 
Suppose $w_k\in\mathbb{W}$, $v_k\in\mathbb{V}$, $\theta_k\in\Theta$, $k\in\mathbb{I}_{\geq 0}$, and $(\hat{x}_0,\hat{\theta}_0)\in\mathbb{X}_{\bar{\mathcal{J}}}$ with sufficiently small sets $\mathbb{W}$, $\mathbb{V}$, $\Theta$. 
Then, the closed-loop system consisting of the dynamics~\eqref{eq:closed_loop_adaptive_feedback} with the LMS adaptation~\eqref{eq:LMS}, and noisy measurements~\eqref{eq:noise_measurement} satisfies Objective~\ref{objective_2}, i.e., there exist \change{constants} $C_1,C_2>0$, such that~\eqref{eq:desired_stability} holds.
\end{theorem}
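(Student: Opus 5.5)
The argument runs by induction on $k$, using the combined Lyapunov candidate $V_\kappa$ in~\eqref{eq:V_kappa_combined}. I read $\JNkappa$ there as the optimal cost $\JNkappa^\star$. The inductive hypothesis is $(\hat{x}_j,\hat{\theta}_j)\in\mathbb{X}_{\bar{\mathcal{J}}}$ for all $j\le k$. Under it, Lemma~\ref{lemma:bounded_feedback} guarantees that $\Gamma$ satisfies Assumption~\ref{ass:param_gain} up to time $k$, so Theorem~\ref{thm:LMS} applies at step $k$.

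\textbf{Step 1 (recursive feasibility).} It suffices to verify the smallness bound~\eqref{eq:robust_feedback_bound_mismatch} at step $k$; Theorem~\ref{thm:robust_stability_regional_feedback} then gives $(\hat{x}_{k+1},\hat{\theta}_{k+1})\in\mathbb{X}_{\bar{\mathcal{J}}}$ together with the decrease~\eqref{eq:Lyap_decrease_robust_feedback}. To verify the bound:
\begin{itemize}
\item By~\eqref{eq:LMS_step}, $\|\hat{\theta}_{k+1}-\hat{\theta}_k\|^2\le \sigma_{\max}(\Gamma)\|\tilde{x}_{1|k}+\tilde{w}_k\|^2$.
\item $\|\tilde{x}_{1|k}\|\le \|\hat{\Phi}_k\|\,\mathrm{diam}(\Theta)$, and $\|\hat{\Phi}_k\|$ is uniformly bounded on the compact set on which Lemma~\ref{lemma:bounded_feedback} operates.
\item $\|\tilde{w}_k\|$ is bounded through~\eqref{eq:LMS_w_bound} by the sizes of $\mathbb{W}$ and $\mathbb{V}$.
\end{itemize}
Hence, if $\mathbb{W}$, $\mathbb{V}$ and the diameter of $\Theta$ are sufficiently small, \eqref{eq:robust_feedback_bound_mismatch} holds for all $k$. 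This is exactly where the ``sufficiently small sets'' hypothesis enters.

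\textbf{Step 2 (combined decrease).} I add~\eqref{eq:Lyap_decrease_robust_feedback} to $\tilde{c}_{\mathrm{V}}$ times~\eqref{eq:LMS_Lyap}. The term $\|\hat{\theta}_{k+1}-\hat{\theta}_k\|^2$ is bounded via~\eqref{eq:LMS_step} and $\|\cdot\|^2\le \sigma_{\max}(\Gamma)\Vtheta(\cdot)$, giving at most $(1+\sigma_{\max}(\Gamma))\|\tilde{x}_{1|k}+\tilde{w}_k\|^2$. Then $\|\tilde{x}_{1|k}+\tilde{w}_k\|^2\le 2\|\tilde{x}_{1|k}\|^2+2\|\tilde{w}_k\|^2$. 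The term $-\tilde{c}_{\mathrm{V}}\|\tilde{x}_{1|k}\|^2$ from LMS must dominate the $\tilde{x}_{1|k}$ contribution from the MPC decrease. If the stated choice of $\tilde{c}_{\mathrm{V}}$ is not exactly large enough, I would enlarge $\tilde{c}_{\mathrm{V}}$ by a constant factor; this changes nothing structural. The result is
\begin{align*}
V_{k+1}-V_k\le& -(1-\rho_{\mathrm{V}})\JNkappa^\star(\hat{x}_k,\hat{\theta}_k)+c\|\tilde{w}_k\|^2\\
&+\tilde{c}_{\mathrm{V}}c_\theta\sqrt{\Vtheta(\Delta\theta_k)}.
\end{align*}

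\textbf{Step 3 (telescoping and conversion).} I telescope over $k\in\mathbb{I}_{[0,\Time-1]}$ and use $V\ge 0$. The initial value is bounded by $V_0\le \gamma\|\hat{x}_0-x_{\mathrm{rd},\hat{\theta}_0}\|_Q^2+\tilde{c}_{\mathrm{V}}\sigma_{\max}(\Gamma^{-1})\|\theta_0-\hat{\theta}_0\|^2$, using the analogue of Proposition~\ref{prop:cost_controllable} inside $\mathbb{X}_{\bar{\mathcal{J}}}$ (a local quadratic upper bound on the sublevel set). Then I pass from measured to true quantities with $\hat{x}_0=x_0+v_0$ and the Lipschitz bound~\eqref{eq:g_Lipschitz} on $\theta\mapsto x_{\mathrm{rd},\theta}$.

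The next task is to lower-bound $\JNkappa^\star(\hat{x}_k,\hat{\theta}_k)$ by the tracking error and the constraint violation:
\begin{itemize}
\item $\JNkappa^\star\ge \ell(\hat{x}_k,\cdot,x_{\mathrm{s}}^\star,u_{\mathrm{s}}^\star)$ already dominates $\|\hat{x}_k-x^\star_{\mathrm{s}}\|_Q^2$ and $\|\hat{x}_k\|_{\mathbb{X}}^2$, the latter because $\mathbb{X}$ is a polytope and the penalties are quadratic in the violations.
\item The regional analogue of Proposition~\ref{prop:artificial_bound} gives $\|x^\star_{\mathrm{s}}-x_{\mathrm{rd},\hat{\theta}_k}\|$ controlled by $\|\hat{x}_k-x^\star_{\mathrm{s}}\|$.
\item Assumption~\ref{ass:steady_regular}\ref{item:ass_regularity_steady_state_1} gives $y_{\mathrm{rd},\theta}=y_{\mathrm{d}}$ for all $\theta$, and Lipschitz continuity of $h$ and $\gyx,\gyu$ yields $\|y_k-y_{\mathrm{d}}\|^2\lesssim \JNkappa^\star(\hat{x}_k,\hat{\theta}_k)+\|v_k\|^2+\|\theta_k-\hat{\theta}_k\|^2$.
\end{itemize}

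\textbf{Main obstacle.} The difficulty is the residual term $\|\theta_k-\hat{\theta}_k\|^2$, since there is no persistent excitation. I would avoid needing it by tracking $y_k$ against the model output $h(x_k,u_k,\hat{\theta}_k)$. If a residual remains, I would instead use that the model steady state $\gyx(y_{\mathrm{d}},\hat{\theta}_k)$ satisfies $\hat{\theta}$-dynamics, which turns the discrepancy into the one-step prediction error $\tilde{x}_{1|k}$; that error is summable by Step 2. The parameter-variation term $\sum_k\|\Delta\theta_k\|$ enters linearly through $\sqrt{\Vtheta(\Delta\theta_k)}\le\sqrt{\sigma_{\max}(\Gamma^{-1})}\,\|\Delta\theta_k\|$. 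Collecting constants gives $C_1,C_2$ and hence~\eqref{eq:desired_stability}.
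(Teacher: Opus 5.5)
Your Steps~1--3 follow the paper's proof. Smallness of $\mathbb{W},\mathbb{V},\Theta$ yields \eqref{eq:robust_feedback_bound_mismatch}, and Theorem~\ref{thm:robust_stability_regional_feedback} then gives invariance of $\mathbb{X}_{\bar{\mathcal{J}}}$. Invariance makes Lemma~\ref{lemma:bounded_feedback} and Theorem~\ref{thm:LMS} applicable, and the rest is the telescoping argument of Theorem~\ref{thm:adaptive_global}. The paper bounds $\|\tilde{x}_{1|k}\|\le L_{\mathrm{f}}\|\theta_k-\hat{\theta}_k\|$ by global Lipschitz continuity rather than by a regressor bound; otherwise the two arguments coincide. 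Where your constants differ from the paper's (namely $\sigma_{\max}(\Gamma)$ and the extra factor in $\tilde{c}_{\mathrm{V}}$), yours are the correct ones.

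The gap is in your last step, the residual $\|\theta_k-\hat{\theta}_k\|^2$ in the output error. Neither of your remedies removes it.
\begin{itemize}
\item Comparing $y_k$ with the model output $h(x_k,u_k,\hat{\theta}_k)$ changes nothing. Objective~\ref{objective_2} concerns the true output $y_k=h(x_k,u_k,\theta_k)$, and the difference between the two is exactly the residual.
\item $\tilde{x}_{1|k}=\hat{\Phi}_k(\theta_k-\hat{\theta}_k)$ only measures the parameter error along directions excited by the regressor of the \emph{dynamics}. The residual comes from the $\theta$-dependence of $h$, which the dynamics need not see at all.
\end{itemize}
In fact no estimate can remove it. Take $x_{k+1}=0.5x_k+u_k+w_k$, $y=x+\theta$, $y_{\mathrm{d}}=0$ and $\kappa\equiv u_{\mathrm{s}}$; all hypotheses hold for any small interval $\Theta$. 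Since $\hat{\Phi}_k\equiv 0$, the estimate never moves: $\hat{\theta}_k\equiv\hat{\theta}_0$. With $w=v=0$, $\theta_k\equiv\theta\neq\hat{\theta}_0$ and $x_0=-\hat{\theta}_0$, the MPC's zero-cost solution keeps $x_k=-\hat{\theta}_0$. Hence $\|y_k-y_{\mathrm{d}}\|=|\theta-\hat{\theta}_0|$ for all $k$, so the left-hand side of \eqref{eq:desired_stability} grows linearly in $\Time$ while the right-hand side stays constant.

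The paper gets past this step only because Part~I of the proof of Theorem~\ref{thm:adaptive_global} writes $y=h(x)$ and uses $\|y_k-y_{\mathrm{d}}\|\le L_{\mathrm{h}}\|x_k-x_{\mathrm{rd},\hat{\theta}_k}\|$. That is, it tacitly takes the output map to be independent of $\theta$. What you need is therefore that restriction, not a sharper estimate. For $h=h(x,u)$ your bound closes with no residual:
\begin{itemize}
\item $y_{\mathrm{d}}=h(x_{\mathrm{rd},\hat{\theta}_k},u_{\mathrm{rd},\hat{\theta}_k})$;
\item $\|u_k-u^\star_{\mathrm{s}}\|_R^2\le\ell(\hat{x}_k,u_k,x^\star_{\mathrm{s}},u^\star_{\mathrm{s}})$;
\item $\|u^\star_{\mathrm{s}}-u_{\mathrm{rd},\hat{\theta}_k}\|_R^2\le L_{\gy}\|y^\star_{\mathrm{s}}-y_{\mathrm{d}}\|_T^2\le L_{\gy}\JNkappa^\star(\hat{x}_k,\hat{\theta}_k)$, where the last inequality uses the offset term of the cost.
\end{itemize}
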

Compared to Theorem~\ref{thm:adaptive_global}, 
this result uses additional arguments to ensure invariance of $\mathbb{X}_{\bar{\mathcal{J}}}$. 
We have a clear characterization of the region of attraction $\mathbb{X}_{\bar{\mathcal{J}}}$, e.g., we can ensure that all initial conditions close to a steady-state lie in the region of attraction~\cite{berberich2022linear_model}. 
In contrast, the restriction on the sufficiently small sets $\Theta,\mathbb{V},\mathbb{W}$ is only qualitative due to the utilization of conservative Lipschitz continuity bounds. 
The following corollary provides a more quantitative description of the permissible parameters $\Theta$ for a special case.
\begin{corollary}
\label{cor:adaptive_regional}
Let the conditions in Theorem~\ref{thm:adaptive_feedback} hold.
Suppose further that there are no disturbances, no measurement noise, and the parameters are constant, i.e., $v_k\in\mathbb{V}=\{0\}$, $w_k\in\mathbb{W}=\{0\}$, and $\theta_k=\theta$, $\forall k\in\mathbb{I}_{\geq 0 }$. 
Then, for any initial condition $V_\kappa(\hat{x}_0,\hat{\theta}_0,\theta_0)\leq \bar{\mathcal{J}}$, the closed-loop system satisfies~\eqref{eq:desired_stability} 
and 
\begin{align}
&\limsup_{\Time\rightarrow\infty}\sum_{k=0}^{\Time-1}\left[\|y_k-y_{\mathrm{rd},\theta}\|^2+\|x_k\|_{\mathbb{X}}^2\right]<\infty. 
\label{eq:bounded_error_regional}
\end{align}
\end{corollary}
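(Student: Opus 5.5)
Under the hypotheses of Corollary~\ref{cor:adaptive_regional} we have $w_k=0$, $v_k=0$, and $\Delta\theta_k=0$. Hence $\hat{x}_k=x_k$ and, by~\eqref{eq:LMS_w_bound}, $\tilde{w}_k=0$. Theorem~\ref{thm:LMS} then gives two simplified inequalities:
\begin{align*}
\Vtheta(\hat{\theta}_{k+1}-\theta)-\Vtheta(\hat{\theta}_k-\theta)&\leq -\|\tilde{x}_{1|k}\|^2,\\
\Vtheta(\hat{\theta}_{k+1}-\hat{\theta}_k)&\leq \|\tilde{x}_{1|k}\|^2.
\end{align*}
The plan is to show that $V_\kappa$ from~\eqref{eq:V_kappa_combined} is non-increasing, in fact contracting up to a nonpositive remainder, along the closed loop. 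This keeps $(\hat{x}_k,\hat{\theta}_k)\in\mathbb{X}_{\bar{\mathcal{J}}}$ recursively and makes the cumulative cost summable.

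\emph{Step 1 (induction hypothesis and one-step decrease).} Assume $V_\kappa(\hat{x}_k,\hat{\theta}_k,\theta)\leq\bar{\mathcal{J}}$. Since $\tilde{c}_{\mathrm{V}}\Vtheta\geq 0$, this gives $\JNkappa^\star(\hat{x}_k,\hat{\theta}_k)\leq\bar{\mathcal{J}}$, so the pair lies in the sublevel set. On this set, Lemma~\ref{lemma:bounded_feedback} ensures Assumption~\ref{ass:param_gain}, and the inequality~\eqref{eq:Lyap_decrease_robust_feedback} holds for the current step. That inequality is a one-step statement derived inside the sublevel set, so I expect to be able to invoke it without the uniform bound~\eqref{eq:robust_feedback_bound_mismatch}; this is the point to check. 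Using $\|\hat{\theta}_{k+1}-\hat{\theta}_k\|^2\leq \sigma_{\min}(\Gamma^{-1})^{-1}\Vtheta(\hat{\theta}_{k+1}-\hat{\theta}_k)\leq \sigma_{\min}(\Gamma^{-1})^{-1}\|\tilde{x}_{1|k}\|^2$, the MPC increase is bounded by $c_{\mathrm{V}}(1+\sigma_{\min}(\Gamma^{-1})^{-1})\|\tilde{x}_{1|k}\|^2$. The LMS decrease contributes $-\tilde{c}_{\mathrm{V}}\|\tilde{x}_{1|k}\|^2$. The constant $\tilde{c}_{\mathrm{V}}$ has to dominate the MPC term; with the stated choice this needs $1+\sigma_{\min}(\Gamma^{-1})\geq 1+\sigma_{\min}(\Gamma^{-1})^{-1}$, which is where I would have to reconcile the paper's definition, possibly by adjusting the constant. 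Granting this, we obtain
\begin{align*}
V_\kappa(\hat{x}_{k+1},\hat{\theta}_{k+1},\theta)\leq \rho_{\mathrm{V}}\JNkappa^\star(\hat{x}_k,\hat{\theta}_k)+\tilde{c}_{\mathrm{V}}\Vtheta(\hat{\theta}_k-\theta)\leq V_\kappa(\hat{x}_k,\hat{\theta}_k,\theta),
\end{align*}
so $V_\kappa\leq\bar{\mathcal{J}}$ propagates by induction.

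\emph{Step 2 (summation).} The display above shows $V_\kappa$ is nonincreasing with decrease at least $(1-\rho_{\mathrm{V}})\JNkappa^\star(\hat{x}_k,\hat{\theta}_k)$. Telescoping gives
\begin{align*}
\sum_{k}\JNkappa^\star(x_k,\hat{\theta}_k)\leq V_\kappa(x_0,\hat{\theta}_0,\theta)/(1-\rho_{\mathrm{V}}).
\end{align*}

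\emph{Step 3 (cost to tracking error).} This step mirrors the corresponding step in Theorem~\ref{thm:adaptive_feedback}. The stage cost bounds $\|x_k\|_{\mathbb{X}}^2$ and $\|x_k-x^{\star,\kappa}_{\mathrm{s}}\|_Q^2$ by $\JNkappa^\star$. Proposition~\ref{prop:artificial_bound}, in its regional analogue, bounds the distance of the artificial setpoint to $x_{\mathrm{rd},\hat{\theta}_k}$. Assumption~\ref{ass:steady_regular} bounds $\|x_{\mathrm{rd},\hat{\theta}_k}-x_{\mathrm{rd},\theta}\|^2$ by $L_{\mathrm{g}}\|\hat{\theta}_k-\theta\|^2$. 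Lipschitz continuity of $h$ then yields
\begin{align*}
\|y_k-y_{\mathrm{rd},\theta}\|^2\lesssim \JNkappa^\star(x_k,\hat{\theta}_k)+\|\hat{\theta}_k-\theta\|^2.
\end{align*}

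\emph{Main obstacle.} The term $\sum_k\|\hat{\theta}_k-\theta\|^2$ is not obviously summable; the LMS only gives $\sum_k\|\tilde{x}_{1|k}\|^2<\infty$. I would try first to avoid it by bounding $u_k-u_{\mathrm{rd},\theta}$ and $x_k-x_{\mathrm{rd},\theta}$ in terms of $\JNkappa^\star$ and the true prediction error $\tilde{x}_{1|k}$. If that does not close, I would fall back on the form of~\eqref{eq:desired_stability} already established in Theorem~\ref{thm:adaptive_feedback}. There the right-hand side reduces to $C_2(\|\theta-\hat{\theta}_0\|^2+\|x_0-x_{\mathrm{rd},\theta}\|^2)$ because $w=v=\Delta\theta=0$, and that bound is uniform in $K$, which yields~\eqref{eq:bounded_error_regional}.
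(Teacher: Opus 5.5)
Your Steps 1--2 match the paper's argument. With $\tilde w_k=0$, the one-step bound \eqref{eq:Lyap_decrease_robust_feedback} needs only $(\hat x_k,\hat\theta_k)\in\mathbb{X}_{\bar{\mathcal{J}}}$ at the current step; \eqref{eq:robust_feedback_bound_mismatch} is used in Theorem~\ref{thm:robust_stability_regional_feedback} only for invariance. Combined with the LMS inequalities, this gives $V_\kappa(\hat x_{k+1},\hat\theta_{k+1},\theta)-V_\kappa(\hat x_k,\hat\theta_k,\theta)\leq-(1-\rho_{\mathrm{V}})\JNkappa^\star(\hat x_k,\hat\theta_k)$, hence invariance and summability. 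Your doubt about $\tilde c_{\mathrm{V}}$ is justified. Since $\|\hat\theta_{k+1}-\hat\theta_k\|^2\leq\sigma_{\max}(\Gamma)\Vtheta(\hat\theta_{k+1}-\hat\theta_k)$, the needed constant is $c_{\mathrm{V}}(1+\sigma_{\max}(\Gamma))$. The paper's choice dominates it only if $\sigma_{\max}(\Gamma)\leq 1$, but this is a harmless slip.

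The genuine gap is Step~3, and it comes from your choice of comparison point. Routing the output error through $x_{\mathrm{rd},\theta}$ creates the term $\|x_{\mathrm{rd},\hat\theta_k}-x_{\mathrm{rd},\theta}\|^2$, of order $\|\hat\theta_k-\theta\|^2$. Nothing in Steps~1--2 controls this term, since the LMS only makes $\sum_k\|\tilde x_{1|k}\|^2$ finite, and you leave it unresolved.

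The missing idea is Assumption~\ref{ass:steady_regular}\ref{item:ass_regularity_steady_state_1}: $y_{\mathrm{rd},\hat\theta_k}=y_{\mathrm{d}}=y_{\mathrm{rd},\theta}$. So you should compare $y_k$ with the output of the \emph{estimated} optimal setpoint, giving $\|y_k-y_{\mathrm{rd},\theta}\|\leq L_{\mathrm{h}}\|x_k-x_{\mathrm{rd},\hat\theta_k}\|$ (the paper treats the output as $y=h(x)$ here). The term $\|x_k-x_{\mathrm{rd},\hat\theta_k}\|_Q^2$ is then bounded by the stage cost via the regional version of Proposition~\ref{prop:artificial_bound}. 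Together with Hoffman's lemma, this is \eqref{eq:proof_adaptive_ell_bound} with $v_k=0$: $\|x_k\|_{\mathbb{X}}^2+\|y_k-y_{\mathrm{rd},\theta}\|^2\leq c_\ell\JNkappa^\star(x_k,\hat\theta_k)$, with no parameter-error term. Your Step~2 then bounds the cumulative error by $c_\ell V_\kappa(\hat x_0,\hat\theta_0,\theta)/(1-\rho_{\mathrm{V}})\leq c_\ell\bar{\mathcal{J}}/(1-\rho_{\mathrm{V}})$, which gives \eqref{eq:bounded_error_regional}. Bounding $V_\kappa(\hat x_0,\hat\theta_0,\theta)$ as in Part~III of the proof of Theorem~\ref{thm:adaptive_global} gives \eqref{eq:desired_stability}.

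Your fallback does not repair the step. Theorem~\ref{thm:adaptive_feedback} gets invariance from the qualitative smallness of $\Theta$ through \eqref{eq:robust_feedback_bound_mismatch}. This corollary is meant to replace that requirement by the quantitative condition $V_\kappa(\hat x_0,\hat\theta_0,\theta_0)\leq\bar{\mathcal{J}}$. Moreover, that theorem's proof ends with the same Part~I estimate, so if your obstacle were real it would undermine that theorem too. Your worry is legitimate only when $h$ genuinely depends on $\theta$. A parameter entering only the output map cannot be identified from the state prediction error, and can leave a persistent offset.
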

Corollary~\ref{cor:adaptive_regional} ensures that the cumulative tracking error and the state constraint violation are \change{bounded} and thus converge to zero.
The considered bound on the Lyapunov function $V_\kappa$ highlights the interplay between the parameter estimation error $\Vtheta$ and the region of attraction characterized by $\bar{\mathcal{J}}$. 

\section{Discussion}
\label{sec:discussion}
We first discuss the properties of the proposed approach in contrast to existing approaches (Sec.~\ref{sec:discussion_compare}). 
Then, we show how the considered conditions simplify in the special case of linear dynamics (Sec.~\ref{sec:discussion_linear}).  
\subsection{Proposed design \& properties}
\label{sec:discussion_compare}
The proposed MPC design utilizes an (artificial) online computed setpoint $(x_{\mathrm{s}},u_{\mathrm{s}},y_{\mathrm{s}})$ with a quadratic offset cost~\cite{limon2018nonlinear,soloperto2021nonlinear,krupaModelPredictiveControl2024}. 
Thus, we can directly pass arbitrary target setpoints $y_{\mathrm{d}}$ to the controller. 
The prediction horizon $N$ is extended by applying the steady-state input $u_{\mathrm{s}}$ or a local feedback $\kappa$ over $M$ steps~\cite{magni2001stabilizing,kohler2021stability,bonassi2024nonlinear}. 
By choosing $M$ sufficiently long, we ensure stability without requiring an explicit design of a local CLF. 
\change{The state constraints along the prediction horizon are softened using a quadratic penalty in the stage cost $\ell$~\cite{zeilinger2014soft}, which allows for implementation of a simple certainty-equivalent approach while permitting state constraint violations.}  
Compared to a standard MPC implementation~\cite{rawlings2017model}, the complexity of the online computation and offline design are only moderately increased. 
Contrary to many competing robust and adaptive MPC approaches, we do not require any specific offline design and no re-design is required if the parameter $\theta$ or the reference $y_{\mathrm{d}}$ change during online operation. \change{Instead, the proposed approach uses a finite-horizon rollout of the nonlinear dynamics with the current parameter estimate $\hat{\theta}_k$ to implicitly approximate a parametrized CLF in a way that can be directly embedded in the MPC (Prop.~\ref{prop:finite_tail}). 
For unstable systems, this approach requires a locally stabilizing feedback $\kappa$ (Asm.~\ref{ass:stable_feedback}), which is easy to obtain for many nonlinear systems, e.g., using an LQR design online (Remark~\ref{rk:feedback}). 
Compared to existing robust and adaptive MPC approaches, this reduction in offline design complexity  comes at the price of allowing for state constraint violations.}  
In the nominal case (no disturbances, no measurement noise, constant unknown parameters),  Objective~\ref{objective_1}/\ref{objective_2} ensures that we converge to the optimal feasible setpoint, despite the potentially large parametric error (cf. Cor.~\ref{cor:adaptive_global}/\ref{cor:adaptive_regional}). 
In addition, Equation~\eqref{eq:desired_stability} shows that small noise, disturbances, and parameter variation do not have a significant effect on the closed loop. 
\ifbool{arxiv}{Below, we discuss  alternatives and possible modifications. 

\textit{Robustness considerations:}
Unmodelled dynamics and noisy measurements can yield instability in adaptive control implementations~\cite{rohrs1982robustness}. 
Although we establish robustness properties of the proposed approach,  the employment of additional robustification methods can be vital to reduce the tracking error~\cite{middleton1988design}. 
In particular, additional slowly time-varying disturbances can be compensated by additionally estimating additive parameters $\theta$~\cite{morari2012nonlinear,papadimitriou2020control}. 
High-frequency noise can be attenuated with suitably designed filters~\cite{gonccalves2016robust}. 
The least-squares adaptation could be robustified using a deadzone and normalization, though a rigorous analysis is left for future work.
 
\textit{CLF:}  
A natural alternative to the considered MPC formulation would be to compute a CLF offline and use it as a terminal cost~\cite{rawlings2017model}. 
However, computing a terminal cost that is valid for all states $x\in\mathbb{R}^{\nx}$, all setpoints $(x_{\mathrm{s}},u_{\mathrm{s}})$, and all parameters $\theta\in\Theta$ is even more challenging than computing the adaptive CLF required in classical nonlinear adaptive control~\cite{pomet1992adaptive}.  
Most adaptive MPC formulations assume a common Lyapunov function (independent of the parameters $\theta$)~\cite{tanaskovic2014adaptive,lorenzen2019robust,sinha2021adaptive}, in which case also robust methods without model adaptations can be utilized.  
The main benefit of the proposed approach is the ease of implementation without cumbersome offline design, which is achieved through an implicit approximation of a CLF using a finite-horizon rollout. 
Such finite-horizon rollouts are not only utilized in classical~\cite{magni2001stabilizing} and recent~\cite{kohler2021stability,bonassi2024nonlinear} MPC designs, but also play an important role in unifying MPC and reinforcement learning methods~\cite{bertsekas2022lessons,reiter2024ac4mpc}.

 \textit{State constraints and safety considerations:}  
In many applications, we might also be interested in obtaining pointwise in-time satisfaction of state constraints, i.e., $x_k\in\mathbb{X}$, $k\in\mathbb{I}_{\geq 0}$. 
In this case, a certainty-equivalent MPC may fail to ensure recursive feasibility. 
This problem can be circumvented by posing prior bounds on the set of possible parameters and disturbances and employing robust MPC designs~\cite{gonccalves2016robust,lopez2019adaptive,kohler2020robust,sasfi2022robust,degner2024nonlinear,sinha2021adaptive}. 
However, these methods can typically only be applied if the parametric uncertainty is quite small.}{}  
\subsection{Special case: Linear systems}
\label{sec:discussion_linear}
In the following, we discuss how the conditions simplify for linear systems 
\begin{align}
\label{eq:linear_system}
f(x,u,\theta,w)=&A_\theta x+B_\theta u+Ew+e_\theta, \\
h(x,u,\theta)=&C_\theta x+D_\theta u+f_\theta,\nonumber
\end{align}
with $A_\theta,B_\theta,e_\theta,C_\theta,D_\theta,f_\theta$ affine in $\theta$ (cf.~\cite{lorenzen2019robust}). 
We consider polytopic input constraints $\mathbb{U}$ and suppose some polytopic parameter set $\Theta$ with $\theta_k\in\Theta$ is known, 

\textit{Regularity (Asm.~\ref{ass:regularity}):} 
Condition~\ref{item:ass_parameter_set} holds, e.g., if the origin is a feasible equilibrium, i.e., $e_\theta=0$, $0\in\mathbb{U},0\in\mathbb{X}$.
Condition~\ref{item:ass_steady_state} holds trivially. 
Although linear systems are trivially Lipschitz continuous w.r.t. $(x,u,w)$, they are in fact not globally Lipschitz continuous w.r.t. $\theta$, i.e., Condition~\ref{item:ass_Lipschitz_differentiable} does not hold directly. 
However, a slightly weaker local Lipschitz condition holds trivially. 
In Appendix~\ref{app:bounded_TV}, we show how our analysis naturally generalizes to such a local Lipschitz condition, with the main caveat that the parameter variation $\Delta \theta$ needs to be sufficiently small. 
This is not surprising, given that we analyse a linear time-varying systems. 

\textit{Linear parametrization:} Assumption~\ref{ass:linear_param} regarding linear parametrization is also easy to satisfy for linear systems. 

\textit{Steady-states (Asm.~\ref{ass:steady_regular}):} The uniqueness condition (Asm.~\ref{ass:steady_regular}\ref{item:ass_unique}) reduces to a standard rank condition (cf.~\cite[Rk.~1]{limon2018nonlinear}, \cite[Lemma 1.8]{rawlings2017model}) that holds for generic linear systems. 
Convexity (Asm.~\ref{ass:steady_regular}\ref{item:ass_convex}) holds trivially. 
Assumption~\ref{ass:steady_regular}\ref{item:ass_regularity_steady_state_1} follows from linear independence constraint qualification and second order sufficient conditions~\cite[Thm.~4.1]{robinson1980strongly}. 
Assumption~\ref{ass:steady_regular}\ref{item:ass_regularity_steady_state_2} requires a feasible target $y^{\mathrm{d}}$. 

\textit{Stability/stabilizability: }
Assumption~\ref{ass:stable} reduces to $A_\theta$ Schur stable for all $\theta\in\Theta$. 
Assumption~\ref{ass:stable_feedback} holds if $(A_\theta,B_\theta)$ is stabilizable by, e.g., using the LQR $\kappa=u_{\mathrm{s}}+K_{\mathrm{LQR}}(\theta)\cdot (x-x_{\mathrm{s}})$. The restriction to a local condition $c_{\mathrm{loc}}>0$ ensures $\kappa(\cdot)\in\mathbb{U}$ given $u_{\mathrm{s}}\in\bar{\mathbb{U}}\subseteq\mathrm{int}(\mathbb{U})$. 

\textit{Implementation:} 
The MPC optimization problem~\eqref{eq:MPC} 
is a convex linearly constrained quadratic program, which can be efficiently solved. 
In the absence of state constraints, i.e., $\mathbb{X}=\mathbb{R}^n$, the terminal costs $\Vf/\Vfkappa$~\eqref{eq:V_f}/\eqref{eq:V_f_kappa} can be implemented as a quadratic function $\|x-x_s\|_{P(\hat{\theta}_k)}^2$, where $P(\hat{\theta}_k)$ is computed online.

Overall, most conditions are satisfied by generic linear systems. 
The restriction to square systems ($\ny=\nu$) to ensure uniqueness can be naturally relaxed~\cite{soloperto2021nonlinear}. 
In addition, as detailed in Appendix~\ref{app:bounded_TV}, the parameter variation $\Delta \theta$ needs to be sufficiently small, which holds trivially if we consider a linear time-invariant system.
A horizon $N,M$ and weighting $\omega$ ensuring stability (Thm.~\ref{thm:stable_artificial}/\ref{thm:stable_artificial_feedback}) can be efficiently determined based on a generalized eigenvalue~\cite{koehler2021LP,kohler2021dynamic}. 

\section{Numerical examples} 
\label{sec:num} 
First, we consider a linear open-loop stable chain of mass-spring-dampers (Sec.~\ref{sec:num_lin}).
Then, we consider a nonlinear unstable quadrotor navigating obstacles (Sec.~\ref{sec:num_drone}). 
Optimization problems are solved using Matlab with quadprog or IPOPT with CasADi~\cite{andersson2019casadi}. 
Computation times were measured on a laptop with an Intel Core i7-10750H processor (2.6 GHz), 32 GB RAM, running Windows 11, without compilation. 
All constants are provided in the open-source code: 
{\url{https://github.com/KohlerJohannes/Adaptive}} 

\subsection{Linear stable mass-spring-damper chain}
\label{sec:num_lin}
\begin{figure}[t]
\includegraphics[width=0.45\textwidth]{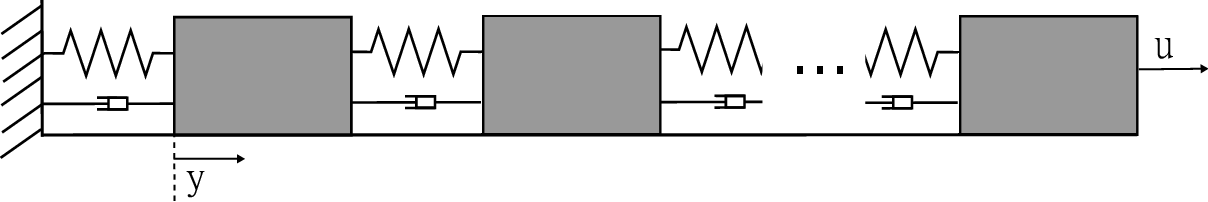}
\vspace{-2mm}
\caption{Chain of masses connected by springs and dampers.}
\label{fig:illustration}
\end{figure}
We first consider a linear system adapted from~\cite{koehler2021LP}:
We have $10$ series-connected mass-spring-dampers ($\nx=20$) with one actuator ($\nu=1$) and the output $y$ corresponding to the position of the first mass ($\ny=1$), see Figure~\ref{fig:illustration}.
We have bounded inputs $\mathbb{U}=[-25,25]$ and a soft state constraint $y\leq 0.7$. 
The system is open-loop stable, under-actuated, and non-minimum-phase. 

\textit{Adaptive MPC design:} 
We have given an initial parameter estimate and the true (unknown) physical parameters may differ by up to $\pm 50\%$. 
The linear dynamics are exactly discretized with a sampling time of $500$~[ms] and we treat all entries in the resulting discrete-time system as uncertain, yielding $\ntheta=20\cdot 21=420$ uncertain parameters. 
Despite this large uncertainty, it is easy to verify that the system is open-loop stable for all $\theta\in\Theta$.
The MPC implementation uses a horizon of $M=22$, $N=6$, and a scaling $\omega=5$, which satisfies the stability conditions in Theorem~\ref{thm:stable_artificial}. 
The parameter gain $\Gamma$ is chosen according to~\ifbool{arxiv}{\eqref{eq:compute_gamma}}{Lemma~\ref{lemma:stable_bounded}} 
using $\mathbb{U}$ and a box $\mathcal{R}\subseteq\mathbb{R}^{\nx}$ on the magnitude of possible states.

\textit{Numerical simulations:} 
The true parameters are constant and deviate exactly by $\pm 50\%$ from the initial estimate $\hat{\theta}_0$. Disturbances $w_k$ and noise $v_k$ are uniformly distributed. 
A piece-wise constant reference $y_{\mathrm{rd}}$ is chosen, which yields optimal steady-states that are sometimes on the boundary of the constraints $\mathbb{X}$. 
We compare the following designs: \\
(i) \emph{Proposed}: the proposed adaptive MPC;\\
(ii) \emph{No-term}: \textit{Proposed} without terminal cost ($M=0$);\\
(iii) \emph{No-adapt}: \textit{Proposed} without adaptation ($\Gamma=0$).\\
Numerical results for the remaining methods can be seen in Figure~\ref{fig:MassSPring_results}. A quantitative comparison is found in Table~\ref{tab:num_lin}. 
\change{We also considered the robust adaptive MPC from~\cite{lorenzen2019robust}. However, this approach requires an offline design of a Lyapunov function that is valid for all $\theta\in\Theta$, which became infeasible with uncertainty of only $\pm20\%$.}

\begin{table}[ht]
\centering
\begin{tabular}{c|c|c|c|c|c}
 &Proposed& No-term &No-adapt \\\hline
Track& $1.00$&$~2.51$&$~~1.38$ \\
Constr.& $1.00$&$18.97$&$698.05$
\end{tabular}
 \caption{Cumulative tracking error $\sum_k\|y_k-y_{\mathrm{rd},\theta}\|^2$ and constraint violation $\sum_k\|x_k\|_{\mathbb{X}}^2$ for a simulation of 200~[s], normalized to the performance of the proposed method.}
 \label{tab:num_lin}
\end{table} 
The proposed adaptive MPC successfully converges to the optimal setpoints. In the first few steps, the method shows constraint violations and overshoot. However, as the model is adapted during closed-loop operation, overshoot and constraint violations become negligible. 
The approach without the finite-horizon rollout terminal cost $\Vf$ (\emph{No-term}) also improves over time and successfully converges to the targets. 
However, the controller shows a rather sluggish response and takes visibly longer to get close to the target. 
The approach without adaptation (\emph{No-adapt}) yields large and persistent constraint violations and tracking errors, i.e., fails to achieve Objective~\ref{objective_1}. 
Computation times of the proposed MPC are $60\pm 5~[ms]$, which is almost one order of magnitude faster than required for real-time.
\begin{figure}[t]
\includegraphics[width=0.425\textwidth]{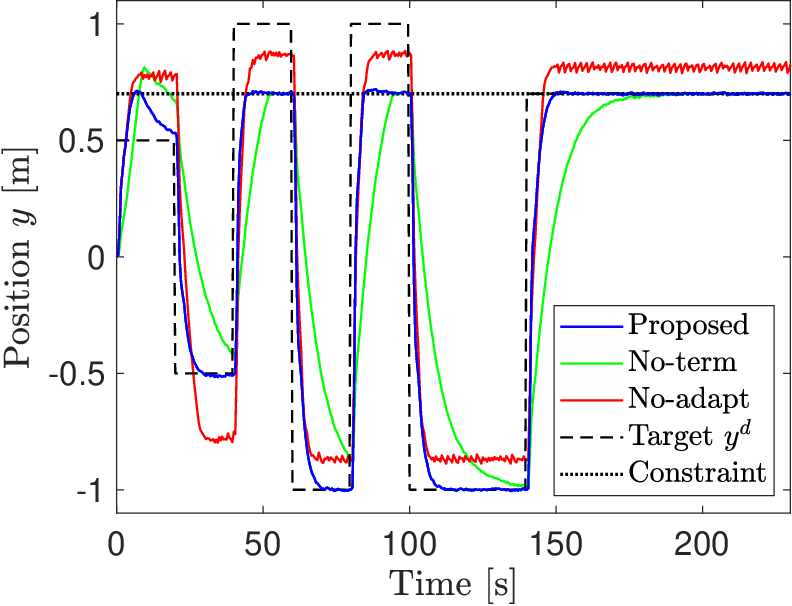}
\caption{Simulation result for stable mass-spring-dampers. }
\label{fig:MassSPring_results}
\end{figure}

\subsection{Nonlinear unstable quadrotor navigating obstacles}
\label{sec:num_drone}
We consider a planar quadrotor problem adapted from~\cite{sasfi2022robust}:
\begin{align*}
\begin{pmatrix}
\dot{p}_1\\\dot{p}_2\\\dot{\phi}\\\dot{v}_1\\\dot{v}_2\\\ddot{\phi}
\end{pmatrix}=
\begin{pmatrix}
v_1 \cos(\phi)-v_2\sin(\phi)\\
v_1\sin(\phi)+v_2\cos(\phi)\\
\dot{\phi}\\
v_2\dot{\phi}- \mathrm{g}\cdot \sin(\phi)+\cos(\phi)w\\
-v_1\dot{\phi}-\mathrm{g}\cdot\cos(\phi)+\theta_1(u_1+u_2)-\sin(\phi)w\\
\theta_2 (u_1-u_2)
\end{pmatrix}
\end{align*}
with positions $p_1,p_2$, velocities $v_1,v_2$, angle and angular velocities $\phi,\dot{\phi}$, thrust $u_1,u_2$, wind disturbance $w$, and dimensions $\nx=6$, $\nu=2$, $\nw=1$, $\ntheta=2$. 
The uncertain parameters correspond to uncertainty in mass and geometry (inertia and distance of propellers). 
The system is nonlinear and unstable. 
This example focuses on demonstrating performance under realistic model errors, and we do not explicitly verify whether the sufficient conditions on the prediction horizons and uncertainty levels in Theorem~\ref{thm:adaptive_feedback} are satisfied.  

We have compact input constraints $\mathbb{U}=[-1,4]^2$. 
In addition to desired bounds on velocity and angles, the state constraint $\mathbb{X}$ includes obstacle avoidance constraints. 
We consider the problem of reaching a distant setpoint $y^{\mathrm{d}}$ while subject to large disturbances, noise, and parametric uncertainty. 
The initial parameter estimate $\hat{\theta}_0$ deviates by a factor $2$ from the unknown true parameters.
The model is discretized using Euler with sampling time $25~[ms]$. 
We choose the feedback $\kappa$ (Asm.~\ref{ass:stable_feedback}) based on the LQR.
The LMS gain $\Gamma$ is chosen according to~\ifbool{arxiv}{\eqref{eq:compute_gamma}}{Lemma~\ref{lemma:stable_bounded}}. 
The horizon is chosen as $N=5$, $M=10$. 
We considered again the following designs: \\
(i)\emph{Proposed}: the proposed adaptive MPC;\\
(ii) \emph{No-term}: \textit{Proposed} without terminal cost ($M=0$);\\
(iii) \emph{No-adapt}: \textit{Proposed} without adaptation ($\Gamma=0$).\\
Numerical results for the remaining methods can be seen in Figure~\ref{fig:drone_results}. 
\change{We also considered the robust adaptive MPC from~\cite{sasfi2022robust}, however, this approach became infeasible for $6\%$ parametric uncertainty, which was well below the considered range of over $50\%$ uncertainty.}
The \emph{No-adapt} implementation was unstable and diverged due to the significant model error. 
The \emph{Proposed} implementation successfully navigates the obstacles and reaches the target in few seconds. 
The \emph{No-term} implementation also avoids the obstacle, but its response is rather slow, taking about five times longer to reach within one centimetre of the target. 
Even after convergence, the \emph{No-term} implementation shows significant oscillations around the target ($>10$~[cm]), while the \emph{Proposed} approach manages to hover close to the target ($\approx 4$~[mm]). 
Computation times for the proposed MPC are $14.1\pm 5.0~[ms]$, which is significantly below the real-time requirement. 
\begin{figure}[t]
\includegraphics[width=0.425\textwidth]{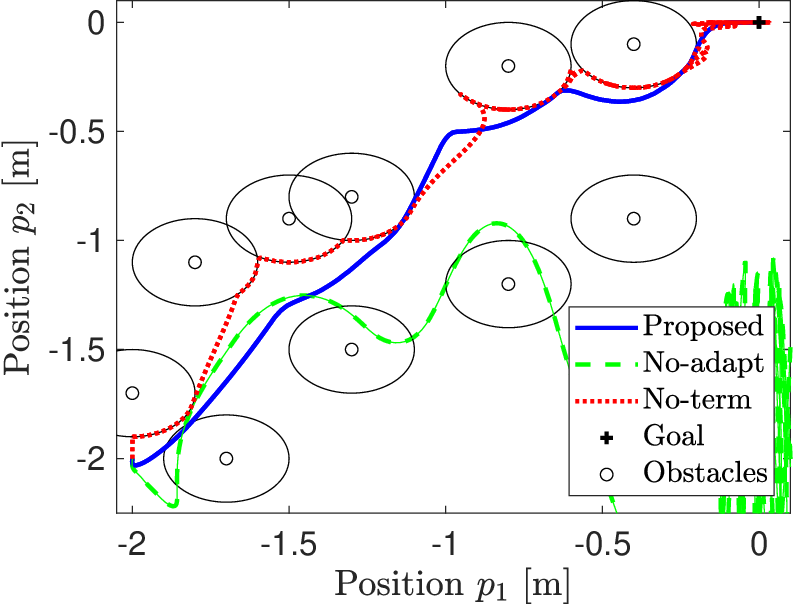}
\caption{Simulation result for unstable nonlinear quadrotor. }
\label{fig:drone_results}
\end{figure}

\section{Conclusion} 
\label{sec:sum}
We presented an adaptive control design for nonlinear systems by combining a certainty-equivalent tracking MPC formulation with online parameter adaptation.
The proposed methodology provides strong inherent robustness properties and can be directly applied to nonlinear dynamics, input constraints, (soft) state constraints, and setpoint tracking.  
Two numerical examples illustrate the practicality and benefits of the proposed method.
Future work is focused on assessing reliability in experimental settings.  
\begin{ack} 
The author thanks colleagues at ETH Zurich for helpful suggestions, in particular Melanie Zeilinger.   
\end{ack}
\bibliographystyle{plain}        
\bibliography{Literature} 
\appendix
\allowdisplaybreaks
\ifbool{arxiv}{\newpage}{}
\section{Proof of Section~\ref{sec:LMS}}
\label{app:proof_LMS}
\textit{Proof of Theorem~\ref{thm:LMS}:}
Assumptions~\ref{ass:regularity}\ref{item:ass_parameter_set} and \ref{ass:regularity}\ref{item:ass_Lipschitz_differentiable} together with non-expansiveness of the projection operator ensure 
$\Vtheta(\hat{\theta}_{k+1}-\theta_{k+1})\leq \Vtheta(\tilde{\theta}_{k+1}-\theta_{k+1})$. 
Furthermore, the update~\eqref{eq:LMS_1} satisfies 
$\tilde{\theta}_{k+1}-\hat{\theta}_k=\Gamma\hat{\Phi}_k^\top(\tilde{x}_{1|k}+\tilde{w}_k)$
using definitions~\eqref{eq:LMS_tilde_x}--\eqref{eq:LMS_tilde_w}.
Thus, it holds that 
\begin{align}
\label{eq:LMS_Lyap_proof_1}
&\Vtheta(\hat{\theta}_{k+1}-\theta_k)-\Vtheta(\hat{\theta}_k-\theta_k)\\
\leq& \Vtheta(\tilde{\theta}_{k+1}-\theta_k)-\Vtheta(\hat{\theta}_k-\theta_k)\nonumber\\
=&\|\tilde{\theta}_{k+1}-\hat{\theta}_k\|_{\Gamma^{-1}}^2+2(\tilde{\theta}_{k+1}-\hat{\theta}_k)^\top \Gamma^{-1}(\hat{\theta}_k-\theta_k)\nonumber\\
\stackrel{\eqref{eq:LMS_1}}{=}&\|\hat{\Phi}_k^\top(\tilde{x}_{1|k}+\tilde{w}_k)\|_{\Gamma}^2+2(\tilde{x}_{1|k}+\tilde{w}_k)^\top \underbrace{\hat{\Phi}_k(\hat{\theta}_k-\theta_k)}_{=-\tilde{x}_{1|k}}\nonumber\\
\stackrel{\mathrm{Asm.}~\ref{ass:param_gain}}\leq &\|\tilde{x}_{1|k}+\tilde{w}_k\|^2-2\|\tilde{x}_{1|k}\|^2-2\tilde{x}_{1|k}^\top \tilde{w}_k\nonumber\\
=&-\|\tilde{x}_{1|k}\|^2+\|\tilde{w}_k\|^2.\nonumber
\end{align}
To account for the change in the parameters $\Delta \theta_k=\theta_{k+1}-\theta_k$, note that the quadratic function $\Vtheta$ satisfies 
\begin{align}
\label{eq:LMS_Lyap_proof_2}
&\Vtheta(\hat{\theta}_{k+1}-\theta_{k+1})-\Vtheta(\hat{\theta}_{k+1}-\theta_{k})\\
=& \|\Delta \theta_k\|_{\Gamma^{-1}}^2+\Delta\theta_k^\top\Gamma^{-1}(\theta_k-\hat{\theta}_k)\nonumber\\
=&\Delta\theta_k^\top\Gamma^{-1}(\theta_{k+1}-\hat{\theta}_k)- \|\Delta \theta_k\|_{\Gamma^{-1}}^2
\leq  \sqrt{\Vtheta(\Delta \theta_k)}c_\theta,\nonumber
\end{align}
with $c_\theta$ from~\eqref{eq:LMS_c_theta} and $\hat{\theta}_k,\hat{\theta}_{k+1}\in\Theta$ due to~\eqref{eq:LMS_2}. 
Combining~\eqref{eq:LMS_Lyap_proof_1} and \eqref{eq:LMS_Lyap_proof_2} yields \eqref{eq:LMS_Lyap}. 
The bound~\eqref{eq:LMS_step} follows with 
\begin{align*}
&\|\hat{\theta}_{k+1}-\hat{\theta}_k\|_{\Gamma^{-1}}^2\stackrel{\eqref{eq:LMS_2}}{\leq} \|\tilde{\theta}_{k+1}-\hat{\theta}_k\|_{\Gamma^{-1}}^2\\
\stackrel{\eqref{eq:LMS_1}}{=}& \|\hat{\Phi}_k^\top(\tilde{x}_{1|k}+\tilde{w}_k)\|_{\Gamma}^2\stackrel{\mathrm{Asm.}~\ref{ass:param_gain}}{\leq} \|\tilde{x}_{1|k}+\tilde{w}_k\|^2.
\end{align*}
Lastly, the bound~\eqref{eq:LMS_w_bound} follows from~\eqref{eq:noise_measurement}, \eqref{eq:LMS_tilde_w} and Lipschitz continuity (Asm.~\ref{ass:regularity}\ref{item:ass_Lipschitz_differentiable}).

\ifbool{arxiv}{\newpage}{}

\section{Proofs of Section~\ref{sec:global_analysis}}
\label{app:proof_global_analysis}
\textit{Proof of Prop.~\ref{prop:ell_exp_stable}:}
\ifbool{arxiv}{We have $x_{\mathrm{s}}\in\mathbb{X}$ from~\eqref{eq:steady_state_manifold} and thus 
\begin{align*}
D_i x-d_i=D_i(x-x_{\mathrm{s}})+D_ix_{\mathrm{s}}-d_i\leq D_i(x-x_{\mathrm{s}}),
\end{align*}
for any $x\in\mathbb{R}^{\nx}$, $i\in\mathbb{I}_{[1,r]}$. 
Correspondingly, we 
\begin{align*}
\max\{D_i x-d_i,0\}^2\leq \max\{D_i (x-x_{\mathrm{s}}),0\}^2
\leq (D_i(x-x_{\mathrm{s}}))^2.
\end{align*}
Thus, for any $x\in\mathbb{R}^{\nx}$, the stage cost satisfies}{%
For any $x\in\mathbb{R}^{\nx}$ and $x_{\mathrm{s}}\in\mathbb{X}$, the stage cost satisfies} 
\ifbool{arxiv}{\begin{align*}
&\ell(x,u_{\mathrm{s}},x_{\mathrm{s}},u_{\mathrm{s}})
=\|x-x_{\mathrm{s}}\|_Q^2+\sum_{i=1}^r q_{\xi,i}\max\{D_i x-d_i,0\}^2\nonumber\\
\leq &\|x-x_{\mathrm{s}}\|_Q^2+\sum_{i=1}^r q_{\xi,i}\|D_i (x-x_{\mathrm{s}})\|^2\nonumber\\
\leq &(\sigma_{\max}(Q)+\underbrace{\sigma_{\max}\left(\sum_{i=1}^r q_{\xi,i} D_i^\top D_i\right)}_{=\sigma_{\xi}})\|x-x_{\mathrm{s}}\|^2.\nonumber 
\end{align*}}{%
\begin{align*}
&\ell(x,u_{\mathrm{s}},x_{\mathrm{s}},u_{\mathrm{s}})
=\|x-x_{\mathrm{s}}\|_Q^2+\sum_{i=1}^r q_{\xi,i}\max\{D_i x-d_i,0\}^2\nonumber\\
\leq &(\sigma_{\max}(Q)+\underbrace{\sigma_{\max}\left(\sum_{i=1}^r q_{\xi,i} D_i^\top D_i\right)}_{=\sigma_{\xi}})\|x-x_{\mathrm{s}}\|^2.\nonumber 
\end{align*}}
Assumption~\ref{ass:stable} implies
\ifbool{arxiv}{\begin{align*}
&\ell(x_{\mathbf{u}^k_{\mathrm{s}}}(k,x,\theta),u_{\mathrm{s}},x_{\mathrm{s}},u_{\mathrm{s}})\\
\leq &(\sigma_{\max}(Q)+\sigma_{\xi})\|x_{\mathbf{u}^k_{\mathrm{s}}}(k,x,\theta)-x_{\mathrm{s}}\|^2\\
\stackrel{\eqref{eq:exp_stable}}{\leq} &\Crho(\sigma_{\max}(Q)+\sigma_{\xi})\rho^k\|x-x_{\mathrm{s}}\|^2\\
\leq& 
\underbrace{\Crho\dfrac{\sigma_{\max}(Q)+\sigma_{\xi}}{\sigma_{\min}(Q)}}_{=:\Cell}\rho^k\|x-x_{\mathrm{s}}\|_Q^2.
\end{align*}}{%
\begin{align*}
&\ell(x_{\mathbf{u}^k_{\mathrm{s}}}(k,x,\theta),u_{\mathrm{s}},x_{\mathrm{s}},u_{\mathrm{s}})\\
\stackrel{\eqref{eq:exp_stable}}{\leq}& 
\underbrace{\Crho\dfrac{\sigma_{\max}(Q)+\sigma_{\xi}}{\sigma_{\min}(Q)}}_{=:\Cell}~\rho^k\|x-x_{\mathrm{s}}\|_Q^2.
\end{align*}
}
\textit{Proof of Prop.~\ref{prop:cost_controllable}:}
Considering the candidate solution $\tilde{\mathbf{u}}=\mathbf{u}_{\mathrm{s}}^N\in\mathbb{U}^N$ and Proposition~\ref{prop:ell_exp_stable}, we get 
\begin{align*}
&\min_{\mathbf{u}\in\mathbb{U}^N}\JN(x,\theta,\mathbf{u},x_{\mathrm{s}},u_{\mathrm{s}})
\leq \JN(x,\theta,\tilde{\mathbf{u}},x_{\mathrm{s}},u_{\mathrm{s}})\\
\stackrel{\eqref{eq:J_N},\eqref{eq:ell_exp_stable}}{\leq}& 
\sum_{k=0}^{N-1}\Cell\rho^k\|x-x_{\mathrm{s}}\|_Q^2
+\omega\sum_{k=N}^{N+M-1}\Cell\rho^k\|x-x_{\mathrm{s}}\|_Q^2\\
= &\underbrace{\Cell\left[\dfrac{1-\rho^N}{1-\rho}+\omega\rho^N\dfrac{1-\rho^M}{1-\rho}\right]}_{=:\gamma_{N}}\|x-x_{\mathrm{s}}\|_Q^2. 
\end{align*}  
\textit{Proof of Prop.~\ref{prop:finite_tail}:} 
\ifbool{arxiv}{%
Abbreviate $\ell_k=\ell(x_{\mathbf{u}^k_{\mathrm{s}}}(k,x,\theta),u_{\mathrm{s}},x_{\mathrm{s}},u_{\mathrm{s}})$, $k\in\mathbb{I}_{[0,M]}$. \\
\textbf{Part I: }
We prove Inequality~\eqref{eq:approx_CLF} with the candidate input $u=u_{\mathrm{s}}\in\mathbb{U}$. 
Given the definition of $\Vf$~\eqref{eq:V_f}, and the candidate input, Condition~\eqref{eq:approx_CLF} reduces to
\begin{align*}
\ell_0+\omega\sum_{k=1}^M\ell_k\leq (1+\epsilon_{\mathrm{f}})\omega\sum_{k=0}^{M-1}\ell_k,
\end{align*}
or equivalently
\begin{align*}
\ell_M+\dfrac{1-\omega}{\omega}\ell_0\leq \epsilon_{\mathrm{f}} \sum_{k=0}^{M-1}\ell_k.
\end{align*}
For the special case $\omega=1$, this condition has been analysed in~\cite[Prop.~4]{koehler2021LP}. 
Following the same arguments, a constant $\epsilon_{\mathrm{f}}$ satisfying~\eqref{eq:approx_CLF} can be computed using the following LP:
\begin{subequations}
\label{eq:LP}
\begin{align}
\label{eq:LP_cost}
\epsilon_{\mathrm{f}}:=&\max_{\tilde{\ell}_k}\tilde{\ell}_M+\dfrac{1-\omega}{\omega}\tilde{\ell}_0\\
\label{eq:LP_eq}
\text{s.t. }&\sum_{k=0}^{M-1}\tilde{\ell}_k=1,\\
\label{eq:LP_ineq}
&\tilde{\ell}_M\leq \Cell\rho^{M-k}\tilde{\ell}_k,\quad k\in\mathbb{I}_{[0,M-1]},\\
\label{eq:LP_non_neg}
&\tilde{\ell}_M\geq 0. 
\end{align}
\end{subequations}
We denote a maximizer by $\tilde{\ell}_k^\star$. 
Note that Inequality~\eqref{eq:LP_ineq} corresponds to Inequality~\eqref{eq:ell_exp_stable} from Proposition~\ref{prop:ell_exp_stable}.\\
\textbf{Part II: }Next, we show that the maximizer satisfies the constraints~\eqref{eq:LP_ineq} with equality for $k=0$. 
For contradiction, suppose Inequality~\eqref{eq:LP_ineq} for $k=0$ is not active, i.e., $\tilde{\ell}^\star_M<\Cell\rho^M\tilde{\ell}^\star_0$.
Consider the candidate $\tilde{\ell}_M=\tilde{\ell}_M^\star+\delta$, $\tilde{\ell}_k=\tilde{\ell}_k^\star+\frac{\delta}{\Cell\rho^{M-k}}$, $k\in\mathbb{I}_{[1,M-1]}$ with some $\delta>0$, which satisfies~\eqref{eq:LP_ineq} for $k\in\mathbb{I}_{[1,M-1]}$.
Equation~\eqref{eq:LP_eq} holds by choosing $\tilde{\ell}_0=\tilde{\ell}_0^\star-\frac{\delta}{\Cell}\sum_{k=1}^{M-1}\rho^{k-M}=
\tilde{\ell}_0^\star-\frac{\delta}{\Cell\rho^{M-1}}\frac{1-\rho^{M-1}}{1-\rho}$. 
Inequality~\eqref{eq:LP_ineq} for $k=0$ remains valid by choosing $\delta>0$ sufficiently small. 
Considering the cost of this candidate, we have
\begin{align*}
&\tilde{\ell}_M-\tilde{\ell}_M^\star+\dfrac{1-\omega}{\omega}(\tilde{\ell}_0-\tilde{\ell}_0^\star)\\
=&\delta\left[
1-\dfrac{1-\omega}{\omega}\dfrac{1}{\Cell\rho^{M-1}}\dfrac{1-\rho^{M-1}}{1-\rho}
\right]\geq \delta>0,
\end{align*}
where the last inequality used $\omega\geq 1$. 
This result contradicts optimality. 
Thus, Inequality~\eqref{eq:LP_ineq} for $k=0$ holds with equality and the LP~\eqref{eq:LP} reduces to 
\begin{subequations}
\label{eq:LP_2}
\begin{align}
\label{eq:LP_2_cost}
\epsilon_{\mathrm{f}}:=&\max_{\tilde{\ell}_k}\tilde{\ell}_M\left[1+\dfrac{1-\omega}{\omega\Cell\rho^M}\right]\\
\label{eq:LP_2_eq}
\text{s.t. }&\tilde{\ell}_M/(\Cell\rho^M)+\sum_{k=1}^{M-1}\tilde{\ell}_k=1,\\
\label{eq:LP_2_ineq}
&\tilde{\ell}_M\leq \Cell\rho^{M-k}\tilde{\ell}_k,\quad k\in\mathbb{I}_{[1,M-1]},\\
\label{eq:LP_2_non_neg}
&\tilde{\ell}_M\geq 0. 
\end{align}
\end{subequations}
\textbf{Part III: }Next, we use a case distinction to show that the solution to~\eqref{eq:LP_2} is given by~\eqref{eq:epsilon_f}.\\
\textit{Case a: }Suppose that $1+\frac{1-\omega}{\omega\Cell\rho^M}>0$, i.e., the objective is to maximize $\tilde{\ell}_M$. 
For contradiction, suppose that $\tilde{\ell}^\star_M<\Cell\rho^{M-k'}\tilde{\ell}^\star_{k'}$ for some $k'\in\mathbb{I}_{[1,M-1]}$.
Consider the candidate $\tilde{\ell}_M=\tilde{\ell}_M^\star+\delta$, $\tilde{\ell}_j=\tilde{\ell}_j^\star+\frac{\delta}{\Cell\rho^{M-j}}$, $j\in\mathbb{I}_{[1,k'-1]\cup[k'+1,M-1]}$ with some $\delta>0$ and $\tilde{\ell}_{k'}$ such that Equality~\eqref{eq:LP_2_eq} holds.
Inequalities~\eqref{eq:LP_2_ineq}, $k\in\mathbb{I}_{[1,k'-1]\cup[k'+1,M-1]}$ hold by definition and Inequality~\eqref{eq:LP_2_ineq} for $k=k'$ holds by choosing $\delta>0$ sufficiently small. 
Hence, we have a feasible candidate with a strictly better cost ($\tilde{\ell}_M>\tilde{\ell}_M^\star$), contradicting optimality.
Given that~\eqref{eq:LP_2_ineq} hold with equality, Equation~\eqref{eq:LP_eq}/\eqref{eq:LP_2_eq} yields
\begin{align*}
\Cell=\Cell\sum_{k=0}^{M-1}\tilde{\ell}_k^\star=\tilde{\ell}_M^\star\sum_{k=0}^{M-1}\rho^{k-M}=\tilde{\ell}^\star_M\rho^{-M}\dfrac{1-\rho^M}{1-\rho},
\end{align*}
and thus
\begin{align*}
\tilde{\ell}_M^\star=\Cell\rho^M\dfrac{1-\rho}{1-\rho^M}.
\end{align*}
Hence, the maximum in~\eqref{eq:LP}/\eqref{eq:LP_2} is given by
\begin{align*} 
\epsilon_{\mathrm{f}}^\star=&\tilde{\ell}^\star_M\left[1+\dfrac{1-\omega}{\omega \Cell\rho^M}\right] 
=\dfrac{1-\rho}{1-\rho^M}\left[\Cell\rho^M+\dfrac{1-\omega}{\omega}\right].
\end{align*}
\textit{Case b: } 
Suppose that $1+\frac{1-\omega}{\Cell\rho^M\omega}\leq 0$, i.e., the objective is to minimize $\tilde{\ell}_M$ (or is independent of $\tilde{\ell}_M$).
In this case, a (non-unique) maximizer is given by $\tilde{\ell}_M^\star=0$, $\tilde{\ell}_1^\star=1$, $\tilde{\ell}_k^\star=0$, $k\in\mathbb{I}_{[2,M-1]}$, with $\epsilon_{\mathrm{f}}=0$.\\ 
\textit{Combination: }
By combining the two cases, we get~\eqref{eq:epsilon_f}.}{%
Inequality~\eqref{eq:epsilon_f} is derived similar to~\cite[Prop.~4]{koehler2021LP}, by computing the worst-case cost sequence compatible with Inequality~\eqref{eq:ell_exp_stable}, see~\cite[App.~B]{adaptive_arxiv} for details.}

\textit{Proof of Thm.~\ref{thm:stable_artificial}:}
Considering the candidate $(x_{\mathrm{s}},u_{\mathrm{s}},y_{\mathrm{s}})=(x_{\mathrm{s},x,\theta}^\star,u_{\mathrm{s},x,\theta}^\star,y_{\mathrm{s},x,\theta}^\star)$, Inequality~\eqref{eq:RDP} holds if 
\begin{align*}
&\min_{u\in\mathbb{U}^N}\JN(f(x,\pi(x,\theta),\theta),\theta,u,x^\star_{\mathrm{s},x,\theta},u_{\mathrm{s},x,\theta}^\star)\\
\leq& \min_{u\in\mathbb{U}^N}\JN(x,\theta,u,x^\star_{\mathrm{s},x,\theta},u_{\mathrm{s},x,\theta}^\star)\\
&-\alpha\ell(x,\pi(x,\theta),x_{\mathrm{s},x,\theta}^\star,u_{\mathrm{s},x,\theta}^\star),
\end{align*}
i.e., it suffices to consider Problem~\eqref{eq:MPC} with a fixed artificial setpoint $x_{\mathrm{s}}$.
This problem has been analysed in~\cite[Thm. 6--7]{koehler2021LP} using an LP analysis. 
In particular, the bounds derived in Propositions~\ref{prop:cost_controllable}--\ref{prop:finite_tail} correspond to~\cite[Asm.~4-5]{koehler2021LP}.\footnote{%
The lower and upper bound in~\cite[Asm.~4-5]{koehler2021LP} hold trivially by defining $\overline{c}_{\mathrm{f}}=\gamma_1/(1+\epsilon_{\mathrm{f}})$ and $\underline{c}_{\mathrm{f}}=\omega$.} 
Using~\cite[Thm.~7]{koehler2021LP}, Inequality~\eqref{eq:RDP} holds with 
{\begin{align}
\label{eq:alpha}
\alpha=1-\dfrac{\epsilon_{\mathrm{f}}(\gamma_N-1)\prod_{j=1}^{N-1}(\gamma_{N-j+1}-1)}{(1+\epsilon_{\mathrm{f}})\prod_{j=1}^{N-1}\gamma_{N-j+1}-\epsilon_{\mathrm{f}}\prod_{j=1}^{N-1}(\gamma_{N-j+1}-1)}.
\end{align}}
\ifbool{arxiv}{Using the fact that $\gamma_k\leq \overline{\gamma}$, we also get 
\begin{align*}
&\alpha\geq 1-\dfrac{\epsilon_{\mathrm{f}}(\overline{\gamma}-1)^{N}}{(1+\epsilon_{\mathrm{f}})\overline{\gamma}^{N-1}-\epsilon_{\mathrm{f}}(\overline{\gamma}-1)^{N-1}}.
\end{align*}
 Correspondingly, $\alpha>0$ if $(1+\epsilon_{\mathrm{f}})\overline{\gamma}^{N-1}>\epsilon_{\mathrm{f}}\overline{\gamma}(\overline{\gamma}-1)^{N-1}$. 
For any finite $\epsilon_{\mathrm{f}}\geq 0$, this condition holds by choosing $N$ large enough. 
Furthermore, to ensure any horizon $N\in\mathbb{I}_{\geq 1}$ ensures stability, a sufficient condition is given by $\overline{\gamma}\epsilon_{\mathrm{f}}<1$.
Using the specific formulas for $\overline{\gamma},\epsilon_{\mathrm{f}}$,\footnote{%
The case distinction in~\eqref{eq:epsilon_f} can be ignored, since $\epsilon_{\mathrm{f}}=0$ trivially satisfies the condition.} this condition reduces to}{
Furthermore, $\alpha>0$ for $N$ sufficiently large~\cite{koehler2021LP}.  
Using the formula for $\gamma_N$, a simple sufficient condition for $\alpha>0$ with $N\geq 1$ is given by  (cf.~\cite[App.~B]{adaptive_arxiv})}: 
\begin{align*} 
\Cell\omega\left(1-\Cell\rho^M\right)>\Cell-1+\rho^M.
\end{align*}
In case $1-\Cell\rho^M>0$, this holds if
\begin{align*}
&\omega>\underline{\omega}:=\max\left\{\dfrac{\Cell-1+\rho^M}{\Cell(1-\Cell\rho^M)},1\right\}.
\end{align*} 
\textit{Proof of Prop.~\ref{prop:artificial_bound}:} 
\ifbool{arxiv}{%
The proof follows the arguments in~\cite[Prop.~1]{soloperto2021nonlinear}. 
For contradiction, suppose Inequality~\eqref{eq:artificial_bound} does not hold, i.e., 
\begin{align}
\label{eq:artificial_bound_contradiction}
\|x-x_{\mathrm{s},x,\theta}^\star\|_Q^2> \overline{a}\|x_{\mathrm{s},x,\theta}^\star-x_{\mathrm{rd},\theta}\|_Q^2.
\end{align} 
Consider $\tilde{y}=\beta y_{\mathrm{s},x,\theta}^\star+(1-\beta)y_{\mathrm{rd},\theta}$ with some later specified constant $\beta\in(0,1)$. 
Assumptions~\ref{ass:steady_regular}\ref{item:ass_unique}--\ref{item:ass_convex} ensure that $(\tilde{x}_{\mathrm{s}},\tilde{u}_{\mathrm{s}},\tilde{y}_{\mathrm{s}})\in\mathbb{S}(\theta)$ with $\tilde{x}_{\mathrm{s}}=\gyx(\tilde{y}_{\mathrm{s}},\theta)$, $\tilde{u}_{\mathrm{s}}=\gyu(\tilde{y}_{\mathrm{s}},\theta)$. 
Furthermore, the convex quadratic offset cost satisfies 
\begin{align}
\label{eq:convex_offset_cost}
&\|\tilde{y}_{\mathrm{s}}-y_{\mathrm{d}}\|_T^2-\|y_{\mathrm{s},x,\theta}^\star-y_{\mathrm{d}}\|^2_T\leq -(1-\beta^2)\|y_{\mathrm{s},x,\theta}^\star-y_{\mathrm{rd},\theta}\|_T^2.
\end{align} 
Utilizing the feasible candidate solution $(\tilde{x}_{\mathrm{s}},\tilde{u}_{\mathrm{s}},\tilde{y}_{\mathrm{s}})\in\mathbb{S}(\theta)$, we have
\begin{align*}
0\leq& \JN^\star(x,\theta)-\|y^\star_{\mathrm{s},x,\theta}-y_{\mathrm{d}}\|_T^2\\
\leq& \min_{\mathbf{u}\in\mathbb{U}^N}\JN(x,\theta,\mathbf{u},\tilde{x}_{\mathrm{s}},\tilde{u}_{\mathrm{s}})
+\|\tilde{y}_{\mathrm{s}}-y_{\mathrm{d}}\|_T^2-\|y^\star_{\mathrm{s},x,\theta}-y_{\mathrm{d}}\|_T^2\\
\stackrel{\eqref{eq:cost_control},\eqref{eq:convex_offset_cost}}{\leq} &\gamma_N\|x-\tilde{x}_{\mathrm{s}}\|_Q^2-(1-\beta^2)\|y_{\mathrm{s},x,\theta}^\star-y_{\mathrm{rd},\theta}\|_T^2\\
\leq& 2\gamma_N( \|x-x_{\mathrm{s},x,\theta}^\star\|_Q^2+\|x_{\mathrm{s},x,\theta}^\star-\tilde{x}_{\mathrm{s}}\|_Q^2)\\
&-(1-\beta^2)\|y_{\mathrm{s},x,\theta}^\star-y_{\mathrm{rd},\theta}\|_T^2\\
\stackrel{\eqref{eq:g_Lipschitz},\eqref{eq:artificial_bound_contradiction}}{<}&2\gamma_N\overline{a} \|x_{\mathrm{s},x,\theta}^\star-x_{\mathrm{rd},\theta}\|_Q^2
+2\gamma_NL_{\mathrm{g}}\|y_{\mathrm{s},x,\theta}^\star-\tilde{y}_{\mathrm{s}}\|_T^2\\
&-(1-\beta^2)\|y_{\mathrm{s},x,\theta}^\star-y_{\mathrm{rd},\theta}\|_T^2\\ 
\stackrel{\eqref{eq:g_Lipschitz}}{\leq} &\left[ 2\gamma_NL_{\mathrm{g}}(\overline{a}+(1-\beta)^2)-(1-\beta^2) \right] \|y_{\mathrm{s},x,\theta}^\star-y_{\mathrm{rd},\theta}\|_T^2,
\end{align*}
where the last step also used $ y_{\mathrm{s},x,\theta}^\star-\tilde{y}= (1-\beta)(y_{\mathrm{s},x,\theta}^\star-y_{\mathrm{rd},\theta})$.
For $\beta$ sufficiently close to $1$ and $\overline{a}$ sufficiently small, the last term is non-positive, which results in a contradiction.}
{The proof follows the arguments in~\cite[Prop.~1]{soloperto2021nonlinear}, see~\cite[App.~B]{adaptive_arxiv} for details.}

\textit{Proof of Corollary~\ref{corol:nominal}:}
We show stability using the Lyapunov function $V(x,\theta):=\JN^\star(x,\theta)-\|y_{\mathrm{rd},\theta}-y_{\mathrm{d}}\|_T^2$~\cite{limon2018nonlinear,soloperto2021nonlinear}. 
Abbreviate $x^+=\fw(x,u,\theta,0)$, $u=\pi(x,\theta)$. 
Proposition~\ref{prop:artificial_bound} implies
\begin{align}
\label{eq:ell_x_rd}
&\ell(x,u,x_{\mathrm{s},x,\theta}^\star,u_{\mathrm{s},x,\theta}^\star)\geq \|x-x_{\mathrm{s},x,\theta}^\star\|_Q^2\nonumber\\
\stackrel{\eqref{eq:artificial_bound}}{\geq}&\frac{1}{2}\|x-x_{\mathrm{s},x,\theta}^\star\|_Q^2+\frac{\overline{a}}{2}\|x_{\mathrm{s},x,\theta}^\star-x_{\mathrm{rd},\theta}\|_Q^2\nonumber\\
\geq& \frac{\min\{\overline{a},1\}}{4}\|x-x_{\mathrm{rd},\theta}\|_Q^2.
\end{align}
By applying this bound to the decrease condition in Theorem~\ref{thm:stable_artificial}, we get
\begin{align}
\label{eq:value_decrease_nominal}
&V(x^+,\theta)-V(x,\theta)\stackrel{\eqref{eq:RDP}}{\leq} -\alpha\ell(x,\pi(x,\theta),x_{\mathrm{s},x,\theta},u_{\mathrm{s},x,\theta}^\star)\nonumber\\ 
\stackrel{\eqref{eq:ell_x_rd}}{\leq} & -\frac{\min\{\overline{a},1\}\alpha}{4}\|x-x_{\mathrm{rd},\theta}\|_Q^2.
\end{align}	
The lower bound on $V$ holds with
\begin{align}
\label{eq:value_lower_bound}
&V(x,\theta)\geq \ell(x,\pi(x,\theta),x_{\mathrm{s},x,\theta},u_{\mathrm{s},x,\theta}^\star)\nonumber\\
\stackrel{\eqref{eq:ell_x_rd}}{\geq}& \frac{\min\{\overline{a},1\}}{4}\|x-x_{\mathrm{rd},\theta}\|_Q^2,
\end{align}	
where the first inequality uses the fact that $y_{\mathrm{rd},\theta}$ is a minimizer for~\eqref{eq:opt_steady_state}. 
An upper bound on $V$ follows using Proposition~\ref{prop:cost_controllable} and the candidate $(x_{\mathrm{s}},u_{\mathrm{s}},y_{\mathrm{s}})=(x_{\mathrm{rd},\theta},u_{\mathrm{rd},\theta},y_{\mathrm{rd},\theta})$:
\begin{align}
\label{eq:value_upper_bound}
V(x,\theta)\leq \min_{\mathbf{u}\in\mathbb{U}^N}\JN(x,\theta,\mathbf{u},x_{\mathrm{rd},\theta},u_{\mathrm{rd},\theta})\stackrel{\eqref{eq:cost_control}}{\leq} \gamma_N\|x-x_{\mathrm{rd},\theta}\|_Q^2.
\end{align} 
Exponential stability of $x_{\mathrm{rd},\theta}$ follows using standard Lyapunov arguments~\cite[App.~B]{rawlings2017model}.

\ifbool{arxiv}{\newpage}{}

\section{Proofs of Section~\ref{sec:global_robust}}
\label{app:proof_global_robust}
\ifbool{arxiv}{
We first provide a series of lemmas to establish continuity of the stage cost (Lemma~\ref{lemma:stage_cost_epsilon}), dynamics (Lemma~\ref{lemma:prediction_lipschitz}), open-loop cost (Lemma~\ref{lemma:continuity_openloop_cost}), feasible steady-states (Lemma~\ref{lemma:continuity_steady_state_theta}), and the optimal cost (Lemma~\ref{lemma:continuity_opt_cost}), before proving Theorem~\ref{thm:robust_stability_global}.

\begin{lemma}
\label{lemma:stage_cost_epsilon}
There exists a constant $\sigma_\xi>0$, such that for any $x,\tilde{x},\tilde{x}_{\mathrm{s}},x_{\mathrm{s}}\in\mathbb{R}^{\nx}$, $u,\tilde{u},u_{\mathrm{s}},\tilde{u}_{\mathrm{s}}\in\mathbb{R}^{\nu}$, and any $\epsilon>0$, the quadratic stage cost satisfies
\begin{align}
\label{eq:stage_cost_epsilon}
&\dfrac{1}{1+\epsilon}\ell(x+\tilde{x},u+\tilde{u},x_{\mathrm{s}}+\tilde{x}_{\mathrm{s}},u_{\mathrm{s}}+\tilde{u}_{\mathrm{s}})\\
\leq &\ell(x,u,x_{\mathrm{s}},u_{\mathrm{s}})
+\dfrac{1}{\epsilon}\left[\|\tilde{x}-\tilde{x}_{\mathrm{s}}\|_Q^2+\|\tilde{u}-\tilde{u}_{\mathrm{s}}\|_R^2+\sigma_\xi\|\tilde{x}\|^2\right].\nonumber
\end{align}
\end{lemma}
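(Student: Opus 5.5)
The plan is to split the stage cost~\eqref{eq:ell} into its three parts and bound each with the elementary weighted Young inequality
\begin{align*}
\|a+b\|_P^2\leq (1+\epsilon)\|a\|_P^2+\left(1+\tfrac{1}{\epsilon}\right)\|b\|_P^2,
\end{align*}
which holds for any $P\succeq 0$ and $\epsilon>0$, since $2a^\top P b\leq \epsilon\|a\|_P^2+\frac{1}{\epsilon}\|b\|_P^2$. Dividing by $1+\epsilon$ gives $\frac{1}{1+\epsilon}\|a+b\|_P^2\leq \|a\|_P^2+\frac{1}{\epsilon}\|b\|_P^2$, because $(1+1/\epsilon)/(1+\epsilon)=1/\epsilon$. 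This is exactly the structure of~\eqref{eq:stage_cost_epsilon}.

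\textbf{Tracking terms.} For the state term, set $a=x-x_{\mathrm{s}}$ and $b=\tilde{x}-\tilde{x}_{\mathrm{s}}$ with $P=Q$. This yields
\begin{align*}
\tfrac{1}{1+\epsilon}\|x+\tilde{x}-x_{\mathrm{s}}-\tilde{x}_{\mathrm{s}}\|_Q^2\leq \|x-x_{\mathrm{s}}\|_Q^2+\tfrac{1}{\epsilon}\|\tilde{x}-\tilde{x}_{\mathrm{s}}\|_Q^2.
\end{align*}
The input term is handled identically with $P=R$, $a=u-u_{\mathrm{s}}$ and $b=\tilde{u}-\tilde{u}_{\mathrm{s}}$.

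\textbf{Constraint-violation term.} This is the only non-quadratic part and is the main (mild) obstacle. Note that it does not depend on the setpoint, so only $\tilde{x}$ enters. The key observation is that $s\mapsto\max\{s,0\}$ is subadditive, so
\begin{align*}
\max\{D_i(x+\tilde{x})-d_i,0\}\leq \max\{D_i x-d_i,0\}+|D_i\tilde{x}|.
\end{align*}
Both sides are nonnegative, so squaring preserves the inequality. Applying the scalar Young inequality to the right-hand side then gives
\begin{align*}
\tfrac{1}{1+\epsilon}\max\{D_i(x+\tilde{x})-d_i,0\}^2\leq \max\{D_i x-d_i,0\}^2+\tfrac{1}{\epsilon}(D_i\tilde{x})^2.
\end{align*}
Next, multiply by $q_{\xi,i}$ and sum over $i\in\mathbb{I}_{[1,r]}$. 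The remainder is bounded by
\begin{align*}
\sum_{i=1}^r q_{\xi,i}(D_i\tilde{x})^2=\tilde{x}^\top\Big(\sum_{i=1}^r q_{\xi,i}D_i^\top D_i\Big)\tilde{x}\leq\sigma_\xi\|\tilde{x}\|^2,
\end{align*}
with $\sigma_\xi:=\sigma_{\max}\big(\sum_{i=1}^r q_{\xi,i}D_i^\top D_i\big)$, the same constant as in the proof of Proposition~\ref{prop:ell_exp_stable}. If this matrix is zero, any $\sigma_\xi>0$ works.

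\textbf{Combining.} Adding the three bounds gives~\eqref{eq:stage_cost_epsilon}. The constant $\sigma_\xi$ depends only on $D_i$ and $q_{\xi,i}$, so it is independent of $\epsilon$ and of all the vectors, as the statement requires.
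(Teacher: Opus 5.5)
Your proposal is correct and takes the same route as the paper. The paper likewise uses Young's inequality with factor $(1+\epsilon)$ on the quadratic tracking terms, and bounds the soft-constraint penalty via $\max\{D_i(x+\tilde{x})-d_i,0\}\leq \max\{D_ix-d_i,0\}+|D_i\tilde{x}|$ with $\sigma_\xi=\sigma_{\max}(\sum_{i=1}^r q_{\xi,i}D_i^\top D_i)$, before summing and dividing by $1+\epsilon$. Your remark that any $\sigma_\xi>0$ works when this matrix vanishes is a small detail the paper leaves implicit.
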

\begin{pf}
Cauchy-Schwarz and Young’s inequality applied to the quadratic part of the stage cost~\eqref{eq:ell} imply
\begin{align*}
&\|x+\tilde{x}-(x_{\mathrm{s}}+\tilde{x}_{\mathrm{s}})\|_Q^2+\|u+\tilde{u}-(u_{\mathrm{s}}+\tilde{u}_{\mathrm{s}})\|_R^2\\
\leq &
(1+\epsilon)\left[\|x-x_{\mathrm{s}}\|_Q^2+\|u-u_{\mathrm{s}}\|_R^2\right]\\
&+\dfrac{1+\epsilon}{\epsilon} \left[\|\tilde{x}-\tilde{x}_{\mathrm{s}}\|_Q^2+\|\tilde{u}-\tilde{u}_{\mathrm{s}}\|_R^2\right].
\end{align*}
For the soft-constrained penalty, note that 
\begin{align*}
\max\{D_i (x+\tilde{x})-d_i,0\}\leq \max\{D_i x-d_i,0\}+|D_i\tilde{x}| 
\end{align*}
and hence 
\begin{align*}
&\sum_{i=1}^r q_{\xi,i}\max\{D_i (x+\tilde{x})-d_i,0\}^2\\
\leq& (1+\epsilon) \sum_{i=1}^r q_{\xi,i}\max\{D_i x-d_i,0\}^2
+\dfrac{1+\epsilon}{\epsilon}\sigma_{\xi}\|\tilde{x}\|^2,
\end{align*}
with $\sigma_{\xi}=\sigma_{\max}(\sum_{i=1}^r q_{\xi,i} D_i^\top D_i)$ from Appendix~\ref{app:proof_global_analysis}. 
Adding both bounds, using the definition of the cost~\eqref{eq:ell} and dividing by $1+\epsilon$ yields~\eqref{eq:stage_cost_epsilon}.
\end{pf}
\begin{lemma}(Lipschitz continuity)
\label{lemma:prediction_lipschitz}
Let Assumption~\ref{ass:regularity}\ref{item:ass_Lipschitz_differentiable} hold. 
There exist constants $c_{\mathrm{x},k}\geq 0$, $k\in\mathbb{I}_{\geq 0}$, such that for all $x,\tilde{x}\in\mathbb{R}^{\nx}$, $\theta,\tilde{\theta}\in\Theta$, $N\in\mathbb{I}_{\geq 0}$ and all $\mathbf{u},\tilde{\mathbf{u}}\in\mathbb{U}^N$, it holds that 
\begin{align}
\label{eq:prediction_Lipschitz}
&\|x_{\tilde{\mathbf{u}}}(k,\tilde{x},\tilde{\theta})-x_{\mathbf{u}}(k,x,\theta)\|\\
\leq& c_{\mathrm{x},k}\left[\|\tilde{x}-x\|+\|\tilde{\theta}-\theta\|+\max_{j\in\mathbb{I}_{[0,k-1]}}\|\tilde{\mathbf{u}}_j-\mathbf{u}_j\|\right].\nonumber 
\end{align}
\end{lemma}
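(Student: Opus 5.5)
The bound \eqref{eq:prediction_Lipschitz} will follow by induction on $k$ from the one-step Lipschitz property of $\fw$ in Assumption~\ref{ass:regularity}\ref{item:ass_Lipschitz_differentiable}. Since $\fw$ is Lipschitz jointly in $(x,u,\theta,w)$ with constant $L_{\mathrm{f}}$, for any $x,\tilde{x}$, $u,\tilde{u}$, $\theta,\tilde{\theta}$ (with $w=0$) we have
\begin{align*}
\|\fw(\tilde{x},\tilde{u},\tilde{\theta},0)-\fw(x,u,\theta,0)\|\leq L_{\mathrm{f}}\left(\|\tilde{x}-x\|+\|\tilde{u}-u\|+\|\tilde{\theta}-\theta\|\right).
\end{align*}
If the Lipschitz constant is stated with respect to the Euclidean norm of the stacked argument, this still holds, because the stacked norm is bounded by the sum of the component norms.

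For the base case $k=0$, both trajectories start at their initial states, so the left-hand side equals $\|\tilde{x}-x\|$ and we may take $c_{\mathrm{x},0}=1$. Here the maximum over the empty index set is read as $0$, and all terms are nonnegative anyway.

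For the induction step, abbreviate $e_k:=\|x_{\tilde{\mathbf{u}}}(k,\tilde{x},\tilde{\theta})-x_{\mathbf{u}}(k,x,\theta)\|$ and $\Delta_k:=\|\tilde{x}-x\|+\|\tilde{\theta}-\theta\|+\max_{j\in\mathbb{I}_{[0,k-1]}}\|\tilde{\mathbf{u}}_j-\mathbf{u}_j\|$. Note that $\Delta_k$ is nondecreasing in $k$. Applying the one-step bound to the trajectories at step $k$ gives
\begin{align*}
e_{k+1}\leq L_{\mathrm{f}}\left(e_k+\|\tilde{\mathbf{u}}_k-\mathbf{u}_k\|+\|\tilde{\theta}-\theta\|\right)\leq L_{\mathrm{f}}(c_{\mathrm{x},k}+2)\Delta_{k+1}.
\end{align*}
This uses the induction hypothesis $e_k\leq c_{\mathrm{x},k}\Delta_k\leq c_{\mathrm{x},k}\Delta_{k+1}$, together with $\|\tilde{\mathbf{u}}_k-\mathbf{u}_k\|\leq\Delta_{k+1}$ and $\|\tilde{\theta}-\theta\|\leq\Delta_{k+1}$. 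Setting $c_{\mathrm{x},k+1}:=L_{\mathrm{f}}(c_{\mathrm{x},k}+2)$ therefore yields the claim. In closed form, $c_{\mathrm{x},k}=L_{\mathrm{f}}^k+2\sum_{j=1}^{k}L_{\mathrm{f}}^j$.

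These constants depend only on $k$ and $L_{\mathrm{f}}$. In particular they are independent of $N$, of the initial states, of the parameters, and of the input sequences, as the statement requires. For $k\leq N$ the inputs $\mathbf{u}_j$ with $j\leq k-1$ are always defined, so the bound holds for every horizon $N\in\mathbb{I}_{\geq 0}$.

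I see no real obstacle here. The only points needing care are the convention for the empty maximum at $k=0$ and the norm convention for the joint Lipschitz constant. Global Lipschitz continuity in $\theta$ is what makes the bound global in $x$; under only local Lipschitz continuity (Appendix~\ref{app:bounded_TV}) the same induction would have to be restricted to a compact set of states.
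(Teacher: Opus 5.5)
Your proof is correct and follows the same route as the paper: the one-step Lipschitz bound of $\fw$ is applied recursively. The paper writes this as an unrolled chain of inequalities giving $c_{\mathrm{x},k}=\sum_{j=1}^{k}L_{\mathrm{f}}^j$, while you use a formal induction with the slightly looser but equally valid constant $c_{\mathrm{x},k}=L_{\mathrm{f}}^k+2\sum_{j=1}^{k}L_{\mathrm{f}}^j$. Your explicit base case $c_{\mathrm{x},0}=1$ is a small improvement, since the paper's closed-form constant vanishes at $k=0$, where the left-hand side equals $\|\tilde{x}-x\|$.
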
 
\begin{pf} 
Denote $\tilde{x}_k=x_{\tilde{\mathbf{u}}}(k,\tilde{x},\tilde{\theta})$, $x_k=x_{\mathbf{u}}(k,x,\theta)$, $k\in\mathbb{I}_{\geq 0}$. For any $k\in\mathbb{I}_{\geq 0}$, it holds that 
\begin{align*}
&\|\tilde{x}_{k+1}-x_{k+1}\|
\leq L_{\mathrm{\fw}}(\|\tilde{x}_k-x_k\|+\|\tilde{\theta}-\theta\|+\|\tilde{\mathbf{u}}_k-\mathbf{u}_k\|)\\
\leq& \dots\\
\leq & L_{\mathrm{\fw}}^{k+1}(\|\tilde{x}_0-x_0\|
+\sum_{j=1}^{k+1}L_{\mathrm{\fw}}^j
(\|\tilde{\theta}-\theta\|
+\max_{j\in\mathbb{I}_{[0,k]}}\|\tilde{\mathbf{u}}_j-\mathbf{u}_j\|), 
\end{align*}
which ensures~\eqref{eq:prediction_Lipschitz} with $c_{\mathrm{x},k}=\sum_{j=1}^{k}L_{\mathrm{\fw}}^j$.
\end{pf}
While the constant $c_{\mathrm{x},k}$ in~\eqref{eq:prediction_Lipschitz} depends on the horizon $k$, \change{horizon independent bounds} could also be established by using the fact that exponential stability (Asm.~\ref{ass:stable}) and a compact invariant set (cf. proof Lemma~\ref{lemma:stable_bounded}) imply incremental stability~\cite[Thm.~8]{karapetyan2025closed}. 
\begin{lemma}
\label{lemma:continuity_openloop_cost}
Let Assumptions~\ref{ass:regularity}\ref{item:ass_Lipschitz_differentiable} hold. 
For any $N,M\in\mathbb{I}_{\geq 0}$, $\omega\in\mathbb{R}$, there exists a constant $c_{\mathrm{J}}>0$, such that for all $x,\tilde{x},x_{\mathrm{s}},\tilde{x}_{\mathrm{s}}\in\mathbb{R}^{\nx}$, $\theta,\tilde{\theta}\in\Theta$, $u_{\mathrm{s}},\tilde{u}_{\mathrm{s}}\in\mathbb{U}$, $N\in\mathbb{I}_{\geq 0}$, all $\mathbf{u}\in\mathbb{U}^N$, and any $\epsilon>0$ it holds that
\begin{align}
\label{eq:continuity_openloop_cost}
&\frac{\epsilon}{1+\epsilon}\JN(\tilde{x},\tilde{\theta},\mathbf{u},\tilde{x}_{\mathrm{s}},\tilde{u}_{\mathrm{s}})
-\epsilon\JN(x,\theta,\mathbf{u},x_{\mathrm{s}},u_{\mathrm{s}})\\
\leq & c_{\mathrm{J}}\left[\|x-\tilde{x}\|^2+\|\theta-\tilde{\theta}\|^2+\|u_{\mathrm{s}}-\tilde{u}_{\mathrm{s}}\|^2+\|x_{\mathrm{s}}-\tilde{x}_{\mathrm{s}}\|^2\right].\nonumber
\end{align}
\end{lemma}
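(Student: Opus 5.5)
We prove the bound one stage at a time with Lemma~\ref{lemma:stage_cost_epsilon}, control the state perturbations with Lemma~\ref{lemma:prediction_lipschitz}, and then sum. Fix $\mathbf{u}\in\mathbb{U}^N$ and write $x_k=x_{\mathbf{u}}(k,x,\theta)$ and $\tilde{x}_k=x_{\mathbf{u}}(k,\tilde{x},\tilde{\theta})$ for $k\in\mathbb{I}_{[0,N]}$. The terminal cost $\Vf$ is a sum of stage costs along the rollout with the constant input $\mathbf{u}_{\mathrm{s}}^M$ (resp.\ $\tilde{\mathbf{u}}_{\mathrm{s}}^M$). We therefore view $\JN$ as a sum of $N+M$ stage costs along a concatenated trajectory, with weight $1$ on the first $N$ terms and weight $\omega$ on the last $M$.

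Along this trajectory the input is $\mathbf{u}_k$ for $k<N$ and $u_{\mathrm{s}}$ (resp.\ $\tilde{u}_{\mathrm{s}}$) for $k\geq N$. Lemma~\ref{lemma:prediction_lipschitz}, applied to the concatenated input sequence, gives
\begin{align*}
\|\tilde{x}_k-x_k\|\leq c_{\mathrm{x},k}\left[\|\tilde{x}-x\|+\|\tilde{\theta}-\theta\|+\|\tilde{u}_{\mathrm{s}}-u_{\mathrm{s}}\|\right],\quad k\in\mathbb{I}_{[0,N+M-1]}.
\end{align*}
This is where we need the input-difference term in~\eqref{eq:prediction_Lipschitz}, since $\Vf$ is evaluated along a constant-input rollout whose input differs between the two costs. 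The tail initial states $x_N$ and $\tilde{x}_N$ are themselves perturbed, but continuing the same recursion over the combined horizon handles this.

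Next we apply Lemma~\ref{lemma:stage_cost_epsilon} to each stage with the same $\epsilon>0$. The perturbations are $\tilde{x}_k-x_k$ in the state, $0$ or $\tilde{u}_{\mathrm{s}}-u_{\mathrm{s}}$ in the input, and $\tilde{x}_{\mathrm{s}}-x_{\mathrm{s}}$, $\tilde{u}_{\mathrm{s}}-u_{\mathrm{s}}$ in the setpoint. This gives
\begin{align*}
\tfrac{1}{1+\epsilon}\tilde{\ell}_k\leq \ell_k+\tfrac{1}{\epsilon}\Big[2\sigma_{\max}(Q)\big(\|\tilde{x}_k-x_k\|^2+\|\tilde{x}_{\mathrm{s}}-x_{\mathrm{s}}\|^2\big)+4\sigma_{\max}(R)\|\tilde{u}_{\mathrm{s}}-u_{\mathrm{s}}\|^2+\sigma_\xi\|\tilde{x}_k-x_k\|^2\Big].
\end{align*}
We then multiply by the stage weight ($1$ or $\omega$; the statement allows any $\omega\in\mathbb{R}$, but $\omega>0$ is the case of interest), sum over $k$, and multiply through by $\epsilon$. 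The left-hand side becomes exactly $\frac{\epsilon}{1+\epsilon}\JN(\tilde{x},\ldots)-\epsilon\JN(x,\ldots)$. On the right, the factor $\frac{1}{\epsilon}$ cancels, leaving an $\epsilon$-independent quantity.

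Finally we insert the Lipschitz bound and use $(a+b+c)^2\leq 3(a^2+b^2+c^2)$. One may then take
\begin{align*}
c_{\mathrm{J}}=\max\{1,\omega\}\sum_{k=0}^{N+M-1}\Big[3(2\sigma_{\max}(Q)+\sigma_\xi)c_{\mathrm{x},k}^2+2\sigma_{\max}(Q)+4\sigma_{\max}(R)\Big],
\end{align*}
which depends only on $N,M,\omega$ and the problem data. The main obstacle is bookkeeping rather than any single hard step: Lemma~\ref{lemma:stage_cost_epsilon} must be applied with the state perturbation entering both the quadratic term and the soft-constraint term, while keeping the right-hand side independent of $\epsilon$. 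Linearity in the multiplier $\epsilon$ makes this work, which is why the statement is scaled by $\epsilon$ and $\frac{\epsilon}{1+\epsilon}$.
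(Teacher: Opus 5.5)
Your proposal is correct and follows essentially the same route as the paper: extend $\mathbf{u}$ by $M$ copies of $u_{\mathrm{s}}$ (resp.\ $\tilde{u}_{\mathrm{s}}$), bound the state deviations along the concatenated horizon with Lemma~\ref{lemma:prediction_lipschitz}, apply Lemma~\ref{lemma:stage_cost_epsilon} stagewise after multiplying by $\epsilon$, and sum with weights $1$ and $\omega$. Your restriction to $\omega>0$ is appropriate, since for $\omega<0$ the inequality can fail even with identical arguments whenever $\JN<0$; the paper's argument implicitly relies on $\omega>0$ as well.
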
 
\begin{pf} 
Let us define the extended input sequences $\mathbf{u}',\tilde{\mathbf{u}}'\in\mathbb{U}^{N+M}$ with $\mathbf{u}'_k=\tilde{\mathbf{u}}_k=\mathbf{u}_k$, $k\in\mathbb{I}_{[0,N-1]}$, $\mathbf{u}'_k=u_{\mathrm{s}}$, $\tilde{\mathbf{u}}'_k=\tilde{u}_{\mathrm{s}}$, $k\in\mathbb{I}_{[N,N+M-1]}$. 
Furthermore, we denote the two state predictions by $x_k=x_{\mathbf{u}'}(k,x,\theta)$, $\tilde{x}_k=x_{\tilde{\mathbf{u}}'}(k,\tilde{x},\tilde{\theta})$, $k\in\mathbb{I}_{[0,N+M-1]}$. 
Lemma~\ref{lemma:prediction_lipschitz} in combination with Cauchy Schwarz inequality yields 
\begin{align}
\label{eq:continuity_openloop_cost_step1}
\|x_k-\tilde{x}_k\|^2\leq 3c_{\mathrm{x},k}\left[\|x-\tilde{x}\|^2+\|\theta-\tilde{\theta}\|^2+\|u_{\mathrm{s}}-\tilde{u}_{\mathrm{s}}\|^2\right].
\end{align}
Applying Lemma~\ref{lemma:stage_cost_epsilon} then yields the following bound on the stage cost 
\begin{align*}
&\dfrac{\epsilon}{1+\epsilon}\ell(\tilde{x}_k,\tilde{\mathbf{u}}'_k,\tilde{x}_{\mathrm{s}},\tilde{u}_{\mathrm{s}})
-\epsilon\ell(x_k,\mathbf{u}'_k,x_{\mathrm{s}},u_{\mathrm{s}})\\
\leq&\|x_k-\tilde{x}_k+\tilde{x}_{\mathrm{s}}-x_{\mathrm{s}}\|_Q^2+\|u_{\mathrm{s}}-\tilde{u}_{\mathrm{s}}\|_R^2+\sigma_\xi \|x_k-\tilde{x}_k\|^2\\
\leq&(2\sigma_{\max}(Q)+\sigma_\xi) \|x_k-\tilde{x}_k\|^2+2\|\tilde{x}_{\mathrm{s}}-x_{\mathrm{s}}\|_Q^2+\|u_{\mathrm{s}}-\tilde{u}_{\mathrm{s}}\|_R^2.
\end{align*}
Applying this bound in the definition of the finite-horizon cost~\eqref{eq:V_f}, \eqref{eq:J_N} and applying~\eqref{eq:continuity_openloop_cost_step1} yields~\eqref{eq:continuity_openloop_cost} with $c_{\mathrm{J}}>0$. 
\end{pf}
\begin{lemma}
\label{lemma:continuity_steady_state_theta}
Let Assumptions~\ref{ass:regularity}\ref{item:ass_steady_state} and \ref{ass:steady_regular}\ref{item:ass_unique}, \ref{item:ass_regularity_steady_state_2} hold.
For any $\theta,\tilde{\theta}\in\Theta$, any $(x_{\mathrm{s}},u_{\mathrm{s}},y_{\mathrm{s}})\in\mathbb{S}(\theta)$, there exists $(\tilde{x}_{\mathrm{s}},\tilde{u}_{\mathrm{s}},\tilde{y}_{\mathrm{s}})\in\mathbb{S}(\tilde{\theta})$:
\begin{align}
\label{eq:continuity_steady_state_theta}
&\|\tilde{x}_{\mathrm{s}}-x_{\mathrm{s}}\|_Q^2+\|\tilde{u}_{\mathrm{s}}-u_{\mathrm{s}}\|_R^2+\|\tilde{y}_{\mathrm{s}}-y_{\mathrm{s}}\|_T^2\leq c_{\mathrm{s}}\|\theta-\tilde{\theta}\|^2.
\end{align} 
\end{lemma}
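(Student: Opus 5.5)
The plan is to build $(\tilde{x}_{\mathrm{s}},\tilde{u}_{\mathrm{s}},\tilde{y}_{\mathrm{s}})$ in two steps: first pick a nearby feasible output using Assumption~\ref{ass:steady_regular}\ref{item:ass_regularity_steady_state_2}, then recover the state and input through the maps $\gyx,\gyu$ from Assumption~\ref{ass:steady_regular}\ref{item:ass_unique}.

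Fix $\theta,\tilde{\theta}\in\Theta$ and $(x_{\mathrm{s}},u_{\mathrm{s}},y_{\mathrm{s}})\in\mathbb{S}(\theta)$. By Assumption~\ref{ass:regularity}\ref{item:ass_steady_state}, the set $\mathbb{S}(\tilde{\theta})$ is non-empty, closed and bounded, hence compact. The map $(\tilde{x},\tilde{u},\tilde{y})\mapsto\|y_{\mathrm{s}}-\tilde{y}\|_T^2$ is continuous, so the minimum in~\eqref{eq:regularity_steady_state} is attained. Let $(\tilde{x}_{\mathrm{s}},\tilde{u}_{\mathrm{s}},\tilde{y}_{\mathrm{s}})\in\mathbb{S}(\tilde{\theta})$ be a minimizer. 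Then~\eqref{eq:regularity_steady_state} gives directly
\[
\|\tilde{y}_{\mathrm{s}}-y_{\mathrm{s}}\|_T^2\leq L_{\mathrm{s}}\|\theta-\tilde{\theta}\|^2 .
\]

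Next, Assumption~\ref{ass:steady_regular}\ref{item:ass_unique} gives $x_{\mathrm{s}}=\gyx(y_{\mathrm{s}},\theta)$ and $u_{\mathrm{s}}=\gyu(y_{\mathrm{s}},\theta)$, because the triple lies in $\mathbb{S}(\theta)$. In the same way, $\tilde{x}_{\mathrm{s}}=\gyx(\tilde{y}_{\mathrm{s}},\tilde{\theta})$ and $\tilde{u}_{\mathrm{s}}=\gyu(\tilde{y}_{\mathrm{s}},\tilde{\theta})$, because the minimizer lies in $\mathbb{S}(\tilde{\theta})$. The Lipschitz bound~\eqref{eq:g_Lipschitz} then yields
\[
\|\tilde{x}_{\mathrm{s}}-x_{\mathrm{s}}\|_Q^2+\|\tilde{u}_{\mathrm{s}}-u_{\mathrm{s}}\|_R^2\leq L_{\mathrm{g}}\bigl(\|\tilde{y}_{\mathrm{s}}-y_{\mathrm{s}}\|_T^2+\|\theta-\tilde{\theta}\|^2\bigr)\leq L_{\mathrm{g}}(L_{\mathrm{s}}+1)\|\theta-\tilde{\theta}\|^2 .
\]

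Adding the output bound to this state–input bound gives~\eqref{eq:continuity_steady_state_theta} with
\[
c_{\mathrm{s}}:=L_{\mathrm{g}}(L_{\mathrm{s}}+1)+L_{\mathrm{s}} .
\]

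No step here is a real obstacle. The only point needing care is that the minimum in~\eqref{eq:regularity_steady_state} is actually attained, so that a concrete element of $\mathbb{S}(\tilde{\theta})$ exists; compactness from Assumption~\ref{ass:regularity}\ref{item:ass_steady_state} settles this.
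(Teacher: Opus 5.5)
Your proposal is correct and follows the paper's proof: pick a minimizer from Assumption~\ref{ass:steady_regular}\ref{item:ass_regularity_steady_state_2}, then apply the Lipschitz bound~\eqref{eq:g_Lipschitz} on $\gyx,\gyu$. This yields the same constant $c_{\mathrm{s}}=L_{\mathrm{s}}(L_{\mathrm{g}}+1)+L_{\mathrm{g}}$. Your one addition, that the minimum is attained by compactness of $\mathbb{S}(\tilde{\theta})$ from Assumption~\ref{ass:regularity}\ref{item:ass_steady_state}, is a detail the paper leaves implicit.
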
 
\begin{pf}
Pick $(\tilde{x}_{\mathrm{s}},\tilde{u}_{\mathrm{s}},\tilde{y}_{\mathrm{s}})\in\mathbb{S}(\tilde{\theta})$ according to Assumption~\ref{ass:steady_regular}\ref{item:ass_regularity_steady_state_2}. 
The Lipschitz continuous mapping $(x,u)=\gy(y,\theta)$ from Assumption~\ref{ass:steady_regular}\ref{item:ass_unique} ensures 
\begin{align*}
&\|\tilde{x}_{\mathrm{s}}-x_{\mathrm{s}}\|_Q^2+\|\tilde{u}_{\mathrm{s}}-u_{\mathrm{s}}\|_R^2+\|\tilde{y}_{\mathrm{s}}-y_{\mathrm{s}}\|_T^2\\
\stackrel{\eqref{eq:g_Lipschitz}}{\leq}& (L_{\mathrm{\gy}}+1) \|\tilde{y}_{\mathrm{s}}-y_{\mathrm{s}}\|_T^2+L_{\mathrm{\gy}}\|\theta-\tilde{\theta}\|^2\\
\stackrel{\eqref{eq:regularity_steady_state}}{\leq} & 
(L_{\mathrm{s}}(L_{\mathrm{\gy}}+1)+L_{\mathrm{\gy}})\|\theta-\tilde{\theta}\|^2. 
\end{align*}
\end{pf}}{%
We first establish a continuity bound on the optimal cost.}
\begin{lemma}
\label{lemma:continuity_opt_cost} 
Let Assumptions~\ref{ass:regularity}\ref{item:ass_steady_state}, \ref{item:ass_Lipschitz_differentiable} and \ref{ass:steady_regular}\ref{item:ass_unique}, \ref{item:ass_regularity_steady_state_2} hold.
There exists a constant $c_{\mathrm{J}^\star}>0$, such that for any $x,\tilde{x}\in\mathbb{R}^{\nx}$, $\theta,\tilde{\theta}\in\Theta$, and any $\epsilon>0$, the optimal cost $\JN^\star$ from Problem~\eqref{eq:MPC} satisfies
\begin{align}
\label{eq:continuity_opt_cost}
\dfrac{\epsilon}{1+\epsilon}\JN^\star(\tilde{x},\tilde{\theta})\leq \epsilon \JN^\star(x,\theta)+c_{\mathrm{J}^\star}\left[\|x-\tilde{x}\|^2+\|\theta-\tilde{\theta}\|^2\right]
\end{align}
\end{lemma}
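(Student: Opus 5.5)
The plan is to build, from the minimizer of Problem~\eqref{eq:MPC} at $(x,\theta)$, a feasible candidate for the problem at $(\tilde{x},\tilde{\theta})$. Then we bound the candidate's cost with Lemma~\ref{lemma:continuity_openloop_cost} and Lemma~\ref{lemma:continuity_steady_state_theta}.

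\textbf{Step 1: candidate construction.} Let $\mathbf{u}^\star=\mathbf{u}^\star_{x,\theta}$ and $(x_{\mathrm{s}}^\star,u_{\mathrm{s}}^\star,y_{\mathrm{s}}^\star)\in\mathbb{S}(\theta)$ be the minimizer at $(x,\theta)$. By Lemma~\ref{lemma:continuity_steady_state_theta}, there is $(\tilde{x}_{\mathrm{s}},\tilde{u}_{\mathrm{s}},\tilde{y}_{\mathrm{s}})\in\mathbb{S}(\tilde{\theta})$ with
\begin{align*}
\|\tilde{x}_{\mathrm{s}}-x_{\mathrm{s}}^\star\|_Q^2+\|\tilde{u}_{\mathrm{s}}-u_{\mathrm{s}}^\star\|_R^2+\|\tilde{y}_{\mathrm{s}}-y_{\mathrm{s}}^\star\|_T^2\leq c_{\mathrm{s}}\|\theta-\tilde{\theta}\|^2.
\end{align*}
The pair $(\mathbf{u}^\star,(\tilde{x}_{\mathrm{s}},\tilde{u}_{\mathrm{s}},\tilde{y}_{\mathrm{s}}))$ is feasible for the problem at $(\tilde{x},\tilde{\theta})$, because $\mathbf{u}^\star\in\mathbb{U}^N$ does not depend on the parameter. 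Hence
\begin{align*}
\JN^\star(\tilde{x},\tilde{\theta})\leq \JN(\tilde{x},\tilde{\theta},\mathbf{u}^\star,\tilde{x}_{\mathrm{s}},\tilde{u}_{\mathrm{s}})+\|\tilde{y}_{\mathrm{s}}-y_{\mathrm{d}}\|_T^2.
\end{align*}

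\textbf{Step 2: bounding the two terms.} Multiply by $\epsilon/(1+\epsilon)$ and treat the terms separately.

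For the open-loop cost, Lemma~\ref{lemma:continuity_openloop_cost} gives
\begin{align*}
\tfrac{\epsilon}{1+\epsilon}\JN(\tilde{x},\tilde{\theta},\mathbf{u}^\star,\tilde{x}_{\mathrm{s}},\tilde{u}_{\mathrm{s}})\leq \epsilon \JN(x,\theta,\mathbf{u}^\star,x_{\mathrm{s}}^\star,u_{\mathrm{s}}^\star)+c_{\mathrm{J}}\left[\|x-\tilde{x}\|^2+\|\theta-\tilde{\theta}\|^2+\|u_{\mathrm{s}}^\star-\tilde{u}_{\mathrm{s}}\|^2+\|x_{\mathrm{s}}^\star-\tilde{x}_{\mathrm{s}}\|^2\right].
\end{align*}
The setpoint differences are at most $c_{\mathrm{s}}\|\theta-\tilde{\theta}\|^2/\min\{\sigma_{\min}(Q),\sigma_{\min}(R)\}$.

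For the offset cost, the same Young-type inequality as in Lemma~\ref{lemma:stage_cost_epsilon} gives
\begin{align*}
\tfrac{1}{1+\epsilon}\|\tilde{y}_{\mathrm{s}}-y_{\mathrm{d}}\|_T^2\leq \|y_{\mathrm{s}}^\star-y_{\mathrm{d}}\|_T^2+\tfrac{1}{\epsilon}\|\tilde{y}_{\mathrm{s}}-y^\star_{\mathrm{s}}\|_T^2.
\end{align*}
Multiplying by $\epsilon$, the offset part contributes at most $\epsilon\|y_{\mathrm{s}}^\star-y_{\mathrm{d}}\|_T^2+c_{\mathrm{s}}\|\theta-\tilde{\theta}\|^2$.

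\textbf{Step 3: combining.} Summing the two bounds and using $\JN^\star(x,\theta)=\JN(x,\theta,\mathbf{u}^\star,x_{\mathrm{s}}^\star,u_{\mathrm{s}}^\star)+\|y_{\mathrm{s}}^\star-y_{\mathrm{d}}\|_T^2$ gives~\eqref{eq:continuity_opt_cost}. The constant is $c_{\mathrm{J}^\star}=c_{\mathrm{J}}(1+c_{\mathrm{s}}/\min\{\sigma_{\min}(Q),\sigma_{\min}(R)\})+c_{\mathrm{s}}$.

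\textbf{Main obstacle.} The delicate point is that the constants must be uniform in $\epsilon$. This requires Lemma~\ref{lemma:continuity_openloop_cost} to carry exactly the scaling $\epsilon/(1+\epsilon)$ versus $\epsilon$ with an $\epsilon$-independent $c_{\mathrm{J}}$. It also requires the offset term to be handled with the matching scaling, which the Young inequality above provides.

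A secondary check is that the candidate setpoint from Assumption~\ref{ass:steady_regular}\ref{item:ass_regularity_steady_state_2} attains the minimum, so that Lemma~\ref{lemma:continuity_steady_state_theta} applies. This holds because $\mathbb{S}(\tilde{\theta})$ is closed and bounded by Assumption~\ref{ass:regularity}\ref{item:ass_steady_state}.
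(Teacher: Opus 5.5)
Your proposal is correct and follows the paper's proof essentially step by step. You use the same candidate (the optimal input sequence at $(x,\theta)$ paired with the nearby setpoint from Lemma~\ref{lemma:continuity_steady_state_theta}), bound the open-loop cost with Lemma~\ref{lemma:continuity_openloop_cost}, and treat the offset term with the same Young-type inequality; the only difference is that the paper bounds the three setpoint-difference terms jointly to get the slightly smaller constant $c_{\mathrm{J}}+c_{\mathrm{s}}\max\{1,c_{\mathrm{J}}/\min\{\sigma_{\min}(Q),\sigma_{\min}(R)\}\}$, while your summed constant is equally valid.
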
 
\ifbool{arxiv}{\begin{pf}
For $(x,\theta)$, the optimal solution to Problem~\eqref{eq:MPC} is given by $\JN^\star(x,\theta)$, $\mathbf{u}^\star_{x,\theta}\in\mathbb{U}^N$, and $(x_{\mathrm{s},x,\theta}^\star,u_{\mathrm{s},x,\theta}^\star,y_{\mathrm{s},x,\theta}^\star)\in\mathbb{S}(\theta)$. 
Consider $(\tilde{x}_{\mathrm{s}},\tilde{u}_{\mathrm{s}},\tilde{y}_{\mathrm{s}})\in\mathbb{S}(\tilde{\theta})$ according to Lemma~\ref{lemma:continuity_steady_state_theta}, i.e.,
\begin{align}
\label{eq:continuity_steady_state_theta_in_proof}
&\|\tilde{x}_{\mathrm{s}}-x_{\mathrm{s},x,\theta}\|_Q^2+\|\tilde{u}_{\mathrm{s}}-u_{\mathrm{s},x,\theta}\|_R^2+\|\tilde{y}_{\mathrm{s}}-y_{\mathrm{s},x,\theta}\|_T^2\leq c_{\mathrm{s}}\|\theta-\tilde{\theta}\|^2.
\end{align}
We derive an upper bound for $\JN^\star(\tilde{x},\tilde{\theta})$ by using the feasible candidate solution $\tilde{\mathbf{u}}=\mathbf{u}^\star_{x,\theta}\in\mathbb{U}^N$, $(\tilde{x}_{\mathrm{s}},\tilde{u}_{\mathrm{s}},\tilde{y}_{\mathrm{s}})\in\mathbb{S}(\tilde{\theta})$
 to Problem~\eqref{eq:MPC}:
\begin{align*}
&\dfrac{\epsilon}{1+\epsilon}\JN^\star(\tilde{x},\tilde{\theta})\\
\leq&\dfrac{\epsilon}{1+\epsilon} \JN(\tilde{x},\tilde{\theta},\tilde{\mathbf{u}},\tilde{x}_{\mathrm{s}},\tilde{u}_{\mathrm{s}})
+\dfrac{\epsilon}{1+\epsilon}\|\tilde{y}_{\mathrm{s}}-y_{\mathrm{d}}\|_T^2\\
\stackrel{\eqref{eq:continuity_openloop_cost}}{\leq}& \epsilon \JN(x,\theta,\mathbf{u}^\star_{x,\theta},x^\star_{\mathrm{s},x,\theta},u^\star_{\mathrm{s},x,\theta})+\epsilon\|y_{\mathrm{s},x,\theta}^\star-y_{\mathrm{d}}\|_T^2\\
 &+ c_{\mathrm{J}}\|x-\tilde{x}\|^2+ c_{\mathrm{J}}\|\theta-\tilde{\theta}\|^2\\
&+ c_{\mathrm{J}}\|u^\star_{\mathrm{s},x,\theta}-\tilde{u}_{\mathrm{s}}\|^2+ c_{\mathrm{J}}\|x^\star_{\mathrm{s},x,\theta}-\tilde{x}_{\mathrm{s}}\|^2
+\|y_{\mathrm{s},x,\theta}^\star-\tilde{y}_{\mathrm{s}}\|_T^2\\
\stackrel{\eqref{eq:continuity_steady_state_theta_in_proof}}{\leq} &\epsilon \JN^\star(x,\theta)
+c_{\mathrm{J}}\|x-\tilde{x}\|^2\\
&+\underbrace{\left(c_{\mathrm{J}}+c_{\mathrm{s}}\max\left\{1,\dfrac{c_{\mathrm{J}}}{\min\{\sigma_{\min}(Q),\sigma_{\min}(R)\}}\right\}\right)}_{c_{\mathrm{J}^\star}}\|\theta-\tilde{\theta}\|^2, 
\end{align*} 
where, analogous to Lemma~\ref{lemma:stage_cost_epsilon}, the second inequality applied Cauchy-Schwarz and Young’s inequality to the quadratic term.
\end{pf}}{%
This bound follows from the considered regularity conditions and the fact that Problem~\eqref{eq:MPC} only has soft state constraints, see~\cite[App.~C]{adaptive_arxiv} for details.}

\textit{Proof of Thm.~\ref{thm:robust_stability_global}:}
The closed-loop dynamics~\eqref{eq:closed_loop_adaptive} satisfy: 
\begin{align}
\label{eq:robust_stability_proof_1}
\hat{x}_{k+1}=&\fw(\hat{x}_k-v_k,\pi(\hat{x}_k,\hat{\theta}_k),\theta_k,w_k)-v_{k+1}\\
=&\fw(\hat{x}_k,\pi(\hat{x}_k,\hat{\theta}_k),\hat{\theta}_k,0)+\tilde{w}_k+\tilde{x}_{1|k},~ k\in\mathbb{I}_{\geq 0},\nonumber
\end{align}
with $\tilde{x}_{1|k}$ and $\tilde{w}_k$ defined in~\eqref{eq:LMS_tilde}. 
Inequality~\eqref{eq:value_decrease_nominal} from Corollary~\ref{corol:nominal} applied to $(\hat{x}_k,\hat{\theta}_k$) ensures the following nominal decrease
\begin{align}
\label{eq:robust_stability_proof_3}
&\JN^\star(\fw(\hat{x}_k,\pi(\hat{x}_k,\hat{\theta}_k),\hat{\theta}_k,0),\hat{\theta}_k)-\JN^\star(\hat{x}_k,\hat{\theta}_k)\nonumber\\
\leq& -\alpha\underline{c}_1 \|\hat{x}_k-x_{\mathrm{rd},\hat{\theta}_k}\|_Q^2.
\end{align}	
with $\underline{c}_1:=\frac{\min\{\overline{a},1\}}{4}>0$ and $\alpha>0$ from Theorem~\ref{thm:stable_artificial}. 
Combining this with the continuity bound from Lemma~\ref{lemma:continuity_opt_cost} and the prediction error~\eqref{eq:robust_stability_proof_1} yields
\begin{align}
\label{eq:robust_stabiltity_proof_4}
&\JN^\star(\hat{x}_{k+1},\hat{\theta}_{k+1})\\
\stackrel{\eqref{eq:continuity_opt_cost},\eqref{eq:robust_stability_proof_1}}{\leq}& (1+\epsilon) \JN^\star(\fw(\hat{x}_k,\pi(\hat{x}_k,\hat{\theta}_k),\hat{\theta}_k,0),\hat{\theta}_k)\nonumber\\
&+\dfrac{1+\epsilon}{\epsilon}c_{\mathrm{J}^\star}\left[\|\tilde{w}_k+\tilde{x}_{1|k}\|^2+\|\hat{\theta}_{k+1}-\hat{\theta}_k\|^2\right]\nonumber\\
\stackrel{\eqref{eq:robust_stability_proof_3}}{\leq} &(1+\epsilon) \JN^\star(\hat{x}_k,\hat{\theta}_k)-\alpha\underline{c}_1 \|\hat{x}_k-x_{\mathrm{rd},\hat{\theta}_k}\|_Q^2\nonumber\\
&+\dfrac{1+\epsilon}{\epsilon}c_{\mathrm{J}^\star}\left[\|\tilde{w}_k+\tilde{x}_{1|k}\|^2+\|\hat{\theta}_{k+1}-\hat{\theta}_k\|^2\right].\nonumber
\end{align} 
Due to Assumption~\ref{ass:steady_regular}\ref{item:ass_regularity_steady_state_1}, \ifbool{arxiv}{%
\begin{align*}
V(\hat{x}_k,\hat{\theta}_k)=\JN^\star(\hat{x}_k,\hat{\theta}_k)-\underbrace{\|y_{\mathrm{rd},\hat{\theta}_k}-y_{\mathrm{d}}\|_T^2}_{=0},
\end{align*}
i.e., the Lyapunov function from Corollary~\ref{corol:nominal} is simply the optimal cost $\JN^\star$.}{we have $V\equiv\JN^\star$.} 
Inequalities~\eqref{eq:value_lower_bound}--\eqref{eq:value_upper_bound} ensure 
\begin{align}
\label{eq:robust_stabiltity_proof_6}
\underline{c}_1\|\hat{x}_k-x_{\mathrm{rd},\hat{\theta}_k}\|_Q^2\leq \JN^\star(\hat{x}_k,\hat{\theta}_k)\leq \gamma_N\|\hat{x}_k-x_{\mathrm{rd},\hat{\theta}_k}\|_Q^2.
\end{align} 
Thus, we have 
\begin{align}
\label{eq:robust_stabiltity_proof_7}
&\epsilon \JN^\star(\hat{x}_k,\hat{\theta}_k)-\alpha\underline{c}_1\|\hat{x}_k-x_{\mathrm{rd},\hat{\theta}_k}\|_Q^2\\
\stackrel{\eqref{eq:robust_stabiltity_proof_6}}\leq & -\left[\alpha\underline{c}_1-\gamma_N\epsilon\right]\|\hat{x}_k-x_{\mathrm{rd},\hat{\theta}_k}\|_Q^2\nonumber\\
=& -\alpha\underline{c}_1/2\|\hat{x}_k-x_{\mathrm{rd},\hat{\theta}_k}\|_Q^2\nonumber\\
\leq &-(1-\rho_{\mathrm{V}}) \JN^\star(\hat{x}_k,\hat{\theta}_k),\nonumber
\end{align}
where the second-to-last equality holds by choosing $\epsilon=\alpha\underline{c}_1/(2\gamma_N)>0$ and the last equality holds with $\rho_{\mathrm{V}}:=1-\alpha\underline{c}_1/(2 \gamma_N)\in(0,1)$. 
Plugging Inequality~\eqref{eq:robust_stabiltity_proof_7} in \eqref{eq:robust_stabiltity_proof_4} yields~\eqref{eq:Lyap_decrease_robust} with $c_{\mathrm{V}}=\dfrac{1+\epsilon}{\epsilon}c_{\mathrm{J}^\star}$.

\ifbool{arxiv}{\newpage}{}
\section{Proofs of Section~\ref{sec:global_adaptive}}
\label{app:proof_global_adaptive}
\ifbool{arxiv}{%
The following lemma provides a further characterization of the steady-states, which is needed to show Lemma~\ref{lemma:stable_bounded}. 
\begin{lemma}
\label{lemma:Lipschitz_steady_state}
Let Assumptions~\ref{ass:regularity}\ref{item:ass_steady_state}, \ref{item:ass_Lipschitz_differentiable}, and \ref{ass:stable} hold. 
There exists a function $\gu:\mathbb{U}\times \Theta\rightarrow\mathbb{R}^{\nx}$, such that for all $(u,\theta)\in\mathbb{U}\times \Theta$, $x=\gu(u,\theta)$ satisfies $x=\fw(x,u,\theta,0)$. 
Furthermore, $\gu$ is Lipschitz continuous with constant $L_{\mathrm{\gu}}\geq 0$ and the set of steady-states for $(u,\theta)\in\mathbb{U}\times\Theta$ is compact. 
\end{lemma}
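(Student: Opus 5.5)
The plan is to obtain $\gu(u,\theta)$ as the limit of the constant-input open-loop trajectory and to derive all claimed properties from Assumption~\ref{ass:stable}.

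\textbf{Existence and uniqueness.} Fix $(u,\theta)\in\mathbb{U}\times\Theta$. Let $F(x):=\fw(x,u,\theta,0)$. I would show that a suitable iterate of $F$ is a contraction, so that Banach's fixed-point theorem applies. Assumption~\ref{ass:stable} only gives contraction toward an \emph{existing} steady state, so existence has to be settled first. One route is to take the pair $(x_{\mathrm{s}},u_{\mathrm{s}})$ from some element of the nonempty set $\mathbb{S}(\theta)$ (Assumption~\ref{ass:regularity}\ref{item:ass_steady_state}). Uniqueness at a fixed $u$ is immediate: if $x_{\mathrm{s}},x_{\mathrm{s}}'$ are both fixed points of $F$, apply \eqref{eq:exp_stable} with $x=x_{\mathrm{s}}'$ to get $\|x_{\mathrm{s}}'-x_{\mathrm{s}}\|\le\Crho\rho^k\|x_{\mathrm{s}}'-x_{\mathrm{s}}\|\to0$.

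For existence at a general $u$, I would use a homotopy/continuation argument along the segment $u_\lambda=u_{\mathrm{s}}+\lambda(u-u_{\mathrm{s}})$, which lies in $\mathbb{U}$ by convexity (Assumption~\ref{ass:regularity}\ref{item:ass_Lipschitz_differentiable}). Define $\Lambda$ as the set of $\lambda\in[0,1]$ admitting a steady state; it contains $0$.

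\textbf{Key estimate (openness of $\Lambda$).} Suppose $x_\lambda$ is a steady state for $u_\lambda$. Choose $K$ with $\Crho\rho^K\le1/2$. Comparing the $K$-step rollout under $u_{\lambda'}$ with that under $u_\lambda$ via Lemma~\ref{lemma:prediction_lipschitz} gives
\[
\|x_{\mathbf{u}_{\lambda'}^K}(K,x,\theta)-x_\lambda\|\le \tfrac12\|x-x_\lambda\|+c_{\mathrm{x},K}|\lambda-\lambda'|\,\|u-u_{\mathrm{s}}\|.
\]
Hence the $K$-step map maps the ball of radius $2c_{\mathrm{x},K}|\lambda-\lambda'|\|u-u_{\mathrm{s}}\|$ around $x_\lambda$ into itself. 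By Brouwer (the map is continuous), it has a fixed point $z$.

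Then $z,F_{\lambda'}(z),\dots,F_{\lambda'}^{K-1}(z)$ is a $K$-periodic orbit. Applying \eqref{eq:exp_stable} cannot be done directly without a steady state, and this is the main obstacle. To get around it, I would instead argue via the derivative: differentiability together with \eqref{eq:exp_stable} at $x_\lambda$ implies the Jacobian $\partial_x\fw$ at $(x_\lambda,u_\lambda)$ is Schur. The implicit function theorem applied to $x-\fw(x,u_{\lambda'},\theta,0)=0$ then yields a local steady-state branch, so $\Lambda$ is open.

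\textbf{Closedness, compactness, and Lipschitz continuity.} For closedness of $\Lambda$, I would use the uniform Lipschitz bound below together with completeness. Iterating the ball estimate gives $\|x_{\lambda'}-x_\lambda\|\le 2c_{\mathrm{x},K}\|u_{\lambda'}-u_\lambda\|$; let $\lambda_n\to\lambda$ with steady states $x_{\lambda_n}$. These form a Cauchy sequence, and the limit is a steady state by continuity of $\fw$. Since $\Lambda$ is nonempty, open and closed in $[0,1]$, we get $\Lambda=[0,1]$, which defines $\gu$.

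The Lipschitz property with respect to $(u,\theta)$ follows from the same estimate. Take the steady state $\tilde x$ for $(\tilde u,\tilde\theta)$ and the rollout under $(u,\theta)$ started at $\tilde x$. The $K$-step bound from \eqref{eq:exp_stable} and Lemma~\ref{lemma:prediction_lipschitz} gives
\[
\|\tilde x-x\|\le \tfrac12\|\tilde x-x\|+c_{\mathrm{x},K}\bigl(\|\tilde\theta-\theta\|+\|\tilde u-u\|\bigr),
\]
so $L_{\gu}=2c_{\mathrm{x},K}$. Compactness of the set of steady states follows because it is the continuous image $\gu(\mathbb{U}\times\Theta)$ of a compact set.
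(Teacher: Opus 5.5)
Your proof is correct, and it reaches the conclusion by a somewhat different route than the paper. The paper obtains $\gu$ from the implicit function theorem, citing convexity of $\mathbb{U}\times\Theta$ and one known steady state as in Limon et al., and leaves the passage from local to global existence implicit. It then proves Lipschitz continuity by uniformly bounding the Jacobian of $\gu$ from the implicit-function formula, using that exponential stability yields $\|(I-f_x)^{-1}\|\leq\sigma$. You share the same local core: your Schur claim follows from differentiating the $k$-step map at the fixed point, which gives $\|A^k\|\leq\Crho\rho^k$ and hence $\|(I-A)^{-1}\|\leq\Crho/(1-\rho)$, i.e., the paper's $\sigma$. Beyond that, you do two things differently. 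First, you make global existence explicit through the open--closed argument on $\Lambda$. Second, you replace the Jacobian bound by the derivative-free two-point estimate between steady states, obtained from $\Crho\rho^K\leq\tfrac12$ and Lemma~\ref{lemma:prediction_lipschitz}. This gives you a self-contained closedness step and a Lipschitz argument that needs neither differentiability of $\gu$ nor a path inside $\mathbb{U}\times\Theta$ (convexity is used only for the continuation). In exchange, the paper's derivative route gives a constant of order $\frac{\Crho}{1-\rho}L_{\mathrm{f}}$. Your $2c_{\mathrm{x},K}=2\sum_{j=1}^K L_{\mathrm{f}}^j$ can instead grow exponentially in $K$ when $L_{\mathrm{f}}>1$, which is irrelevant for the qualitative statement. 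Two cosmetic points. The Brouwer/periodic-orbit detour can be deleted entirely, since the implicit function theorem already gives openness. And what you call ``iterating the ball estimate'' is just the $K$-step estimate evaluated at $x=x_{\lambda'}$, which is a fixed point of the $K$-step map under $u_{\lambda'}$.
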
 
\begin{pf}
The proof builds on ideas from~\cite{limon2018nonlinear,berberich2022linear_model}. 
The function $\gu$ needs to satisfy $\fw(\gu(u,\theta),u,\theta,0)-\gu(u,\theta)=0$ for all $(u,\theta)\in\mathbb{U}\times\Theta$. 
Analogous to~\cite{limon2018nonlinear}, existence of this function follows from the implicit function theorem, taking into account stability (Asm.~\ref{ass:stable}), differentiability (Asm.~\ref{ass:regularity}\ref{item:ass_Lipschitz_differentiable}), existence of some steady-state (Asm.~\ref{ass:regularity}\ref{item:ass_steady_state}), and convexity of the set $\mathbb{U}\times\Theta$ (Asm.~\ref{ass:regularity}\ref{item:ass_Lipschitz_differentiable}). 
Let us denote the Jacobian of $\fw$ w.r.t. $\{x,u,\theta\}$ by $\{f_x,f_u,f_\theta\}$ and the Jacobian of $\gu$ by $\gu'$, where we omit the arguments for simplicity. 
Using the implicit function theorem, the Jacobian satisfies $\gu'=(I-f_x)^{-1}[f_u^\top,f_\theta^\top]^\top$. 
Exponential stability (Asm.~\ref{ass:stable}) and continuous differentiability (Asm.~\ref{ass:regularity}\ref{item:ass_Lipschitz_differentiable}) imply that $\|(I-f_x)^{-1}\|\leq \sigma$, with some \change{constant} $\sigma>0$, see also~\cite[Asm.~4]{berberich2022linear_model}.
Furthermore, $\|[f_u^\top,f_\theta^\top]^\top\|$ is \change{bounded} given Assumption~\ref{ass:regularity}\ref{item:ass_Lipschitz_differentiable}. 
Thus, $\gu'$ is \change{bounded} and $\gu$ is Lipschitz continuous. 
Compactness of the set of steady-states follows from Lipschitz continuity of $\gu$ and compactness of $\mathbb{U}\times\Theta$. 
\end{pf}

\textit{Proof of Lemma~\ref{lemma:stable_bounded}:}
First, we derive a compact set $\mathcal{R}$ such that $x_k\in\mathcal{R}$ $\forall k\in\mathbb{I}_{\geq 0}$ by adapting the results in~\cite[Lemmas~1--2]{nonhoff2024online}. 
Then we compute a gain $\Gamma\succ 0$ satisfying Assumption~\ref{ass:param_gain}. \\
Given Lemma~\ref{lemma:Lipschitz_steady_state}, recall that an input $u\in\mathbb{U}$ and parameter $\theta\in\Theta$ uniquely define a corresponding steady-state $x_{\mathrm{s}}=\gu(u,\theta)$. 
Consider the Lyapunov function
\begin{align}
\label{eq:Lyap_converse}
W(x,\theta,u):=\sum_{k=0}^{M-1}\|x_{\mathbf{u}^k}(k,x,\theta)-\gu(u,\theta)\|
\end{align}
with any fixed finite horizon $M\in\mathbb{I}_{\geq 0}$ satisfying $\Crho\rho^M<1$ with $\Crho>0$, $\rho\in[0,1)$ from Assumption~\ref{ass:stable}. 
Analogous to~\cite[Lemma~1]{nonhoff2024online}, for any $(x,\theta,u)\in \mathbb{R}^{\nx}\times \Theta \times \mathbb{U}$ and $x_{\mathrm{s}}=\gu(u,\theta)$:
\begin{align}
\label{eq:Lyap_converse_bounds}
&\|x-x_{\mathrm{s}}\|\leq W(x,\theta,u)\stackrel{\eqref{eq:exp_stable}}{\leq} \underbrace{\dfrac{\Crho}{1-\rho}}_{=:c_2}\|x-x_{\mathrm{s}}\|,\\
&W(\fw(x,u,\theta,0),\theta,u)-W(x,\theta,u)\nonumber\\
\label{eq:Lyap_converse_contraction}
\stackrel{\eqref{eq:exp_stable}}{\leq}& -\underbrace{(1-\Crho\rho^M)}_{=:c_3>0} \|x-x_{\mathrm{s}}\|\leq -(1-\lambda)W(x,\theta,u),
\end{align}
with the exponential contraction rate $\lambda=1-\frac{c_3}{c_2}\in(0,1)$. 
Furthermore, for any $(x,\theta,u),(\tilde{x},\tilde{\theta},\tilde{u})\in \mathbb{R}^{\nx}\times\Theta\times\mathbb{U}$ with steady-state $x_{\mathrm{s}}=\gu(u,\theta)$, $\tilde{x}_{\mathrm{s}}=\gu(\tilde{u},\tilde{\theta})$, it holds that
\begin{align}
\label{eq:Lyap_converse_cont_step}
&W(x,\theta,u)-W(\tilde{x},\tilde{\theta},\tilde{u})\nonumber\\
\leq& \sum_{k=0}^{M-1}\|x_{\mathbf{u}^k}(k,x,\theta)-x_{\tilde{\mathbf{u}}^k}(k,\tilde{x},\tilde{\theta})\|+\|x_{\mathrm{s}}-\tilde{x}_{\mathrm{s}}\|.
\end{align}
For any $k\in\mathbb{I}_{\geq 1}$, Lipschitz continuity of $\fw$ implies 
\begin{align*}
&\|x_{\mathbf{u}^k}(k,x,\theta)-x_{\tilde{\mathbf{u}}^k}(k,\tilde{x},\tilde{\theta})\|\\
\leq& L_{\mathrm{f}}\|x_{\mathbf{u}^k}(k-1,x,\theta)-x_{\tilde{\mathbf{u}}^k}(k-1,\tilde{x},\tilde{\theta})\|\\
&+ L_{\mathrm{f}}(\|\theta-\tilde{\theta}\|+\|u-\tilde{u}\|)\\
\leq &\dots \leq L_{\mathrm{f}}^k\|x-\tilde{x}\|+(\|\theta-\tilde{\theta}\|+\|u-\tilde{u}\|) \sum_{j=1}^k L_{\mathrm{f}}^j.
\end{align*}
Plugging this bound into~\eqref{eq:Lyap_converse_cont_step} and using Lipschitz continuity of $\gu$ (Lemma~\ref{lemma:Lipschitz_steady_state}) yields
\begin{align}
\label{eq:Lyap_converse_cont}
&W(x,\theta,u)-W(\tilde{x},\tilde{\theta},\tilde{u})\\
\leq& \sum_{k=0}^{M-1}\left[ L_{\mathrm{f}}^k\|x-\tilde{x}\|+(\|\theta-\tilde{\theta}\|+\|u-\tilde{u}\|) \sum_{j=1}^k L_{\mathrm{f}}^j\right]\nonumber\\
&+M\cdot L_{\mathrm{\gu}}(\|u-\tilde{u}\|+\|\theta-\tilde{\theta}\|)\nonumber\\
=:&c_{\mathrm{x}}\|x-\tilde{x}\|+c_{\theta}\|\theta-\tilde{\theta}\|+c_{\mathrm{u}}\|u-\tilde{u}\|.\nonumber
\end{align}
For arbitrary inputs $u_k\in\mathbb{U}$, disturbances $w_k\in\mathbb{W}$, and parameters $\theta_k\in\Theta$, System~\eqref{eq:sys} satisfies
\begin{align}
\label{eq:Lyap_converse_ISS}
&W(x_{k+1},\theta_{k+1},u_{k+1})\\
\stackrel{\eqref{eq:sys}}{=}& W(\fw(x_{k},u_{k},\theta_k,w_k),\theta_{k+1},u_{k+1})\nonumber\\
\stackrel{\eqref{eq:Lyap_converse_cont}}{\leq} &W(\fw(x_{k},u_{k},\theta_k,0),\theta_k,u_{k})\nonumber\\
&+c_\theta\|\theta_{k+1}-\theta_k\|
+c_{\mathrm{u}}\|u_{k+1}-u_{k}\|+c_{\mathrm{x}}L_{\mathrm{f}}\|w_k\|\nonumber\\
\stackrel{\eqref{eq:Lyap_converse_contraction}}{\leq} &\lambda W(x_k,\theta_k,u_k)+c_\theta\|\Delta \theta_k\|+c_{\mathrm{u}}\|u_{k+1}-u_{k}\|+c_{\mathrm{x}}L_{\mathrm{f}}\|w_k\|,\nonumber
\end{align}
where the first inequality also used Lipschitz continuity of $\fw$ (Asm.~\ref{ass:regularity}\ref{item:ass_Lipschitz_differentiable}). 
Given the compact sets $\mathbb{W},\mathbb{U},\Theta$, we define
\begin{align*}
\bar{c}:=c_\theta\max_{\theta,\tilde{\theta}\in\Theta}\|\theta-\tilde{\theta}\|+c_{\mathrm{u}}\max_{u,\tilde{u}\in\mathbb{U}} \|u-\tilde{u}\|+c_{\mathrm{x}}L_{\mathrm{f}}\max_{w\in\mathbb{W}}\|w\|.
\end{align*} 
This yields
\begin{align*}
&W(x_{k+1},\theta_{k+1},u_{k+1})\leq \lambda W(x_k,\theta_k,u_k)+\bar{c}\\
\leq& \dots\leq \lambda^{k+1} W(x_0,\theta_0,u_0)+\dfrac{1-\lambda^{k+1}}{1-\lambda}\bar{c}.
\end{align*}
Lastly, we can bound 
\begin{align}
W(x_0,\theta_0,u_0)\stackrel{\eqref{eq:Lyap_converse_bounds}}{\leq} \max_{u\in\mathbb{U},x\in\mathbb{X}_0,\theta\in\Theta} \dfrac{\Crho}{1-\rho}\|x-\gu(u,\theta)\|=:\bar{W}_0
\end{align}
using the compact set of initial conditions $x_0\in \mathbb{X}_0$ and Lipschitz continuity of $\gu$. 
Thus, the following set
\begin{align*}
\mathcal{R}=\left\{x\in\mathbb{R}^{\nx}|~W(x,\theta,u)\leq \bar{W}_0+\dfrac{\bar{c}}{1-\lambda},u\in\mathbb{U},\theta\in\Theta\right\},
\end{align*}
satisfies $x_k\in\mathcal{R}$, $k\in\mathbb{I}_{\geq 0}$. 
Furthermore, $\mathcal{R}$ is compact given~\eqref{eq:Lyap_converse_bounds} and the compact set of steady-states (Lemma~\ref{lemma:Lipschitz_steady_state}). 
Using $\hat{\Phi}_k=G(\hat{x}_k,u_k,0)$, $\hat{x}_k=x_k+v_k$ with $v_k\in\mathbb{V}$, 
a gain $\Gamma\succ 0$ satisfying the Assumption~\ref{ass:param_gain} can be computed with the following optimization problem 
\begin{align}
\label{eq:compute_gamma}
\Gamma=\arg\max~&\mathrm{trace}(\Gamma)\\
\text{s.t. }& \Phi(x+v,u) \Gamma \Phi(x+v,u)^\top\preceq I\nonumber\\
& \forall (x,u,v)\in\mathcal{R}\times\mathbb{U}\times\mathbb{V}.\nonumber
\end{align}
Note that $\Gamma\succ 0$ follows from Lipschitz continuity of $\Phi$ (Asm.~\ref{ass:steady_regular}\ref{item:ass_Lipschitz_differentiable}) and compactness of $\mathcal{R}\times\mathbb{U}\times\mathbb{V}$.}{%
\textit{Proof of Lemma~\ref{lemma:stable_bounded}:} 
Lipschitz continuity, open-loop stability, and bounded control inputs ensure that $x_k$ lies in a compact set \change{$\mathcal{R}$. The derivation of this set is analogous to~\cite[Lemmas~1--2]{nonhoff2024online}, see \cite[App.~D]{adaptive_arxiv} for a detailed proof. 
Given the set $\mathcal{R}$, the gain $\Gamma\succ0$ can be chosen according to
the following optimization problem} 
\begin{align}
\label{eq:compute_gamma}
&\Gamma=\arg\max~\mathrm{trace}(\Gamma)\\
&\text{s.t. } \Phi(x+v,u) \Gamma \Phi(x+v,u)^\top\preceq I\nonumber
\quad \forall (x,u,v)\in\mathcal{R}\times\mathbb{U}\times\mathbb{V}.\nonumber
\end{align}}
%

\textit{Proof of Thm.~\ref{thm:adaptive_global}:}
\ifbool{arxiv}
{%
We first relate the stage cost $\ell$ to the desired quantities (output tracking and constraint violation) in Objective~\ref{objective_1}.
Then, we use the robust stability properties of the MPC (Thm.~\ref{thm:robust_stability_global}) and the LMS (Thm.~\ref{thm:LMS}) in a telescopic sum. 
Lastly, we bound all remaining terms using noise, disturbances, parameter variations, and initial condition.}{} \\
\textbf{Part I: } 
\ifbool{arxiv}{Given that the state constraints $\mathbb{X}$ are given by a finite number of half space constraints~\eqref{eq:sys_constraint}, Hoffman's Lemma~\cite{hoffman2003approximate} yields a \change{constant} $c_{\mathbb{X}}>0$, such that for all $\forall x\in\mathbb{R}^{\nx}$:
\begin{align}
\label{eq:hoffman_inequality}
\|x\|_{\mathbb{X}}^2\leq c_{\mathbb{X}} \sum_{i=1}^r \max\{D_i x-d_i,0\}^2 , 
\end{align}
i.e., the soft constraint penalty provides a bound on the distance to the constraint set. 
Given Lipschitz continuity of $y=h(x)$ (Asm.~\ref{ass:regularity}\ref{item:ass_Lipschitz_differentiable}) and $y_{\mathrm{rd},\theta}$ independent of $\theta$ (Asm.~\ref{ass:steady_regular}\ref{item:ass_regularity_steady_state_1}), the output tracking error satisfies
\begin{align*}
\|y_k-y_{\mathrm{rd},\theta}\|\leq L_{\mathrm{h}}\|x_k-x_{\mathrm{rd},\hat{\theta}_k}\|
\stackrel{\eqref{eq:noise_measurement}}{\leq} L_{\mathrm{h}}\|\hat{x}_k-x_{\mathrm{rd},\hat{\theta}_k}\|+L_{\mathrm{h}}\|v_k\|.
\end{align*}
Analogous to Inequality~\eqref{eq:ell_x_rd}, the stage cost $\ell$ satisfies 
\begin{align}
\label{eq:ell_x_rd_v2}
&\ell(x,u,x_{\mathrm{s},x,\theta}^\star,u_{\mathrm{s},x,\theta}^\star)\\
\geq& \frac{\min\{\overline{a},1\}}{4}\|x-x_{\mathrm{rd},\theta}\|_Q^2+\sum_{i=1}^r q_{\xi,i}\max\{D_i x-d_i,0\}^2,\nonumber
\end{align}
for all $x\in\mathbb{R}^{\nx}$, $\theta\in\Theta$. 
Combining all three bounds yields
\begin{align}
\label{eq:proof_adaptive_ell_bound}
&\|x_k\|_{\mathbb{X}}^2+\|y_k-y_{\mathrm{rd},\theta_k}\|^2\\
\leq& 2\|\hat{x}_k\|_{\mathbb{X}}^2+2\|v_k\|^2
+ 2 L_{\mathrm{h}}^2\|\hat{x}_k-x_{\mathrm{rd},\hat{\theta}_k}\|^2+2L_{\mathrm{h}}^2\|v_k\|^2\nonumber\\
\leq & 2c_{\mathbb{X}} \sum_{i=1}^r \max\{D_i \hat{x}_k-d_i,0\}^2
+2\|v_k\|^2\nonumber\\
&+ 2 L_{\mathrm{h}}^2\|\hat{x}_k-x_{\mathrm{rd},\hat{\theta}_k}\|^2+2L_{\mathrm{h}}^2\|v_k\|^2\nonumber\\
\leq &\underbrace{\max\left\{\frac{2c_{\mathbb{X}}}{\min_i q_{\xi,i}},\dfrac{8 L_{\mathrm{h}}^2}{\sigma_{\min}(Q)\min\{1,\bar{a}\}}\right\}}_{=:c_\ell}\nonumber\\
&\left[\frac{\min\{\overline{a},1\}}{4}\|\hat{x}_k-x_{\mathrm{rd},\hat{\theta}_k}\|_Q^2+\sum_{i=1}^r q_{\xi,i}\max\{D_i \hat{x}_k-d_i,0\}^2\right]\nonumber\\
&+2(1+L_{\mathrm{h}}^2)\|v_k\|^2\nonumber\\
\leq & c_\ell\ell(\hat{x}_k,u_k,x_{\mathrm{s},\hat{x}_k,\hat{\theta}_k}^\star,u_{\mathrm{s},\hat{x}_k,\hat{\theta}_k}^\star)+2(1+L_{\mathrm{h}}^2)\|v_k\|^2,\nonumber
\end{align}
where the last inequality used~\eqref{eq:ell_x_rd_v2} with $x=\hat{x}_k$, $\theta=\hat{\theta}_k$ and the first inequality used $\|a+b\|^2\leq 2\|a\|^2+2\|b\|^2$.}
{%
First, we relate the stage cost $\ell$ to the desired quantities (output tracking and constraint violation) in Objective~\ref{objective_1} as follows:
\begin{align}
\label{eq:proof_adaptive_ell_bound}
&\|x_k\|_{\mathbb{X}}^2+\|y_k-y_{\mathrm{rd},\theta_k}\|^2\\
\leq& 2\|\hat{x}_k\|_{\mathbb{X}}^2+2\|v_k\|^2
+ 2 L_{\mathrm{h}}^2\|\hat{x}_k-x_{\mathrm{rd},\hat{\theta}_k}\|^2+2L_{\mathrm{h}}^2\|v_k\|^2\nonumber\\
\leq & 2c_{\mathbb{X}} \sum_{i=1}^r \max\{D_i \hat{x}_k-d_i,0\}^2
+2\|v_k\|^2\nonumber\\
&+ 2 L_{\mathrm{h}}^2\|\hat{x}_k-x_{\mathrm{rd},\hat{\theta}_k}\|^2+2L_{\mathrm{h}}^2\|v_k\|^2\nonumber\\
\leq &\underbrace{\max\left\{\frac{2c_{\mathbb{X}}}{\min_i q_{\xi,i}},\dfrac{8 L_{\mathrm{h}}^2}{\sigma_{\min}(Q)\min\{1,\bar{a}\}}\right\}}_{=:c_\ell}\nonumber\\
&\left[\frac{\min\{\overline{a},1\}}{4}\|\hat{x}_k-x_{\mathrm{rd},\hat{\theta}_k}\|_Q^2+\sum_{i=1}^r q_{\xi,i}\max\{D_i \hat{x}_k-d_i,0\}^2\right]\nonumber\\
&+2(1+L_{\mathrm{h}}^2)\|v_k\|^2\nonumber\\
\leq & c_\ell\ell(\hat{x}_k,u_k,x_{\mathrm{s},\hat{x}_k,\hat{\theta}_k}^\star,u_{\mathrm{s},\hat{x}_k,\hat{\theta}_k}^\star)+2(1+L_{\mathrm{h}}^2)\|v_k\|^2.\nonumber
\end{align}
The first inequality used Lipschitz continuity of $y=h(x)$ (Asm.~\ref{ass:regularity}\ref{item:ass_Lipschitz_differentiable}),  $y_{\mathrm{rd},\theta_k}$ independent of $\theta$ (Asm.~\ref{ass:steady_regular}\ref{item:ass_regularity_steady_state_1}), the noisy output measurements~\eqref{eq:noise_measurement}, and $\|a+b\|^2\leq 2\|a\|^2+2\|b\|^2$. 
The second inequality uses Hoffman's Lemma~\cite{hoffman2003approximate}, which guarantees that for a polytope $\mathbb{X}$ there exists a \change{constant} $c_{\mathbb{X}}>0$ such that
\begin{align}
\label{eq:hoffman_inequality}
\|x\|_{\mathbb{X}}^2 \le c_{\mathbb{X}} \sum_{i=1}^r \max\{D_i x - d_i, 0\}^2, \quad \forall x \in \mathbb{R}^{\nx}.
\end{align} 
The last inequality is analogous to~\eqref{eq:ell_x_rd}.
}\\
\textbf{Part II: }
Applying Inequality~\eqref{eq:LMS_step} from the LMS to the robust stability Inequality~\eqref{eq:Lyap_decrease_robust} yields
\begin{align}
\label{eq:proof_global_adaptive_decrease}
&\JN^\star(\hat{x}_{k+1},\hat{\theta}_{k+1})-\rho_{\mathrm{V}} \JN^\star(\hat{x}_k,\hat{\theta}_k)\nonumber\\
\stackrel{\eqref{eq:Lyap_decrease_robust}}{\leq} &c_{\mathrm{V}}\left(\|\tilde{w}_k+\tilde{x}_{1|k}\|^2+\|\hat{\theta}_{k+1}-\hat{\theta}_k\|^2\right)\nonumber\\
\stackrel{\eqref{eq:LMS_step}}{\leq}& c_{\mathrm{V}}(1+\sigma_{\min}(\Gamma^{-1}))\|\tilde{w}_k+\tilde{x}_{1|k}\|^2\nonumber\\
\leq& \underbrace{2 c_{\mathrm{V}}(1+\sigma_{\min}(\Gamma^{-1}))}_{\tilde{c}_{\mathrm{V}}}\left[\|\tilde{w}_k\|^2+\|\tilde{x}_{1|k}\|^2\right].
\end{align}
The combined Lyapunov function $V(\hat{x}_k,\hat{\theta}_k,\theta_k):=\JN^\star(\hat{x}_k,\hat{\theta}_k)+\tilde{c}_{\mathrm{V}} V_\theta(\hat{\theta}_k-\theta_k)$ satisfies
\begin{align}
\label{eq:proof_global_adaptive_LMS_combined}
&V(\hat{x}_{k+1},\hat{\theta}_{k+1},\theta_{k+1})-V(\hat{x}_k,\hat{\theta}_k,\theta_k)\\
\stackrel{\eqref{eq:LMS_Lyap}}\leq& \tilde{c}_{\mathrm{V}} c_\theta\sqrt{\Vtheta(\Delta \theta_k)}+2\tilde{c}_{\mathrm{V}}\|\tilde{w}_k\|^2-(1-\rho_{\mathrm{V}})\JN^\star(\hat{x}_{k},\hat{\theta}_{k}).\nonumber
\end{align}
\ifbool{arxiv}{Using $V\geq 0$ and a telescopic sum yields
\begin{align}
\label{eq:proof_global_adaptive_telescopic_1}
&\sum_{k=0}^{\Time-1}(1-\rho_{\mathrm{V}})\JN^\star(\hat{x}_k,\hat{\theta}_k)\\
\leq&\sum_{k=0}^{\Time-1} \tilde{c}_{\mathrm{V}} c_\theta\sqrt{\Vtheta(\Delta \theta_k)}+2\tilde{c}_{\mathrm{V}}\|\tilde{w}_k\|^2+ V(\hat{x}_0,\hat{\theta}_0,\theta_0).\nonumber
\end{align}
Furthermore, $\ell(\hat{x}_k,u_k,x_{\mathrm{s},\hat{x}_k,\hat{\theta}_k}^\star,u_{\mathrm{s},\hat{x}_k,\hat{\theta}_k}^\star)\leq \JN^\star(\hat{x}_k,\hat{\theta}_k)$ 
yields
\begin{align}
\label{eq:proof_global_adaptive_telescopic}
&\sum_{k=0}^{\Time-1}\|x_k\|_{\mathbb{X}}^2+\|y_k-y_{\mathrm{rd},\theta_k}\|^2\nonumber\\
\stackrel{\eqref{eq:proof_adaptive_ell_bound}}{\leq} &\sum_{k=0}^{\Time-1}c_\ell\ell(\hat{x}_k,u_k,x_{\mathrm{s},\hat{x}_k,\hat{\theta}_k}^\star,u_{\mathrm{s},\hat{x}_k,\hat{\theta}_k}^\star)+2(1+L_{\mathrm{h}}^2)\|v_k\|^2\nonumber\\
\stackrel{\eqref{eq:proof_global_adaptive_telescopic_1}}{\leq} &\frac{c_\ell}{1-\rho_{\mathrm{V}}}\left[V(\hat{x}_0,\hat{\theta}_0,\theta_0)+\sum_{k=0}^{\Time-1} \tilde{c}_{\mathrm{V}} c_\theta\sqrt{\Vtheta(\Delta \theta_k)}+2\tilde{c}_{\mathrm{V}}\|\tilde{w}_k\|^2 \right]\nonumber\\
&+\sum_{k=0}^{\Time-1}2(1+L_{\mathrm{h}}^2)\|v_k\|^2.
\end{align}}{%
Using a telescopic sum,   $\ell(\hat{x}_k,u_k,x_{\mathrm{s},\hat{x}_k,\hat{\theta}_k}^\star,u_{\mathrm{s},\hat{x}_k,\hat{\theta}_k}^\star)\leq \JN^\star(\hat{x}_k,\hat{\theta}_k)$, and  $V\geq 0$ yields
\begin{align}
\label{eq:proof_global_adaptive_telescopic}
&\sum_{k=0}^{\Time-1}\|x_k\|_{\mathbb{X}}^2+\|y_k-y_{\mathrm{rd},\theta_k}\|^2\nonumber\\
{\leq} &\frac{c_\ell}{1-\rho_{\mathrm{V}}}\left[V(\hat{x}_0,\hat{\theta}_0,\theta_0)+\sum_{k=0}^{\Time-1} \tilde{c}_{\mathrm{V}} c_\theta\sqrt{\Vtheta(\Delta \theta_k)}+2\tilde{c}_{\mathrm{V}}\|\tilde{w}_k\|^2 \right]\nonumber\\
&+\sum_{k=0}^{\Time-1}2(1+L_{\mathrm{h}}^2)\|v_k\|^2.
\end{align}
}
\textbf{Part III: }
\ifbool{arxiv}{%
To ensure~\eqref{eq:desired_stability}, all terms on the right hand side in~\eqref{eq:proof_global_adaptive_telescopic} need to be bounded proportional to $\|w_k\|^2+\|v_k\|^2+\|\Delta\theta_k\|$. 
The prediction error due to noise $\tilde{w}$ satisfies 
\begin{align*}
\|\tilde{w}_k\|^2
\stackrel{\eqref{eq:LMS_w_bound}}{\leq} \left(\|v_{k+1}\|+L_{\mathrm{f}}(\|w_k\|+\|v_k\|)\right)^2\\
\leq 3\max\{1,L_{\mathrm{f}}^2\}\left[\|v_{k+1}\|^2+\|v_k\|^2+\|w_k\|^2\right].
\end{align*}
The parameter variation satisfies
\begin{align*}
\sqrt{\Vtheta(\Delta\theta_k)}=\sqrt{\Delta \theta_k^\top\Gamma^{-1}\Delta\theta_k}\leq \sqrt{\sigma_{\min}(\Gamma^{-1})}\|\Delta \theta_k\|.
\end{align*}
From Assumption~\ref{ass:steady_regular}\ref{item:ass_unique} and $y_{\mathrm{rd},\theta}=y_{\mathrm{rd},\hat{\theta}_0}$ (Asm.~\ref{ass:steady_regular}\ref{item:ass_regularity_steady_state_1}), we have
\begin{align}
\label{eq:proof_global_adaptive_Lipschitz_init}
\|x_{\mathrm{rd},\hat{\theta}_0}-x_{\mathrm{rd},\theta_0}\|_Q^2 \leq L_{\mathrm{\gy}} \|\theta_0-\hat{\theta}_0\|^2.
\end{align}
Thus, the Lyapunov function $V$ at time $k=0$ can be bounded using Proposition~\ref{prop:cost_controllable}:
\begin{align*}
&V(\hat{x}_0,\hat{\theta}_0,\theta_0)=\JN^\star(\hat{x}_0,\hat{\theta}_0)+\tilde{c}_{\mathrm{V}} V_\theta(\hat{\theta}_0-\theta_0)\\
\stackrel{\eqref{eq:value_upper_bound}}{\leq}&\gamma_N\|\hat{x}_0-x_{\mathrm{rd},\hat{\theta}_0}\|_Q^2+ \tilde{c}_{\mathrm{V}}V_\theta(\hat{\theta}_0-\theta_0)\\
\leq &2\gamma_N(\|\hat{x}_0-x_{\mathrm{rd},\theta_0}\|_Q^2+\|x_{\mathrm{rd},\theta_0}-x_{\mathrm{rd},\hat{\theta}_0}\|_Q^2)+ \tilde{c}_{\mathrm{V}} V_\theta(\hat{\theta}_0-\theta_0)\\
\stackrel{\eqref{eq:proof_global_adaptive_Lipschitz_init}}{\leq} &( \tilde{c}_{\mathrm{V}} \sigma_{\min}(\Gamma^{-1})+2\gamma_N L_{\mathrm{\gy}})\|\hat{\theta}_0-\theta_0\|^2\\
&+4\gamma_N\left(\|x_0-x_{\mathrm{rd},\theta_0}\|^2+\|v_0\|^2\right).
\end{align*}}
{The initial condition further satisfies
\begin{align*}
&V(\hat{x}_0,\hat{\theta}_0,\theta_0)=\JN^\star(\hat{x}_0,\hat{\theta}_0)+\tilde{c}_{\mathrm{V}} V_\theta(\hat{\theta}_0-\theta_0)\\
{\leq} &( \tilde{c}_{\mathrm{V}} \sigma_{\min}(\Gamma^{-1})+2\gamma_N L_{\mathrm{\gy}})\|\hat{\theta}_0-\theta_0\|^2\\
&+4\gamma_N\left(\|x_0-x_{\mathrm{rd},\theta_0}\|^2+\|v_0\|^2\right),
\end{align*}
using Proposition~\ref{prop:cost_controllable}, Assumption~\ref{ass:steady_regular}\ref{item:ass_unique} and $y_{\mathrm{rd},\theta_0}=y_{\mathrm{rd},\hat{\theta}_0}$.
Furthermore, recall that $\|\tilde{w}_k\|$ can be bounded proportional to the process and measurement noise using \eqref{eq:LMS_w_bound}. 
Thus, all terms in~\eqref{eq:proof_global_adaptive_telescopic} can be bounded by the expression in Objective~\ref{objective_1}, 
see~\cite[App.~D]{adaptive_arxiv} for the exact constants $C_1,C_2>0$. 
}
\ifbool{arxiv}{\clearpage}{}

\section{Proof of Section~\ref{sec:regional}}
\label{app:regional}
 \textit{Proof of Theorem~\ref{thm:stable_artificial_feedback}:}
Propositions~\ref{prop:cost_controllable} and~\ref{prop:finite_tail} directly generalize under Assumption~\ref{ass:stable_feedback} and $\Vfkappa$, however, the results only hold in the local neighbourhood $\mathbb{S}_{\mathrm{loc}}$.
Compared to Theorem~\ref{thm:stable_artificial}, further steps are required to derive a region of attraction $\mathbb{X}_{\bar{\mathcal{J}}}$ which is not restricted to the (potentially small) neighbourhood $\mathbb{S}_{\mathrm{loc}}$. 
By considering a fixed artificial setpoint $x_{\mathrm{s}}$ as a candidate solution, Inequality~\eqref{eq:RDP_feedback} reduces to the stability analysis in~\cite[Thm.~4.37]{kohler2021dynamic}. 
Specifically, a case distinction ensures that Inequalities~\eqref{eq:cost_control} and \eqref{eq:approx_CLF} remain valid for the tail of the prediction horizon $k\in\mathbb{I}_{\geq N_0}$, with $N_0:=\max\left\{0,\dfrac{\bar{\mathcal{J}}-\bar{\gamma} c_{\mathrm{loc}}}{c_{\mathrm{loc}}}\right\}$ and $\bar{\gamma}=\max_k \gamma_k$. 
Thus, a simple sufficient condition for Inequality~\eqref{eq:RDP_feedback}  with $\alpha>0$ and, e.g., $\omega=1$, is given by 
\begin{align}
\label{eq:local_horizon}
N\geq N_0+\dfrac{\log(\bar{\gamma})+\log(\epsilon_{\mathrm{f}})}{\log(\bar{\gamma})-\log(\bar{\gamma}-1)}\end{align}
with $\epsilon_{\mathrm{f}}$ according to~\eqref{eq:epsilon_f}, see~\cite[Equ.~(4.53)]{kohler2021dynamic}. 

\textit{Proof of Corollary~\ref{cor:stable_artificial_feedback}:} 
Given Theorem~\ref{thm:stable_artificial_feedback}, we adapt the proof of Corollary~\ref{corol:nominal}, which leverages Proposition~\ref{prop:artificial_bound}. 
The proof of Proposition~\ref{prop:artificial_bound} leverages the upper bound $\bar{\gamma}$ from Proposition~\ref{prop:cost_controllable}, which only holds on the local set $\mathbb{S}_{\mathrm{loc}}$ under Assumption~\ref{ass:stable_feedback}. 
However, a simple case distinction shows that the larger bound $\bar{\gamma}_{\bar{\mathcal{J}}}:=\max\{\bar{\gamma},\frac{\bar{\mathcal{J}}}{c_{\mathrm{loc}}}\}$ is valid in the region of attraction $\mathbb{X}_{\bar{\mathcal{J}}}$, see, e.g., \cite{soloperto2021nonlinear}. 
Thus, Proposition~\ref{prop:artificial_bound} remains valid for all $(x,\theta)\in\mathbb{X}_{\bar{\mathcal{J}}}$ with a possibly smaller constant $\bar{a}>0$.
We define $V_\kappa(x,\theta):=\JNkappa^\star(x,\theta)-\|y_{\mathrm{rd},\theta}-y_{\mathrm{d}}\|_T^2$, which is a monotonically decreasing function. Thus, the sublevel set $\mathbb{X}_{\bar{\mathcal{J}}}$ also remains positively invariant. Exponential stability follows with the Lyapunov function $V_\kappa$, analogous to Corollary~\ref{corol:nominal}.

\textit{Proof of Theorem~\ref{thm:robust_stability_regional_feedback}:} 
\ifbool{arxiv}{Lemmas~\ref{lemma:prediction_lipschitz} and \ref{lemma:continuity_steady_state_theta} are completely independent of $\kappa$ and thus remain valid. 
The continuity results in Lemmas~\ref{lemma:continuity_openloop_cost} and \ref{lemma:continuity_opt_cost} hold analogously for the cost $\JNkappa$ and $\JNkappa^\star$ due to the assumed Lipschitz continuity of $\kappa$ (Asm.~\ref{ass:stable_feedback}).}{%
The continuity result in Lemma~\ref{lemma:continuity_opt_cost} holds analogously for the cost $\JNkappa^\star$ due to the assumed Lipschitz continuity of $\kappa$ (Asm.~\ref{ass:stable_feedback}).} 
Thus, the derivation of Inequality~\eqref{eq:Lyap_decrease_robust_feedback} is analogous to Theorem~\ref{thm:robust_stability_global}, taking into account the \emph{regional} nominal stability properties from Theorem~\ref{thm:stable_artificial_feedback}. 
The main difference is that the analysis requires that $\JNkappa^\star(\hat{x}_k,\hat{\theta}_k)\leq \bar{\mathcal{J}}$ remains valid. 
This invariance follows from~\eqref{eq:Lyap_decrease_robust_feedback} with the assumed bound~\eqref{eq:robust_feedback_bound_mismatch}.

\textit{Proof of Lemma~\ref{lemma:bounded_feedback}:}
We know that $ x_k\in\mathcal{R}$, with 
\begin{align}
\label{eq:compute_Gamma_invariantSet_feedback}
\mathcal{R}=\{x|&~\exists \theta\in\Theta, v\in\mathbb{V}, (x_{\mathrm{s}},u_{\mathrm{s}},y_{\mathrm{s}})\in\bar{\mathbb{S}}(\theta):\\
&\|x+v-x_{\mathrm{s}}\|_Q^2\leq \bar{\mathcal{J}}\}.\nonumber
\end{align}
Compactness of the set of steady-states (Asm.~\ref{ass:regularity}\ref{item:ass_steady_state}) and $\Theta,\mathbb{V}$ compact imply that $\mathcal{R}$ in~\eqref{eq:compute_Gamma_invariantSet_feedback} is compact. Thus, Assumption~\ref{ass:param_gain} can be satisfied by choosing $\Gamma$ \ifbool{arxiv}{as in Lemma~\ref{lemma:stable_bounded} with~\eqref{eq:compute_gamma} using the compact set $\mathcal{R}$ in~\eqref{eq:compute_Gamma_invariantSet_feedback}}{sufficiently small, similar to Lemma~\ref{lemma:stable_bounded}}.

\textit{Proof of Theorem~\ref{thm:adaptive_feedback}:}
Inequalities~\eqref{eq:LMS_step}
--\eqref{eq:LMS_w_bound} and Assumption~\ref{ass:regularity}\ref{item:ass_Lipschitz_differentiable} ensure 
\begin{align*}
&\|\tilde{w}_k+\tilde{x}_{1|k}\|\\
\leq& \max\{L_{\mathrm{f}},1\}(\|w_k\|+\|v_k\|+\|v_{k+1}\|+\|\theta_k-\hat{\theta}_k\|),\\
&\|\hat{\theta}_{k+1}-\hat{\theta}_k\|^2\leq \sigma_{\min}(\Gamma^{-1})\|\tilde{w}_k+\tilde{x}_{1|k}\|^2. 
\end{align*}
Thus, $\theta_k,\hat{\theta}_k\in\Theta$, $w_k\in\mathbb{W}$, $v_k\in\mathbb{V}$ with $\Theta,\mathbb{V},\mathbb{W}$ sufficiently small ensure that Inequality~\eqref{eq:robust_feedback_bound_mismatch} holds, which ensures $(\hat{x}_k,\hat{\theta}_k)\in\mathbb{X}_{\bar{\mathcal{J}}}$, $\forall k\in\mathbb{I}_{\geq 0}$ with Theorem~\ref{thm:robust_stability_regional_feedback}.
Thus, we can also apply the results from Lemma~\ref{lemma:bounded_feedback} and Theorem~\ref{thm:LMS}. 
The remainder of the proof is analogous to Theorem~\ref{thm:adaptive_global}. 

 \textit{Proof of Corollary~\ref{cor:adaptive_regional}:}
For $\mathbb{W}=\{0\}$, $\mathbb{V}=\{0\}$, we have $\tilde{w}_k=0$. 
Analogous to Inequality~\eqref{eq:proof_global_adaptive_LMS_combined}, we obtain 
\begin{align*}
&V_\kappa(\hat{x}_{k+1},\hat{\theta}_{k+1},\theta_{k+1})-V_\kappa(\hat{x}_k,\hat{\theta}_k,\theta_k)\\
\leq&-(1-\rho_{\mathrm{V}})\JN^\star(\hat{x}_{k},\hat{\theta}_{k}).
\end{align*}
Thus, $\JNkappa^\star(\hat{x}_k,\hat{\theta}_k)\leq V_\kappa(\hat{x}_k,\hat{\theta}_k,\theta_k)\leq \bar{\mathcal{J}}$ holds recursively. 
The remainder of the proof is analogous to Theorem~\ref{thm:adaptive_global}. 
Inequality~\eqref{eq:desired_stability} with $w_k=0$, $v_k=0$, $\Delta \theta_k=0$, and the bounded initial condition directly yields~\eqref{eq:bounded_error_regional}. 
\ifbool{arxiv}{\clearpage}{}
\section{\change{Boundedness} with local Lipschitz continuity under small parameter variations}
\label{app:bounded_TV}
 In the following, we relax the \change{Lipschitz bound} (Asm.~\ref{ass:regularity}\ref{item:ass_Lipschitz_differentiable}) to a local Lipschitz condition.
\begin{assumption}
\label{ass:Lipschitz_local}
There exist \change{constants}  $L_{\mathrm{f},\theta},L_{\mathrm{f}}\geq 0$, such that for all $x,\tilde{x}\in\mathbb{R}^{\nx}$, $\theta,\tilde{\theta}\in\Theta$, $u,\tilde{u}\in\mathbb{U}$, $w,\tilde{w}\in\mathbb{W}$: 
\begin{align}
\label{eq:local_Lipschitz_fw}
&\|\fw(x,u,\theta,w)-\fw(\tilde{x},\tilde{u},\tilde{\theta},\tilde{w})\|
\leq L_{\mathrm{f},\theta}\|\theta-\tilde{\theta}\| (1+\|x\|)\nonumber\\
&+L_{\mathrm{f}}\left[\|x-\tilde{x}\|+\|u-\tilde{u}\|+\|w-\tilde{w}\|\right]. 
\end{align} 
\end{assumption}
Compared to Assumption~\ref{ass:regularity}\ref{item:ass_Lipschitz_differentiable}, the Lipschitz constant w.r.t. $\theta$ can increase with $\|x\|$, which is important as we do not assume a \change{bound} on $\|x\|$. 
The following proposition shows that Assumption~\ref{ass:Lipschitz_local}  holds naturally for general linear systems. 
\begin{proposition}
\label{prop:local_Lipschitz_linear}
Let Assumption~\ref{ass:linear_param} hold and suppose we have a linear dynamical system (cf. Sec.~\ref{sec:discussion_linear}). 
For any compact sets $\Theta,\mathbb{U},\mathbb{W}$, there exist \change{constants} $L_{\mathrm{f},\theta},L_{\mathrm{f}}\geq 0$ satisfying~\eqref{eq:local_Lipschitz_fw}.
\end{proposition}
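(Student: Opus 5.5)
The plan is to exploit the affine structure of the dynamics in $(x,u,w)$ for fixed $\theta$, together with the affine dependence of all matrices on $\theta$ (Assumption~\ref{ass:linear_param} and Section~\ref{sec:discussion_linear}). With $f(x,u,\theta,w)=A_\theta x+B_\theta u+Ew+e_\theta$, we first insert the intermediate point $f(\tilde{x},\tilde{u},\theta,\tilde{w})$ and apply the triangle inequality:
\begin{align*}
&\|f(x,u,\theta,w)-f(\tilde{x},\tilde{u},\tilde{\theta},\tilde{w})\|\\
\leq& \|f(x,u,\theta,w)-f(\tilde{x},\tilde{u},\theta,\tilde{w})\|+\|f(\tilde{x},\tilde{u},\theta,\tilde{w})-f(\tilde{x},\tilde{u},\tilde{\theta},\tilde{w})\|.
\end{align*}
The first term equals $\|A_\theta(x-\tilde{x})+B_\theta(u-\tilde{u})+E(w-\tilde{w})\|$. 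Since $\theta\mapsto(A_\theta,B_\theta)$ is affine, hence continuous, and $\Theta$ is compact, the quantities $\max_{\theta\in\Theta}\|A_\theta\|$, $\max_{\theta\in\Theta}\|B_\theta\|$ and $\|E\|$ are finite. Taking $L_{\mathrm{f}}$ as the maximum of these three constants gives the bracketed term in~\eqref{eq:local_Lipschitz_fw}.

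For the second term, write $A_\theta=A_0+\sum_i\theta_iA_i$, and similarly for $B_\theta$ and $e_\theta$. This gives
\begin{align*}
&\|(A_\theta-A_{\tilde{\theta}})\tilde{x}+(B_\theta-B_{\tilde{\theta}})\tilde{u}+e_\theta-e_{\tilde{\theta}}\|\\
\leq& c_A\|\theta-\tilde{\theta}\|\|\tilde{x}\|+(c_B\max_{u\in\mathbb{U}}\|u\|+c_e)\|\theta-\tilde{\theta}\|,
\end{align*}
where $c_A,c_B,c_e$ are bounds on the linear maps $\theta\mapsto A_\theta-A_0$, and so on, and $\mathbb{U}$ is compact. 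The one subtlety is that~\eqref{eq:local_Lipschitz_fw} uses $(1+\|x\|)$ and not $\|\tilde{x}\|$.

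To handle this, I would choose the splitting point the other way round: insert $f(x,u,\tilde{\theta},w)$, so the $\theta$-difference is evaluated at $(x,u)$. The remaining term is then $f(x,u,\tilde{\theta},w)-f(\tilde{x},\tilde{u},\tilde{\theta},\tilde{w})$, which is bounded by $L_{\mathrm{f}}$ exactly as before, with $\tilde{\theta}$ in place of $\theta$. With $L_{\mathrm{f},\theta}:=\max\{c_A,\,c_B\max_{u\in\mathbb{U}}\|u\|+c_e\}$, summing the two pieces gives~\eqref{eq:local_Lipschitz_fw}. The disturbance enters only through the constant $E$ and contributes nothing to the $\theta$-term. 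Compactness of $\mathbb{W}$ is therefore not even needed, though it is harmless.

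I expect no real obstacle here. The only care points are choosing the correct intermediate point so that the state factor is $\|x\|$, and noting that all constants are uniform because they come from maxima of continuous functions over the compact sets $\Theta$ and $\mathbb{U}$.
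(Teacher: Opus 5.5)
Your final argument is correct and is essentially the paper's proof. The paper also inserts the intermediate point $f(x,u,\tilde{\theta},w)$. It bounds the $\theta$-difference term, written as $G(x,u,w)(\theta-\tilde{\theta})$, by a Lipschitz constant of the affine maps times $(\|x\|+c_2)$, and bounds the remaining term using the maxima of $\|A_{\tilde{\theta}}\|$ and $\|B_{\tilde{\theta}}\|$ over $\Theta$. The only difference is that the paper's proof lets the disturbance matrix depend on $\theta$, which is why it uses compactness of $\mathbb{W}$; with the constant $E$ of~\eqref{eq:linear_system}, you are right that this is not needed.
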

\ifbool{arxiv}{%
\begin{pf} 
Inequality~\eqref{eq:local_Lipschitz_fw} follows from the compact sets $\mathbb{U},\mathbb{W},\Theta$:
\begin{align*}
&\|\fw(x,u,\theta,w)-\fw(\tilde{x},\tilde{u},\tilde{\theta},\tilde{w})\|\\
\leq &\|G(x,u,w)(\theta-\tilde{\theta})\|\\
&+\|A_{\tilde{\theta}}(x-\tilde{x})+B_{\tilde{\theta}}(u-\tilde{u})+G_{\tilde{\theta}}(w-\tilde{w})\|\\ 
\leq & c_1 (\|x\|+c_2)\|\theta-\tilde{\theta}\|+c_3\left[\|x-\tilde{x}\|+\|u-\tilde{u}\|+\|w-\tilde{w}\|\right],
\end{align*}
where $c_3=\max_{\theta\in\Theta}\max\{\|A_\theta\|,\|B_\theta\|,\|G_\theta\|,\|e_\theta\|\}$, $c_2=\max_{u\in\mathbb{U},w\in\mathbb{W}}\|u\|+\|w\|+1$, and $c_1$ is a Lipschitz constant of $A_\theta,B_\theta,G_\theta,e_\theta$. 
\end{pf}}{}
The following result generalizes Lemma~\ref{lemma:stable_bounded}. 
\begin{lemma}
\label{lemma:stable_bounded_local}
Let Assumptions~\ref{ass:regularity}\ref{item:ass_steady_state}, \ref{ass:stable}, and \ref{ass:Lipschitz_local} hold. 
\change{Consider} known compact sets $\mathbb{X}_0$, $\mathbb{W}$, $\mathbb{V}$, $\Theta$ satisfying $w_k\in\mathbb{W}$, $u_k\in\mathbb{U}$, $v_k\in\mathbb{V}$, $\theta_k\in\Theta$, $k\in\mathbb{I}_{\geq 0}$, and $x_0\in\mathbb{X}_0$.
Furthermore, suppose there exists a sufficiently small set $\Omega\subseteq \mathbb{R}^{\ntheta}$ with $0\in\mathrm{int}(\Omega)$, such that the parameter variation satisfies $\Delta \theta_k\in\Omega$, $\forall k\in\mathbb{I}_{\geq 0}$.
\change{Then, there exists a sufficiently small gain $\Gamma\succ 0$ satisfying Assumption~\ref{ass:param_gain}.}
\end{lemma}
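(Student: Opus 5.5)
The plan is to mirror the proof of Lemma~\ref{lemma:stable_bounded}. First we build a compact set $\mathcal{R}$ with $x_k\in\mathcal{R}$ for all $k\in\mathbb{I}_{\geq 0}$. Then we choose $\Gamma$ via~\eqref{eq:compute_gamma} on the compact set $\mathcal{R}\times\mathbb{U}\times\mathbb{V}$. The only change is that continuity with respect to $\theta$ now carries a factor $(1+\|x\|)$, so the term coming from the parameter variation is no longer uniformly bounded. It must instead be absorbed by the contraction, which is where smallness of $\Omega$ enters.

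First, I would re-establish Lemma~\ref{lemma:Lipschitz_steady_state} under Assumption~\ref{ass:Lipschitz_local}. The implicit-function argument only needs differentiability and $\|(I-f_x)^{-1}\|\leq\sigma$. Since $\Theta,\mathbb{U}$ are compact, the steady states $\gu(u,\theta)$ stay bounded, say $\|\gu(u,\theta)\|\leq \bar{x}_{\mathrm{s}}$. On this bounded set the local condition~\eqref{eq:local_Lipschitz_fw} yields a uniform Lipschitz constant in $(u,\theta)$, so $\gu$ is Lipschitz and its range is compact.

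Next, take the same finite-tail function $W$ from~\eqref{eq:Lyap_converse}, with $M$ chosen so that $\Crho\rho^M<1$. The bounds~\eqref{eq:Lyap_converse_bounds}--\eqref{eq:Lyap_converse_contraction} depend only on Assumption~\ref{ass:stable}, so they hold unchanged. In the continuity estimate~\eqref{eq:Lyap_converse_cont}, I would iterate~\eqref{eq:local_Lipschitz_fw} along the $M$-step rollouts. The predicted states obey $\|x_{\mathbf{u}^k}(k,x,\theta)\|\leq \|\gu\|+\Crho\|x-\gu\|\leq c(1+\|x\|)$ by stability. This gives
\begin{align*}
W(x,\theta,u)-W(\tilde x,\tilde\theta,\tilde u)\leq c_{\mathrm{x}}\|x-\tilde x\|+c_\theta(1+\|x\|)\|\theta-\tilde\theta\|+c_{\mathrm{u}}\|u-\tilde u\|.
\end{align*}
Since $\|x\|\leq \bar{x}_{\mathrm{s}}+W(x,\theta,u)$, we have $1+\|x\|\leq c'(1+W)$. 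Repeating~\eqref{eq:Lyap_converse_ISS} then yields
\begin{align*}
W_{k+1}\leq \big(\lambda+c_\theta c'\|\Delta\theta_k\|\big)W_k+c_\theta c'\|\Delta\theta_k\|+\bar c,
\end{align*}
where $W_k:=W(x_k,\theta_k,u_k)$ and $\bar c$ collects the bounded $u$- and $w$-terms. The $(1+\|x\|)$ factor is evaluated at $f(x_k,u_k,\theta_k,w_k)$, whose norm is bounded affinely in $\|x_k\|$ plus a constant, so it is still controlled by $W_k$.

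The main obstacle, and the point where $\Omega$ enters, is this multiplicative term. I would choose $\Omega$ small enough that $\lambda':=\lambda+c_\theta c'\max_{\delta\in\Omega}\|\delta\|<1$. This is possible because $0\in\mathrm{int}(\Omega)$ lets us shrink $\Omega$ to a small ball. Then $W_k\leq \bar W_0+\frac{\bar c'}{1-\lambda'}$, with $\bar W_0$ obtained from compactness of $\mathbb{X}_0$ as before. This defines a compact $\mathcal{R}$ exactly as in the proof of Lemma~\ref{lemma:stable_bounded}. Finally, $G$ is continuous and therefore bounded on the compact set $\{x+v:x\in\mathcal{R},v\in\mathbb{V}\}\times\mathbb{U}$, so the regressor $\hat{\Phi}_k$ is uniformly bounded. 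Any sufficiently small $\Gamma\succ0$, e.g.\ the solution of~\eqref{eq:compute_gamma}, then satisfies Assumption~\ref{ass:param_gain}.
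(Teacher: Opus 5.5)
Your proposal is correct and follows essentially the same route as the paper. You reuse the finite-tail function $W$ from Lemma~\ref{lemma:stable_bounded}, derive a continuity bound whose $\theta$-term carries a factor $(1+W)$ via the bound on the rollouts' norms, absorb the resulting multiplicative term $(\lambda+c\|\Delta\theta_k\|)W_k$ by taking $\Omega$ to be a small ball so the rate stays below one, and then choose $\Gamma$ via~\eqref{eq:compute_gamma} on the compact set $\mathcal{R}$. Your explicit re-derivation of the Lipschitz continuity of $\gu$ under the local condition is a small extra detail that the paper takes for granted.
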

\ifbool{arxiv}{%
\begin{pf}
The proof follows the arguments from Lemma~\ref{lemma:stable_bounded}. 
Let us denote $\bar{x}_k=x_{\mathbf{u}^k}(k,x,\theta)$ and $\tilde{x}_k=x_{\tilde{\mathbf{u}}^k}(k,\tilde{x},\tilde{\theta})$, $k\in\mathbb{I}_{[0,M-1]}$. 
Compact sets $\mathbb{U},\Theta$ and $\gu$ Lipschitz continuous imply
\begin{align}
\label{eq:proof_local_Lipschitz_1} 
\|\bar{x}_k\|\leq \|\bar{x}_k-\gu(u,\theta)\|+\|\gu(u,\theta)\|\stackrel{\eqref{eq:Lyap_converse_bounds}}{\leq} W(x,\theta,u)+c_{\gu},
\end{align}
with a \change{constant} $c_{\gu}\geq 0$.
With some abuse of notation, we abbreviate
\begin{align}
\label{eq:proof_local_Lipschitz_2}
L_{W}:=L_{\mathrm{f},\theta}(1+c_{\gu}+W(x,\theta,u)).
\end{align} 
The prediction error satisfies
\begin{align*}
&\|\bar{x}_{k+1}-\tilde{x}_{k+1}\|\\
\stackrel{\eqref{eq:local_Lipschitz_fw}}{\leq}& L_{\mathrm{f}}\left[\|\bar{x}_k-\tilde{x}_k\|+\|u-\tilde{u}\|\right]
+L_{\mathrm{f},\theta}\|\theta-\tilde{\theta}\|(1+\|\bar{x}_k\|)\\ 
\stackrel{\eqref{eq:proof_local_Lipschitz_1},\eqref{eq:proof_local_Lipschitz_2}}{\leq} & L_{\mathrm{f}}\left[\|\bar{x}_k-\tilde{x}_k\|+\|u-\tilde{u}\|\right]+L_W\|\theta-\tilde{\theta}\|.
\end{align*}
Recursive application yields
\begin{align*}
&\|\bar{x}_k-\tilde{x}_k\|\\
\leq &L_{\mathrm{f}}^k\|x-\tilde{x}\|+\left[L_W\|\theta-\tilde{\theta}\|+L_{\mathrm{f}}\|u-\tilde{u}\|\right]\sum_{j=1}^{k-1}L_{\mathrm{f}}^{j-1}.
\end{align*}
Analogous to Inequality~\eqref{eq:Lyap_converse_cont}, we arrive at 
\begin{align}
\label{eq:Lyap_converse_cont_local}
&W(x,\theta,u)-W(\tilde{x},\tilde{\theta},\tilde{u})\\ 
\leq&c_{\mathrm{x}}\|x-\tilde{x}\|+c_{\mathrm{u}}\|u-\tilde{u}\|+c_{\theta}\|\theta-\tilde{\theta}\|(1+W(x,\theta,u)),\nonumber
\end{align}
with \change{constants} $c_{\mathrm{x}}$, $c_{\mathrm{u}}$, $c_\theta\geq 0$, where the factor $1+W$ is due to \eqref{eq:proof_local_Lipschitz_2}. 
Analogous to~\eqref{eq:Lyap_converse_ISS}, this yields 
\begin{align}
\label{eq:Lyap_converse_ISS_local}
&W(x_{k+1},\theta_{k+1},u_{k+1})\\ 
\stackrel{\eqref{eq:Lyap_converse_cont_local},\eqref{eq:Lyap_converse_contraction}}{\leq} &(\lambda+c_\theta\|\Delta \theta_k\|) W(x_k,\theta_k,u_k)\nonumber\\
&+c_\theta\|\Delta \theta_k\|+c_{\mathrm{u}}\|u_{k+1}-u_{k}\|+c_{\mathrm{x}}L_{\mathrm{f}}\|w_k\|.\nonumber
\end{align}
Given the compact sets $\mathbb{W},\mathbb{U},\Omega$, we define
\begin{align*}
\bar{c}:=c_\theta\max_{\Delta\theta\in\Omega}\|\Delta\theta\|+c_{\mathrm{u}}\max_{u,\tilde{u}\in\mathbb{U}} \|u-\tilde{u}\|+c_{\mathrm{x}}L_{\mathrm{f}}\max_{w\in\mathbb{W}}\|w\|.
\end{align*} 
Given any $\tilde{\lambda}\in(\lambda,1)$, we have $\lambda+c_\theta\|\Delta \theta_k\|\leq \tilde{\lambda}<1$ with $\Omega=\left\{\Delta \theta\in\mathbb{R}^{\ntheta}|~\|\Delta\theta\|\leq \dfrac{\tilde{\lambda}-\lambda}{c_\theta}\right\}$, $0\in\mathrm{int}(\Omega)$. 
Thus, we have
\begin{align*}
&W(x_{k+1},\theta_{k+1},u_{k+1}) \leq \tilde{\lambda}W(x_k,\theta_k,u_k)+\bar{c}.
\end{align*}
The rest of the proof is analogous to Lemma~\ref{lemma:stable_bounded}. 
\end{pf}}{%
The proof is analogous to Lemma~\ref{lemma:stable_bounded}, see \cite[App.~F]{adaptive_arxiv} for details.}
This result also ensures \change{boundedness} of $x_k$. 
Thus, Inequality~\eqref{eq:local_Lipschitz_fw} also yields a \change{Lipschitz bound} for the closed-loop system and hence the results from Section~\ref{sec:global} can be adapted to this setting. 
The main difference compared to Section~\ref{sec:global} is that the theoretical results hold only for sufficiently small variations in the unknown model parameters $\Delta \theta_k$. 
\ifbool{arxiv}{\clearpage}{}
\ifbool{arxiv}{
\change{\section{Effect of design parameters on stability \& robustness}
\label{app:constants} 
In the following, we discuss how the design constants $N,M,\omega$ affect the stability and robustness.
\subsection{Open-loop stable systems (Sec.~\ref{sec:global})}
Nominal stability requires $\alpha>0$ in Theorem~\ref{thm:adaptive_global}.
From a computational point of view, it is desirable to achieve this with a small horizon $N,M$. The smallest horizon can be obtained by having a small prediction horizon $N$, the rollout horizon $M$ large enough such that $C_\ell\rho^M<1$ and the weight $\omega$ sufficiently large.
Quantitative robust stability properties in Theorem~\ref{thm:robust_stability_global} are captured by the contraction rate
$\rho_{\mathrm{V}}=1-\alpha\underline{c}_1/(2 \gamma_N)$ and the gain $c_{\mathrm{V}}=\dfrac{1+\epsilon}{\epsilon}c_{\mathrm{J}^\star}$, $\epsilon=(1-\rho_{\mathrm{V}})$.
The factors $\rho_{\mathrm{V}}$ and $\epsilon$
improve as $\alpha\rightarrow 1$, which is the case for either $N$ or $M$ large. In addition, a weight $\omega$ close to one gives the smallest bound $\gamma_N$.
Hence, nominal stability properties are best if a large horizon $N,M$ is chosen and the weight $\omega$ is close to one, as standard in MPC~\cite{grune2017nonlinear}.
The gain $c_{\mathrm{V}}$ depends additionally on the continuity constant $c_{\mathrm{J}^\star}$ from Lemma~\ref{lemma:continuity_opt_cost}, which depends on the horizon-dependent constants $c_{\mathrm{x},k}$ from Lemma~\ref{lemma:prediction_lipschitz} that are based on Lipschitz continuity. 
As discussed below Lemma~\ref{lemma:prediction_lipschitz}; 
owing to the fact that exponential stability on the compact domain implies incremental exponential stability, horizon independent uniform bounds $c_{\mathrm{x},k}$ also exist~\cite[Thm.~8]{karapetyan2025closed}.  
Thus, $c_{\mathrm{J}^\star}$ admits a bound independent of the horizons $N,M$, but that increases with $\omega$. 
This implies that we can obtain a bound on the term $c_{\mathrm{J}^\star}/(1-\rho_{\mathrm{V}})$ in Theorem~\ref{thm:robust_stability_global}, which decreases by increasing the horizon $N,M$ and choosing a weight $\omega$ close to one.   
For a given computational budget, increases in the rollout horizon $M$ have a stronger benefit on robustness and in general yield a smaller computational cost compared to increases in the prediction horizon $N$.
\subsection{Unstable systems (Sec.~\ref{sec:regional})}
Compared to the case of open-loop stable systems, the analysis for unstable systems requires a bound on the uncertainty~\eqref{eq:robust_feedback_bound_mismatch} and is valid on a given region of attraction characterized by $\bar{\mathcal{J}}$. 
Appendix~\ref{app:regional} shows that for a given $N,M,\omega$ satisfying $\alpha>0$, the horizon $N$ needs to be increased linearly with increases in $\bar{\mathcal{J}}$ to ensure that the constants $\alpha$ and $\rho_{\mathrm{V}}$ remain unchanged. 
The effect on the constant $c_{\mathrm{V}}$ is less straightforward. 
Given the Lipschitz bound (Assumption~\ref{ass:regularity}), the bound $c_{\mathrm{V}}$ is independent of $\bar{\mathcal{J}}$ but scales exponentially with the horizon $N$. 
Appendix~\ref{app:proof_global_robust} highlights that this can be circumvented using horizon-independent bounds owing to the incremental stability that can be established for open-loop exponentially stable systems on a compact set~\cite[Thm.~8]{karapetyan2025closed}. 
For the analysis in Theorem~\ref{thm:robust_stability_regional_feedback}, we can similarly utilise the local stability under the feedback $\kappa$ in the rollout horizon $M$ to obtain a constant $c_{\mathrm{V}}$ that is independent of $M$, but may still increase with $N$. 
Thus, we can improve the robustness properties by increasing the horizon $M$ and choosing a weight $\omega$ close to one. 
Increasing the region of operation through increasing $\bar{\mathcal{J}}$ in the analysis does not necessarily increase the robustness margin. 
Specifically, a larger $\bar{\mathcal{J}}$ requires a larger horizon $N$, which will give a larger gain $c_{\mathrm{V}}$. 
Note that Assumption~\ref{ass:stable_feedback} only guarantees local exponential stability on the potentially small set $\mathbb{S}_{\mathrm{loc}}$. Consequently, there is a fundamental limit to the magnitude of uncertainty that can be handled.} }{}
\end{document}